\theoremstyle{plain}
\newtheorem{theorem}{Theorem}[section]
\newtheorem{lemma}[theorem]{Lemma}
\newtheorem{corollary}[theorem]{Corollary}
\newtheorem{definition}[theorem]{Definition}
\newtheorem{proposition}[theorem]{Proposition}
\theoremstyle{definition}
\newtheorem{remark}{Remark}
\def\1{\mathbf 1}
\def\Mod{\mathrm{Mod}}
\def\Aut{\mathrm{Aut}}
\def\C{\mathcal C}
\def\XX{\mathfrak X}
\def\CC{\mathfrak C}
\def\OO{\mathfrak O}
\def\BB{\mathfrak B}
\title{Exhausting curve complexes by finite rigid sets}
\author{Javier Aramayona and Christopher J. Leininger}
\thanks{The first author was supported by BQR (Toulouse) and Campus Iberus grants. The second author was supported by NSF grants DMS 0905748 and 1207183.}
\begin{document}

\begin{abstract}
Let $S$ be a connected orientable surface of finite topological type. We prove that  there is an exhaustion of the curve complex $\C(S)$ by a sequence of finite rigid sets.
\end{abstract}

\maketitle

\section{Introduction}
The curve complex $\C(S)$ of a surface $S$ is a simplicial complex whose $k$-simplices correspond to sets of $k+1$ distinct isotopy classes of essential simple closed curves  on $S$ with pairwise disjoint representatives. The extended mapping class group $\Mod^{\pm}(S)$ of $S$ acts on $\C(S)$ by simplicial automorphisms, and a  well-known theorem due to Ivanov \cite{Ivanov}, Korkmaz \cite{Korkmaz} and Luo \cite{Luo}, asserts that $\C(S)$ is {\em simplicially rigid} for $S\ne S_{1,2}$. More concretely, the natural homomorphism \[\Mod^{\pm}(S) \to \Aut(\C(S))\] is surjective unless $S=S_{1,2}$; in the case $S=S_{1,2}$ there is an automorphism of $\C(S)$ that sends a separating curve on $S$ to a non-separating one and thus cannot be induced by an element of $\Mod^{\pm}(S)$, see \cite{Luo}. 

In \cite{AL} we extended this picture and showed that curve complexes are {\em finitely rigid}. Specifically, for $S\ne S_{1,2}$ we identified a finite subcomplex $\XX(S) \subset \C(S)$ with the property that very locally injective map $\XX(S) \to \C(S)$ is the restriction of an element of $\Mod^{\pm}(S)$; in the case of $S_{1,2}$ a similar statement can be made, this time using the group $\Aut(\C(S))$ instead of $\Mod^{\pm}(S)$. We refer to such a subset $\XX(S)$  as a {\em rigid} set.

The rigid sets  constructed in \cite{AL} enjoy some curious properties. For instance, if $S=S_{0,n}$ is a sphere with $n$ punctures then $\XX(S)$ is a homeomorphic to an $(n-4)$-dimensional sphere. Since $\C(S)$ has dimension $n-4$, it follows that $\XX(S)$ represents a non-trivial element of $H_{n-4}(\C(S), \mathbb Z)$ which, by a result of Harer \cite{Harer}, is the only non-trivial homology group of $\C(S)$. In fact,  Broaddus \cite{Broaddus} and Birman-Broaddus-Menasco \cite{BBM} have recently proved $\XX(S)$ is a generator of  $H_{n-4}(\C(S), \mathbb Z)$, when viewed as a $\Mod^{\pm}(S)$-module; in the case when $S$ has  genus $\ge 2$ and at least one puncture, they prove that $\XX(S)$ {\em contains} a generator for the homology of $\C(S)$.

The rigid sets identified in \cite{AL} all have diameter 2 in $\C(S)$, and a natural question is whether there exist finite rigid sets in $\C(S)$ of arbitrarily large diameter;  see Question 1 of \cite{AL}.  In this paper we  prove that, in fact,  there exists an exhaustion of $\C(S)$ by finite rigid sets: 

\begin{theorem}
Let $S\ne S_{1,2}$ be a connected orientable surface of finite topological type. There exists a sequence  $\XX_1 \subset \XX_2 \subset \ldots \subset \C(S)$ 
such that: 
\begin{enumerate}
\item  $\XX_i$ is a finite rigid set for all $i\ge 1$, 
\item $\XX_i$ has trivial pointwise stabilizer in $\Mod^{\pm}(S)$, for all $i\ge 1$, and
\item  $\bigcup_{i\ge 1} \XX_i = \C(S)$.
\end{enumerate}
\label{main}
\end{theorem}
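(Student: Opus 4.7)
The plan is to construct the sequence inductively, starting from the finite rigid set $\XX(S)$ from \cite{AL}. Enumerate the vertices of $\C(S)$ as $v_1, v_2, \ldots$ and set $\XX_0 := \XX(S)$, enlarged if necessary so that its pointwise stabilizer is trivial. This enlargement is harmless: once a finite set fills $S$, its stabilizer in $\Mod^{\pm}(S)$ is finite, and adjoining finitely many more curves kills this finite group. I will construct each $\XX_i \supseteq \XX_{i-1} \cup \{v_i\}$ finite, rigid, and with trivial pointwise stabilizer; regarding $\XX_i$ as the full subcomplex on its vertex set, the union $\bigcup_i \XX_i$ then exhausts $\C(S)$.

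The main work is an \emph{extension lemma}: given any finite rigid set $Y \subset \C(S)$ with trivial pointwise stabilizer and any vertex $v \in \C(S)$, there exists a finite rigid set $Y' \supset Y \cup \{v\}$ with trivial pointwise stabilizer. To prove this I adjoin to $Y \cup \{v\}$ a finite \emph{distinguishing neighborhood} $N(v) \subset \C(S)$ --- a collection of curves containing $v$ with the property that $v$ is the unique vertex of $\C(S)$ whose disjointness pattern with $N(v) \setminus \{v\}$ agrees with that of $v$. Such a collection can be built from a filling system of curves through $v$: start with a pants decomposition containing $v$, add transversals to each pants curve, and augment further until disjointness alone pins down $v$. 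The construction is then iterated for the curves of $N(v)$ that lie outside $Y$, arranging the dependencies so that each new curve has its distinguishing set among earlier-handled curves; this terminates because everything is finite.

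Given such $Y'$, rigidity is verified as follows. Let $\phi\co Y' \to \C(S)$ be a locally injective simplicial map. Its restriction to $Y$ equals the restriction of some $f \in \Mod^{\pm}(S)$ by rigidity of $Y$, and $f$ is uniquely determined by the trivial pointwise stabilizer. Consider $\psi := f^{-1}\circ \phi$, which fixes $Y$ pointwise. For each new vertex $u \in Y' \setminus Y$, $\psi(u)$ has the same disjointness pattern with $Y$ (and, inductively, with any already-fixed curves of $N(u)$) as $u$, so the distinguishing property forces $\psi(u) = u$. Hence $\psi$ is the identity on $Y'$, i.e., $\phi$ agrees with $f$ on $Y'$, as desired. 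Maintaining triviality of the pointwise stabilizer at each stage poses no difficulty, since $Y'$ properly contains the already trivially-stabilized $Y$.

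The main obstacle is proving the existence of distinguishing neighborhoods in $\C(S)$. Classically a curve is determined by its intersection numbers with a filling system, but in $\C(S)$ we only have access to disjointness, and reconstructing $v$ from a disjointness pattern alone requires a more elaborate configuration. The essential case analysis will take place in small subsurfaces ($4$-holed spheres, $1$-holed tori, and the like) where the interplay between disjointness, filling, and the local Farey-graph structure can be leveraged to isolate any individual curve. I expect this combinatorial construction to be where most of the technical work of the paper resides.
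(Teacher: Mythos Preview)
There is a real gap in your extension lemma. To pin down a new curve $u$ you need its distinguishing set $N(u)\setminus\{u\}$ already fixed by $\psi$, so the \emph{first} new curve $u_1$ you process must have $N(u_1)\setminus\{u_1\}\subset Y$; in the language of Lemma~\ref{determine}, such a $u_1$ is uniquely determined by an almost-filling subset of $Y$, i.e.\ $u_1\in Y'$. But Proposition~\ref{onecurve} shows that for $S=S_{0,n}$ with $Y=\XX(S)$ the set $\XX\cup\{\alpha\}$ is never rigid for $\alpha\notin\XX$, and combined with Lemma~\ref{determine} this forces $\XX'=\XX$: no curve outside $\XX$ is uniquely determined by curves in $\XX$. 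Thus your recursion cannot even get started from this $Y$, and the same obstruction blocks your proposed ``harmless'' enlargement of $\XX$ to kill its order-two pointwise stabilizer. The claim ``this terminates because everything is finite'' does not address the problem: each new distinguishing neighborhood may introduce further new curves, and you give no argument that the resulting dependency graph is well-founded over $Y$. (A secondary issue: locally injective simplicial maps preserve adjacency but not non-adjacency, so only the disjoint half of your ``disjointness pattern'' is a priori carried by $\psi$; recovering intersection information requires the detectability machinery of Definition~\ref{D:detectable}.)

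The paper's route is quite different. Rather than adjoining individual curves, it exploits that any $\Mod^\pm(S)$-translate of a rigid set is rigid, and that two rigid sets with weakly rigid intersection glue to a rigid set (Lemma~\ref{L:glue}). The bulk of the work is to engineer, case by case in genus $0$, $1$, and $\geq 2$, an initial finite rigid $Y\supset\XX$ with trivial stabilizer such that $Y\cap f_\alpha(Y)$ is weakly rigid for every $\alpha$ in a finite set $G$ whose associated twists and half-twists generate $\Mod(S)$; this uses closed strings of twistable Farey neighbors together with a controlled amount of the prime operation $Y\mapsto Y'$. Once such a $Y$ is in hand, the exhaustion is obtained by iterating $Y_{n+1}=Y_n\cup f_G(Y_n)$ via Proposition~\ref{P:finish}.
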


\begin{remark}
A similar statement can be made for $S= S_{1,2}$, by replacing $\Mod^{\pm}(S)$ by $\Aut(\C(S))$ in the definition of rigid set above. 
\end{remark}

 \begin{remark}
We stress that Theorem \ref{main} above does not follow from the main result in \cite{AL}. Indeed, a subset of $\C(S)$ containing a rigid set need not be itself rigid; compare with Proposition \ref{onecurve} below. 
\end{remark}

As a consequence of Theorem  \ref{main} we will obtain a ``finitistic" proof of the aforementioned result of Ivanov-Korkmaz-Luo \cite{Ivanov,Korkmaz,Luo} on the simplicial rigidity of the curve complex. In fact, we will deduce the following stronger form due to Shackleton \cite{Shackleton}:

\begin{corollary}\label{C:Ivanov}
Let $S\ne S_{1,2}$ be a connected orientable surface of finite topological type. If $\phi:\C(S) \to \C(S)$ is a locally injective simplicial map, then there exists $h\in \Mod^{\pm}(S)$ such that $h = \phi$. 
\end{corollary}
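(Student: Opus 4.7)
The plan is to use Theorem \ref{main} in an essentially mechanical way. Fix an exhaustion $\XX_1 \subset \XX_2 \subset \ldots \subset \C(S)$ by finite rigid sets with trivial pointwise stabilizer in $\Mod^\pm(S)$, as supplied by that theorem. Let $\phi\co \C(S) \to \C(S)$ be a locally injective simplicial map. The first step is to observe that the restriction $\phi|_{\XX_i}\co \XX_i \to \C(S)$ is again locally injective. Since $\XX_i$ is a subcomplex of $\C(S)$, the link of any vertex $v \in \XX_i$ taken in $\XX_i$ is contained in its link taken in $\C(S)$, and $\phi$ is by hypothesis injective on the latter. Hence the rigidity of $\XX_i$ applies and yields an element $h_i \in \Mod^\pm(S)$ with $h_i|_{\XX_i} = \phi|_{\XX_i}$.

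The second step is to promote this sequence of ``local'' witnesses to a single global one. For every pair $i \le j$, both $h_i$ and $h_j$ agree with $\phi$ on $\XX_i \subset \XX_j$, so the element $h_j^{-1} h_i \in \Mod^\pm(S)$ fixes $\XX_i$ pointwise. By property (2) of Theorem \ref{main}, the pointwise stabilizer of $\XX_i$ is trivial, so $h_j = h_i$. In particular, $h_i = h_1$ for every $i \ge 1$. Set $h := h_1$; then $h|_{\XX_i} = \phi|_{\XX_i}$ for all $i$, and since $\bigcup_i \XX_i = \C(S)$ by property (3), we conclude that $h = \phi$ on all of $\C(S)$.

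There is no real obstacle once Theorem \ref{main} is in hand; the only delicate issue is the preservation of local injectivity under restriction to a subcomplex, and that is routine. It is worth emphasizing that both the rigidity of each $\XX_i$ and the triviality of its pointwise stabilizer are genuinely used: the former produces each $h_i$, while the latter is what ties the $h_i$ together into a single mapping class. Thus the true content of the corollary is entirely located in Theorem \ref{main}, and the deduction above is really just an exercise in compatibility.
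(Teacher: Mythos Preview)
Your proof is correct and follows essentially the same approach as the paper: restrict $\phi$ to each $\XX_i$, use rigidity to obtain $h_i$, and then use the trivial pointwise stabilizer of $\XX_i$ to force all the $h_i$ to coincide. The paper packages the last step by citing Lemma~\ref{L:glue}, whereas you argue it directly via $h_j^{-1}h_i$ fixing $\XX_i$ pointwise, but the content is identical.
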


The first author and Souto \cite{AS} proved that if $\XX\subset \C(S)$ is a rigid set satisfying some extra conditions, then every (weakly) injective homomorphism from the right-angled Artin group $\mathbb A(\XX)$ into $\Mod^{\pm}(S)$ is obtained, up to conjugation, by taking powers of roots of Dehn twists in the vertices of $\XX$. Since the finite rigid sets $\XX_i$ of Theorem \ref{main} all satisfy the conditions of \cite{AS}, we obtain the following result; here, $T_\gamma$ denotes the Dehn twist about $\gamma$:

\begin{corollary}
Let $S\ne S_{1,2}$ be a connected orientable surface of finite topological type, and consider the sequence  $\XX_1 \subset \XX_2 \subset \ldots \subset \C(S)$ of finite rigid sets given by Theorem \ref{main}. If  $\rho_i:\mathbb A(\XX_i) \to \Mod^{\pm}(S)$ is an injective homomorphism, then there exist functions $a,b:\C^{(0)}(S)\to\ \mathbb Z \setminus\{0\}$ and $f_i \in \Mod^{\pm} (S)$ such that  \[\rho_i(\gamma^{a(\gamma)}) =  f_iT_\gamma^{b(\gamma)}f_i^{-1},\] for every vertex $\gamma$ of $\XX_i$. 
\end{corollary}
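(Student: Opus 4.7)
The proof is essentially a direct appeal to the main theorem of \cite{AS}, and the plan is simply to verify its hypotheses for each $\XX_i$ and invoke it. Let me outline the structure.

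First, I would recall the relevant result from \cite{AS}: there is a list of combinatorial conditions on a finite rigid set $\XX \subset \C(S)$ (having to do with the intersection/adjacency pattern of its vertices, and in particular guaranteeing that the centralizer structure in $\Mod^{\pm}(S)$ of a collection of Dehn twists along the vertices of $\XX$ faithfully reflects the commutation relations in $\mathbb{A}(\XX)$) under which every injective homomorphism $\rho : \mathbb A(\XX) \to \Mod^{\pm}(S)$ is, up to conjugation by some $f \in \Mod^{\pm}(S)$, of the form $\gamma^{a(\gamma)} \mapsto T_\gamma^{b(\gamma)}$ for suitable integer-valued functions $a, b$ on the vertex set of $\XX$. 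This is precisely the conclusion we want; the task is only to produce, for each $i$, the element $f_i$ as an application of this theorem.

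Second, I would verify that every $\XX_i$ satisfies the hypotheses of \cite{AS}. Property (1) of Theorem \ref{main} gives rigidity, and property (2) gives trivial pointwise stabilizer, which are among the AS hypotheses. The remaining conditions in \cite{AS} are combinatorial and, crucially, monotone under taking supersets inside $\C(S)$: once the base rigid set $\XX(S)$ from \cite{AL} contains the requisite curve configurations (pairs of vertices filling appropriate subsurfaces, a sufficient supply of disjoint and intersecting pairs, etc.), any $\XX_i \supseteq \XX(S)$ continues to contain them. Since the exhaustion in Theorem \ref{main} can be taken to satisfy $\XX(S) \subseteq \XX_1 \subseteq \XX_2 \subseteq \cdots$, the AS hypotheses propagate through the sequence.

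Third, I would apply the \cite{AS} theorem to each $\rho_i$ to produce $f_i \in \Mod^{\pm}(S)$ and integer-valued functions $a, b$ such that $\rho_i(\gamma^{a(\gamma)}) = f_i T_\gamma^{b(\gamma)} f_i^{-1}$ for every vertex $\gamma$ of $\XX_i$. The only point deserving a brief comment is that the functions $a$ and $b$ can be chosen independently of $i$: indeed, the restriction of $\rho_{i+1}$ to $\mathbb A(\XX_i) \subset \mathbb A(\XX_{i+1})$ is an injective homomorphism, so applying the theorem to each $i$ and using uniqueness of roots of pseudo-Anosov/Dehn-twist powers in $\Mod^{\pm}(S)$ shows that the values of $a, b$ on a given vertex $\gamma$ depend only on $\rho_i(\gamma)$, not on the ambient $\XX_i$.

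The main obstacle, and really the only substantive work, is the bookkeeping in step two: precisely matching the combinatorial hypotheses imposed in \cite{AS} against the construction of the exhausting sequence in Theorem \ref{main}. Once this verification is in hand, the corollary is immediate.
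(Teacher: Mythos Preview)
Your approach matches the paper's: the corollary is not given a separate proof there, but is simply stated as an immediate consequence of the main result of \cite{AS} together with the assertion that the sets $\XX_i$ from Theorem~\ref{main} satisfy the hypotheses of \cite{AS}. Your steps (1)--(2) and the first sentence of step (3) are exactly this.

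One small correction: your third paragraph misreads the statement. The corollary does \emph{not} assume any compatibility among the $\rho_i$ (there is no hypothesis that $\rho_{i+1}$ restricts to $\rho_i$ on $\mathbb A(\XX_i)$), so there is nothing to say about choosing $a,b$ ``independently of $i$''. The statement should be read as: for each fixed $i$ and each injective $\rho_i$, there exist $a,b,f_i$ depending on $\rho_i$; the domain $\C^{(0)}(S)$ for $a,b$ is just notational uniformity, since only the values on vertices of $\XX_i$ matter. Simply drop that discussion and the proof is complete.
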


We remark that Kim-Koberda \cite{KK1} had previously shown the existence of injective homomorphisms $\mathbb A(Y_i) \to \Mod^{\pm}(S)$ for  sequences $Y_1 \subset Y_2 \ldots $ of subsets of $\C(S)$. Such homomorphisms may in fact be obtained by sending a generator of $\mathbb A(Y_i)$ to a sufficiently high power of a Dehn {\em multi-twist}, see \cite{KK2}. 

\medskip

\noindent{\bf Plan of the paper.} In Section 2 we recall some necessary definitions and basic results from our previous paper \cite{AL}. Section 3 deals with the problem of enlarging a rigid set in a way that remains rigid. As was the case in \cite{AL}, the techniques used in the proof of our main result differ depending on the genus of $S$. As a result, we will prove Theorem \ref{main} for surfaces of genus $0$, $g\ge 2$, and 1, in Sections  4, 5 and 6, respectively.

\medskip

\noindent{\bf Acknowledgements} We thank Brian Bowditch for conversations and for his continued interest in this work. We also thank Juan Souto for conversations. Parts of this paper were completed during the conference ``Mapping class groups and Teichm\"uller Theory"; we would like to  express our gratitude to the Michah Sageev and the Technion for their hospitality and financial support.

\section{Definitions}

Let $S=S_{g,n}$ be an orientable surface of genus $g$ with $n$ punctures and/or marked points.  We define the {\em complexity} of $S$ as $\xi(S) =3g-3+n$. We say that a simple closed curve on $S$ is {\em essential} if it does not bound a disk or a once-punctured disk on $S$.  An {\em essential subsurface} of $S$ is a properly embedded subsurface $N \subset S$ for which each boundary component is an essential curve in $S$.

The {\em curve complex} $\C(S)$ of $S$ is a simplicial complex whose $k$-simplices correspond to sets of $k+1$ isotopy classes of essential simple closed curves on $S$ with pairwise disjoint representatives.   In order to simplify the notation, a set of isotopy classes of simple closed curves will be confused with its representative curves, the corresponding vertices of $\C(S)$, and the subcomplex of $\C(S)$ spanned by the vertices.  We also assume that representatives of isotopy classes of curves and subsurfaces intersect {\em minimally} (that is, transversely and in the minimal number of components), and denote by $i(\alpha,\beta)$ their intersection number.

If $\xi(S) > 1$, then $\C(S)$ is a connected complex of dimension $\xi(S)-1$. If $\xi(S)\le 0$ and $S\ne S_{1,0}$, then $\C(S)$ is empty. If $\xi(S)=1$ or $S=S_{1,0}$, then $\C(S)$ is a countable set of vertices; in order to obtain a connected complex, we modify the definition of $\C(S)$ by declaring $\alpha, \beta\in \C^{(0)}(S)$ to be adjacent in $\C(S)$ whenever $i(\alpha,\beta)=1$ if $S=S_{1,1}$ or $S=S_{1,0}$, and whenever $i(\alpha,\beta)=2$ if $S=S_{0,4}$.  Furthermore, we add triangles to make $\C(S)$ into a flag complex.   In all three cases, the complex $\C(S)$ so obtained is isomorphic to the well-known {\em Farey complex}.

We recall some definitions and results from \cite{AL} that we will need later.

\begin{definition} [Detectable intersection] Let $S$ be a surface and $Y\subset \C(S)$ a subcomplex.  If $\alpha,\beta \in Y$ are curves with $i(\alpha,\beta) \neq 0$, then we say that their intersection is {\em $Y$--detectable} (or simply {\em detectable} if $Y$ is understood) if there are two pants decompositions $P_\alpha,P_\beta \subset Y$ such that
\begin{equation} \label{E:detectable} \alpha \in P_\alpha, \, \beta \in P_\beta, \mbox{ and } P_\alpha - \alpha = P_\beta - \beta. \end{equation}
\label{D:detectable}
\end{definition}
We note that if $\alpha,\beta$ have detectable intersection, then they must fill a $\xi=1$ (essential) subsurface, which we denote $ N(\alpha\cup\beta) \subset S$.   For notational purposes, we call $P = P_\alpha - \alpha = P_\beta - \beta$ a {\em pants decomposition of $S - N(\alpha\cup\beta)$}, even though it includes the boundary components of $N(\alpha\cup\beta)$. The following is Lemma 2.3 in \cite{AL}:

\begin{lemma} \label{L:detectable2detectable}
Let $Y \subset \C(S)$ be a subcomplex, and $\alpha,\beta \in Y$ intersecting curves with $Y$--detectable intersection.  If $\phi:Y \to \C(S)$ is a locally injective simplicial map, then $\phi(\alpha),\phi(\beta)$ have $\phi(Y)$--detectable intersection, and hence fill a $\xi=1$ subsurface.
\end{lemma}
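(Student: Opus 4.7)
The plan is to push the pants decompositions $P_\alpha$ and $P_\beta$ forward under $\phi$ and verify that they still witness detectability. The key starting observation is that a locally injective simplicial map is injective on every simplex of $Y$: for any simplex $\sigma$ and any vertex $v\in\sigma$, we have $\sigma\subset\mathrm{star}(v)$, so $\phi|_\sigma$ is injective. Applied to the maximal simplices $P_\alpha$ and $P_\beta$, this tells us that $\phi(P_\alpha)$ and $\phi(P_\beta)$ are simplices of $\C(S)$ of the same dimension as $P_\alpha$ and $P_\beta$, and hence are pants decompositions of $S$. Writing $P = P_\alpha - \alpha = P_\beta - \beta$, the same injectivity yields
\[
\phi(P_\alpha) - \phi(\alpha) \,=\, \phi(P) \,=\, \phi(P_\beta) - \phi(\beta).
\]

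Next I would verify that $\phi(\alpha)\neq\phi(\beta)$. Assuming $\xi(S)\geq 2$, the set $P$ is non-empty; choosing any $\gamma\in P$, both $\{\alpha,\gamma\}$ and $\{\beta,\gamma\}$ are edges of $Y$, so $\alpha$ and $\beta$ both lie in the star of $\gamma$, and local injectivity at $\gamma$ forces $\phi(\alpha)\neq\phi(\beta)$. (When $\xi(S)=1$ one has $P=\emptyset$; in that regime $\C(S)$ is a Farey complex and the argument reduces to using local injectivity at $\alpha$ or $\beta$ along an edge joining them.) I would then rule out that $\phi(\alpha)$ and $\phi(\beta)$ are disjoint: if they were, then $\phi(P)\cup\{\phi(\alpha),\phi(\beta)\}$ would be a simplex of $\C(S)$ properly containing the pants decomposition $\phi(P_\alpha)$, contradicting maximality. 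Hence $\phi(\alpha)$ and $\phi(\beta)$ intersect and must fill an essential subsurface $N(\phi(\alpha)\cup\phi(\beta))$ of complexity one.

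Putting the pieces together, $\phi(P_\alpha),\phi(P_\beta)\subset\phi(Y)$ together with the displayed equation provide exactly the data required to certify that the intersection of $\phi(\alpha)$ and $\phi(\beta)$ is $\phi(Y)$-detectable. The proof is essentially bookkeeping: the only slightly delicate point is the degenerate case $\xi(S)=1$, where $P$ is empty, but it is handled by the direct argument indicated above. In all other cases the lemma reduces to the two basic consequences of local injectivity (injectivity on each simplex, and distinction of distinct vertices sharing a common neighbor) together with the maximality of pants decompositions among simplices of $\C(S)$.
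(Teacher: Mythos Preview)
The paper does not actually prove this lemma: it is quoted verbatim as ``Lemma 2.3 in \cite{AL}'' and no argument is given here. Your proof is the standard one and is correct in the regime $\xi(S)\ge 2$, which is the only one used in the paper. The three ingredients you isolate---injectivity of $\phi$ on each simplex, the common neighbor $\gamma\in P$ forcing $\phi(\alpha)\neq\phi(\beta)$, and maximality of pants decompositions ruling out disjointness of $\phi(\alpha)$ and $\phi(\beta)$---are exactly what is needed, and your check that $\phi(\beta)\notin\phi(P)$ (via injectivity on $P_\beta$) closes the argument.

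One small caveat: your treatment of the degenerate case $\xi(S)=1$ is imprecise. You write that one uses ``local injectivity at $\alpha$ or $\beta$ along an edge joining them,'' but in the Farey complex two intersecting curves need not be adjacent (adjacency there means Farey neighbors, not merely nonzero intersection), so there may be no such edge. In that setting pants decompositions are single vertices rather than maximal simplices, and the detectability framework degenerates. This does not affect anything in the present paper, where the lemma is only invoked for surfaces with $\xi(S)\ge 2$, but you may want to either restrict the statement accordingly or give a separate argument for the Farey case.
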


\subsection{Farey neighbors} A large part of our arguments will rely on being able to recognize when two curves are {\em Farey neighbors}, as we now define: 

\begin{definition}[Farey neighbors]
Let  $\alpha$ and $\beta$ be curves on $S$ which fill a $\xi=1$ subsurface $N \subset S$.  We say $\alpha$ and $\beta$ are {\em Farey neighbors} if they are adjacent in $\C(N)$. 
\end{definition}

The following result is a useful tool for recognizing Farey neighbors, and is a rephrasing of  Lemma 2.4 in \cite{AL} (see also the comment immediately after it):

\begin{lemma} \label{l:fareydetect} Suppose $\alpha_1,\alpha_2,\alpha_3,\alpha_4$ are  curves on $S$ such that:
\begin{enumerate}
\item $\alpha_2,\alpha_3$ together fill a $\xi=1$ subsurface $N\subset S$
\item $i(\alpha_i,\alpha_j) = 0 \Leftrightarrow |i-j| > 1$ for all $i \neq j$. 
\item $\alpha_1$ and $\alpha_4$ have non-zero intersection number with exactly one component of $\partial N$.
\end{enumerate}
 Then $\alpha_2,\alpha_3$ are Farey neighbors.
\end{lemma}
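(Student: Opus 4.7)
Since $\alpha_2$ and $\alpha_3$ fill a $\xi=1$ subsurface $N$, we have $N = S_{1,1}$ or $N = S_{0,4}$, and $\C(N)$ is a Farey complex; the goal is to show that $\alpha_2$ and $\alpha_3$ are adjacent in $\C(N)$. The plan is to use $\alpha_1$ and $\alpha_4$ to produce essential arcs $a_1,a_4 \subset N$ that are dual to $\alpha_3$ and $\alpha_2$ respectively, and then to conclude Farey adjacency from the disjointness of these arcs.

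First I would show that $\alpha_1$ is not contained in $N$: condition (2) gives $i(\alpha_1,\alpha_4)=0$ but $i(\alpha_3,\alpha_4)\ne 0$, so $\alpha_1$ is not isotopic to $\alpha_3$, and any two distinct essential curves in a $\xi=1$ surface must intersect. Hence $\alpha_1$ crosses $\partial N$, and by condition (3) the trace $\alpha_1 \cap N$ is a non-empty union of essential arcs with both endpoints on a single component $\delta\subset\partial N$. Pick one such arc $a_1$; it is disjoint from $\alpha_3$. Analogously produce an essential arc $a_4\subset\alpha_4\cap N$, disjoint from $\alpha_2$, with both endpoints on a single component $\delta'$ of $\partial N$. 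Joining $a_1$ with either of the two subarcs of $\delta$ bounded by its endpoints gives a simple closed curve in $N$; since $a_1$ is essential, at least one of these is essential in $N$, and I call it $\gamma(a_1)$. Then $\gamma(a_1)$ is essential and disjoint from $\alpha_3$, so by the $\xi=1$ property $\gamma(a_1)$ is isotopic to $\alpha_3$; analogously, $\gamma(a_4)$ is isotopic to $\alpha_2$.

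Since $i(\alpha_1,\alpha_4)=0$, the arcs $a_1, a_4$ are disjoint. I would next rule out the possibility $\delta\ne\delta'$: if the two arcs had endpoints on different boundary components, small regular neighborhoods of $\delta\cup a_1$ and $\delta'\cup a_4$ could be chosen disjoint in $N$, forcing $\alpha_3$ and $\alpha_2$ (boundary curves of these neighborhoods) to be disjoint, contradicting that $\alpha_2,\alpha_3$ fill $N$. Hence $\delta=\delta'$. The concluding step is the combinatorial claim that two disjoint essential arcs in $N$ based at a common boundary component, with distinct dual curves, must have Farey-adjacent duals in $\C(N)$. For $N=S_{1,1}$ this is the classical identification of the arc complex based at $\delta$ with the curve complex, under which disjoint arcs correspond to Farey neighbors; for $N=S_{0,4}$ it can be verified by a short case analysis on the cyclic order of the endpoints of $a_1,a_4$ on $\delta$ together with a direct intersection-number computation verifying $i_N(\alpha_2,\alpha_3)=2$. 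This final combinatorial step, particularly the $S_{0,4}$ case, is where I expect the main difficulty to lie.
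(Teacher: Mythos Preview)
The paper does not give its own proof of this lemma; it is stated as a rephrasing of Lemma~2.4 of \cite{AL} and simply cited. So there is no in-paper argument to compare against directly, but your approach via arcs in $N$ is the natural one and is essentially how such statements are established.

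Your outline is correct. A few remarks. The opening paragraph showing $\alpha_1\not\subset N$ is more than you need: hypothesis (3) already gives $i(\alpha_1,\partial N)\neq 0$, which immediately forces $\alpha_1$ to cross $\partial N$. The reduction to $\delta=\delta'$ is clean and correct. For the final step, your instinct that the $S_{0,4}$ case carries the content is right, and the case analysis does go through without surprises. One efficient way to organize it: cut $N$ along $a_1$, obtaining a one-hole piece $P$ and a two-hole piece $Q$. If $a_4$ lay in $P$ then an essential $a_4$ would have to separate the single hole of $P$ from the $a_1$-side, forcing $\gamma(a_4)$ to enclose exactly the two holes of $Q$; the region between $\gamma(a_1)$ and $\gamma(a_4)$ is then a disk, so $\gamma(a_4)\simeq\gamma(a_1)$, contradicting $\alpha_2\neq\alpha_3$. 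Hence $a_4\subset Q$. Now realize $\gamma(a_1)$ as the frontier of $Q$ pushed slightly into $Q$, and observe that $\gamma(a_4)$ --- which closes $a_4$ via the arc of $\delta$ that passes through $P$ --- must exit and re-enter $Q$ exactly once each, crossing $\gamma(a_1)$ in two points. Since distinct essential curves in a four-holed sphere have even geometric intersection number at least $2$, this yields $i(\alpha_2,\alpha_3)=2$. The $S_{1,1}$ case is indeed handled by the classical bijection between essential arcs based at $\partial N$ and essential curves, under which disjointness of arcs corresponds to intersection number $1$.
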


\section{Enlarging rigid sets}
\label{s:3}

In this section we discuss the problem of enlarging rigid sets of the curve complex.  We recall the definition of rigid set from \cite{AL}: 

\begin{definition}[Rigid set] Suppose $S\ne S_{1,2}$. 
We say that  $Y \subset \C(S)$ is {\em rigid} if for every locally injective simplicial map $\phi:Y \to \C(S)$ there exists $h\in \Mod^{\pm}(S)$ with $h|_{Y} = \phi$,  unique up to the pointwise stabilizer of $Y$ in $\Mod^{\pm}(S)$.
\end{definition}

\begin{remark} The definition above may seem somewhat different to the one used in \cite{AL}, where we used the group $\Aut(\C(S))$ instead of $\Mod^{\pm}(S)$. Nevertheless, in the light of the results of Ivanov \cite{Ivanov}, Korkmaz \cite{Korkmaz} and Luo \cite{Luo} mentioned in the introduction, the two definitions are essentially the same as $S\ne S_{1,2}$. For $S=S_{1,2}$, however, we will use the group $\Aut(\C(S))$ instead of $\Mod^{\pm}(S)$, due to the existence of {\em non-geometric} automorphisms of $\C(S)$. 
\end{remark}

The main step in the proof of Theorem \ref{main} is to enlarge the rigid sets constructed in \cite{AL} in a way that the sets we obtain remain rigid. As we mentioned in the introduction, while one might be tempted to guess that a set that contains a rigid set is necessarily rigid, this is far from true, as the next result shows: 

\begin{proposition}
Let $S=S_{0,n}$, with $n\ge 5$, and $\XX$ the finite rigid set identified in \cite{AL} (defined in Section \ref{S:punctured spheres}). For every curve $\alpha\in \C(S) \setminus \XX$, the set $\XX \cup \{\alpha\}$
is not rigid. 
\label{onecurve}
\end{proposition}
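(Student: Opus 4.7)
The plan is to produce, for each $\alpha \in \C(S)\setminus\XX$, an explicit locally injective simplicial map $\phi\colon \XX\cup\{\alpha\}\to\C(S)$ that is not the restriction of any element of $\Mod^{\pm}(S)$. I would take $\phi|_\XX = \mathrm{id}_\XX$ and $\phi(\alpha)=\beta$ for a suitably chosen curve $\beta$. For $\phi$ to be simplicial and locally injective, $\beta$ must differ from $\alpha$, be disjoint in $\C(S)$ from every $\gamma\in\XX$ adjacent to $\alpha$, and differ from each such $\gamma$. Note that $\beta$ may or may not lie in $\XX$: in the former case $\phi$ simply collapses $\alpha$ onto an existing vertex of $\XX$; in the latter, it sends $\alpha$ to a parallel non-$\XX$ curve with the same link in $\XX$.

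Granting the existence of such a $\beta$, the map $\phi$ cannot be induced by a mapping class. Any extension $h\in\Mod^{\pm}(S)$ would satisfy $h|_\XX=\mathrm{id}_\XX$, hence lie in the pointwise $\Mod^{\pm}(S)$-stabilizer of $\XX$. This stabilizer is trivial for the rigid set $\XX$ to be described in Section \ref{S:punctured spheres}, because the curves of $\XX$ fill $S = S_{0,n}$ and their intersection/disjointness pattern determines the cyclic ordering of the punctures. Thus $h$ must be the identity, yielding $h(\alpha)=\alpha\neq\beta$, a contradiction.

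The combinatorial heart of the argument is therefore the construction of $\beta$ for each $\alpha \in \C(S) \setminus \XX$. The rigid set $\XX$ of Section \ref{S:punctured spheres} is built from curves associated to short chords of a fixed convex $n$-gon model of the punctures, and a curve $\alpha\notin\XX$ corresponds to a partition $\{A,A^c\}$ of the punctures that is not of the prescribed combinatorial type. The set of $\XX$-curves adjacent to $\alpha$ is then just those whose defining arc sits entirely inside $A$ or entirely inside $A^c$, and one exhibits $\beta$ concretely by choosing a short chord with defining arc contained in $A$ (or $A^c$) that is disjoint from all the chords in the $\XX$-link of $\alpha$ and still crosses $\alpha$; in the few remaining cases (typically $|A|=2$ with $A^c$ small) one takes $\beta$ outside $\XX$ but with the same link. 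The main obstacle is the uniformity of this combinatorial construction over all types of $\alpha$, which is handled by a finite case analysis using the definition of $\XX$ in Section \ref{S:punctured spheres}; the hypothesis $n\ge 5$ is precisely what guarantees sufficient room on the $n$-gon for such a $\beta$ to exist in every case.
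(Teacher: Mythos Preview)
Your overall strategy—fix $\XX$ pointwise and move $\alpha$ to some other curve $\beta$—is exactly the paper's. But the execution has real gaps.

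First, your stated conditions on $\beta$ are not sufficient for local injectivity. You only check injectivity on the star of $\alpha$; you must also check it on the star of each $\gamma\in\XX$ adjacent to $\alpha$. If $\delta\in\XX$ is disjoint from such a $\gamma$, then both $\alpha$ and $\delta$ lie in the star of $\gamma$, so you need $\phi(\alpha)=\beta\neq\delta=\phi(\delta)$. In particular, if $\beta\in\XX$ and (as you require) $\beta$ is disjoint from every $\gamma$ in the $\XX$--link of $\alpha$, then $\beta$ itself is such a $\delta$, and $\phi$ collapses $\alpha$ and $\beta$ on that star. Hence the option ``$\beta\in\XX$'' is never available, and your ``short chord'' construction cannot work. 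Second, a curve on $S_{0,n}$ is not determined by its puncture partition $\{A,A^c\}$: infinitely many isotopy classes induce each partition, so $\alpha\notin\XX$ need not correspond to a ``non-prescribed'' partition, and your case analysis is built on a faulty parametrisation. Third (minor), the pointwise stabiliser of $\XX$ in $\Mod^\pm(S)$ is not trivial but has order two, generated by the reflection $i$ swapping the two copies of the $n$-gon; one therefore also needs $\beta\neq i(\alpha)$.

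The paper avoids all of this with a uniform construction. It observes that the curves of $\XX$ disjoint from $\alpha$ fill a proper subsurface $S_\alpha$, whose complementary component $S_\alpha'\ni\alpha$ has every essential boundary curve already in $\XX$ (this uses the combinatorics of $\XX$). One then takes any $f\in\Mod(S)$ that is pseudo-Anosov on $S_\alpha'$ and the identity elsewhere, and sets $\beta=f(\alpha)$. This $\beta$ is automatically disjoint from every $\gamma\in\XX$ adjacent to $\alpha$, lies outside $\XX$, and (replacing $f$ by a power if necessary) avoids $\alpha$ and $i(\alpha)$; no case analysis is needed.
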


\begin{proof}
Let $S_{\alpha}$ be the smallest subsurface of $S$ containing those curves in $\XX$ which are disjoint from $\alpha$, and $S_\alpha'$ the connected component of $S\setminus S_\alpha$ that contains $\alpha$; from the construction in \cite{AL}, every component of $\partial S_\alpha'$ which is essential in $S$ is an element of $\XX$.  Consider a mapping class  $f\in \Mod(S)$ that is pseudo-Anosov on $S'_\alpha$ and the identity on $S_\alpha$. Define a map $\phi:\XX\cup \{\alpha\}\to \C(S)$ by 
$\phi(\beta) = \beta$ for all $\beta \ne \alpha$, and $\phi(\alpha) = f(\alpha)$. By construction, the map $\phi$ is locally injective and simplicial, but cannot be the restriction
of an element of $\Mod^{\pm}(S)$.
\end{proof}

While Proposition \ref{onecurve} serves to highlight the obstacles for enlarging a rigid set to a set that is also rigid, we now explain two procedures for doing so. First, we recall the following definition from \cite{AL}:

\begin{definition}
Let $A$ be a set of curves in $S$.
\begin{enumerate}
\item $A$ is {\em almost filling (in $S$)} if the set
\[ B = \{ \beta \in \C^{(0)}(S) \setminus A \mid i(\alpha,\beta) = 0 \, \forall \alpha \in A \} \]
is finite.  In this case, we call $B$ the {\em set of curves determined by $A$}.
\item If $A$ is almost filling (in $S$), and $B = \{ \beta \}$ is a single curve, then we say that $\beta$ is uniquely determined by $A$.
\end{enumerate}
\end{definition}

An immediate consequence of the definition is the following.

\begin{lemma}
Let $Y$ be a rigid set of curves, and $A \subset Y$ an almost filling in $S$. If $\beta$ is uniquely determined by $A$, then $Y \cup \{ \beta \}$ is rigid.
\label{determine}
\end{lemma}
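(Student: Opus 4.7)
My plan is to pull the extension across by using the rigidity of $Y$ first, and then forcing $\phi(\beta)$ to coincide with $h(\beta)$ via the uniqueness property of $\beta$.

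Given a locally injective simplicial map $\phi\co Y\cup\{\beta\}\to \C(S)$, I would first restrict to $Y$. Since $Y$ is rigid, there exists $h\in\Mod^{\pm}(S)$ such that $h|_Y=\phi|_Y$. It now suffices to show $h(\beta)=\phi(\beta)$, for then $h$ realizes $\phi$ on all of $Y\cup\{\beta\}$.

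Next I would transport the almost-filling data by $h$. Because elements of $\Mod^{\pm}(S)$ preserve intersection numbers, $h(A)\subset h(Y)=\phi(A)$ is almost filling in $S$, and the unique curve disjoint from every element of $h(A)$ (and not in $h(A)$) is $h(\beta)$. Now I would show $\phi(\beta)$ is a candidate for this unique curve. For each $\alpha\in A$ we have $i(\alpha,\beta)=0$, so $\alpha$ and $\beta$ span an edge in $Y\cup\{\beta\}$; simpliciality of $\phi$ gives $i(\phi(\alpha),\phi(\beta))=0$, and local injectivity on the star of $\beta$ forces $\phi(\beta)\neq\phi(\alpha)=h(\alpha)$. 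Hence $\phi(\beta)$ is disjoint from every element of $h(A)$ and lies outside $h(A)$.

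By the uniqueness of $\beta$ as the curve determined by $A$ (and therefore of $h(\beta)$ as the curve determined by $h(A)$), we conclude $\phi(\beta)=h(\beta)$, completing the argument.

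The only delicate point is ensuring $\phi(\beta)\notin h(A)=\phi(A)$, which is exactly why we need local injectivity rather than just simpliciality; this is the one place in the argument where I would be careful, but it follows immediately from the fact that $\beta$ is adjacent in $Y\cup\{\beta\}$ to every $\alpha\in A$.
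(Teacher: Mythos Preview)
Your proof is correct and follows essentially the same approach as the paper's: use rigidity of $Y$ to obtain $h$, then argue that $\phi(\beta)$ must equal $h(\beta)$ because both are the unique curve determined by $h(A)=\phi(A)$, with local injectivity on the star of $\beta$ ensuring $\phi(\beta)\notin\phi(A)$. (One small slip: you wrote ``$h(A)\subset h(Y)=\phi(A)$'' where you presumably meant $h(A)=\phi(A)$.)
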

\begin{proof}
Given any locally injective simplicial map $\phi \colon Y \cup \{ \beta \} \to \C(S)$, we let $f \in \Mod^{\pm}(\C(S))$ be such that $f|_Y = \phi$.  Then $f(\beta)$ is the unique curve determined by $f(A) = \phi(A)$.  On the other hand, $\phi(\beta)$ is connected by an edge to every vertex in $\phi(A)$, since $\phi$ is simplicial.  Since $\phi$ is injective on the star of $\beta$, it is injective on $\beta \cup A$, and so $\phi(\beta) \not \in A$.  It follows that $\phi(\beta)$ is the curve uniquely determined by $\phi(A)$, and hence $f(\beta) = \phi(\beta)$.
\end{proof}

In particular, this gives rise to one method for enlarging a  rigid set which we formalize as follows.
Given a subset $Y \subset \C(S)$ define
\[ Y' = Y \cup \{ \beta \mid \beta \mbox{ is uniquely determined by some almost filling set } A \subset Y \}. \]
From this we recursively define $Y^{r} = (Y^{r-1})'$ for all $r > 0$ where $Y = Y^{0}$. Observe that, as an immediate consequence of Lemma \ref{determine}, we obtain:
\begin{proposition}
If $Y \subset \C(S)$ is a rigid set, then so is $Y^r$ for all $r \geq 0$.
\label{p:prime}
\end{proposition}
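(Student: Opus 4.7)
The proof proposal is a straightforward induction on $r$, with Lemma \ref{determine} doing essentially all of the work at each step.

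The base case $r=0$ is the hypothesis that $Y$ is rigid. For the inductive step, assume $Y^{r-1}$ is rigid; I will show $Y^{r} = (Y^{r-1})'$ is rigid. Let $\phi : Y^{r} \to \C(S)$ be a locally injective simplicial map. Since $Y^{r-1} \subset Y^{r}$ and every star in $Y^{r-1}$ is contained in the corresponding star in $Y^{r}$, the restriction $\phi|_{Y^{r-1}}$ is still locally injective and simplicial. By the inductive hypothesis, there exists $h \in \Mod^{\pm}(S)$ with $h|_{Y^{r-1}} = \phi|_{Y^{r-1}}$.

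It remains to verify that $h(\beta) = \phi(\beta)$ for each $\beta \in Y^{r} \setminus Y^{r-1}$. By definition of $(Y^{r-1})'$, any such $\beta$ is the unique curve determined by some almost filling set $A \subset Y^{r-1}$. I plan to apply the argument of Lemma \ref{determine} to the restriction of $\phi$ to $Y^{r-1} \cup \{\beta\}$ (which is again locally injective and simplicial for the same reason as above). Since $h$ is induced by a homeomorphism of $S$, it preserves intersection numbers, so $h(A) = \phi(A)$ almost fills $S$ and $h(\beta)$ is the unique curve determined by $\phi(A)$. On the other hand, $\phi(\beta)$ is disjoint from every element of $\phi(A)$ (by simpliciality of $\phi$), and local injectivity of $\phi$ on the star of $\beta$, which contains $A$, forces $\phi(\beta) \notin \phi(A)$. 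Hence $\phi(\beta)$ also lies in the set determined by $\phi(A)$, which is the singleton $\{h(\beta)\}$; so $\phi(\beta) = h(\beta)$.

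This shows $h|_{Y^{r}} = \phi$. Uniqueness of $h$ up to the pointwise stabilizer of $Y^{r}$ is automatic: any two such $h$ differ by a mapping class fixing every vertex of $Y^{r}$. There is no substantive obstacle, as the only quantifier issue — namely, that $Y^r \setminus Y^{r-1}$ may be infinite and the new curves must all be pinned down simultaneously — is resolved by first fixing the global mapping class $h$ using rigidity of $Y^{r-1}$, and only afterwards checking agreement with $\phi$ vertex by vertex on the newly added curves.
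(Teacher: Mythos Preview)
Your proposal is correct and follows exactly the approach the paper intends: the paper states the proposition as ``an immediate consequence of Lemma~\ref{determine}'' without further details, and you have simply spelled out that consequence carefully --- induct on $r$, use rigidity of $Y^{r-1}$ to pin down $h$, then apply the argument of Lemma~\ref{determine} curve-by-curve to the newly added vertices.
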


Next, we give a sufficient condition for the union of two  rigid sets to be rigid. Before doing so, we need the following definition: 

\begin{definition}[Weakly rigid set]
We say that a set $Y\subset \C(S)$ is {\em weakly rigid} if , whenever $h,h'\in \Mod^{\pm}(S)$ satisfy $h|_Y = h'|_Y$, then $h = h'$.
\end{definition}

Alternatively, $Y$ is weakly rigid if the pointwise stabilizer in $\Mod^\pm(S)$ is trivial.  Note that if $Y$ is a weakly rigid set, then so is every set containing $Y$.

\begin{lemma}
Let $Y_1, Y_2 \subset \C(S)$ be rigid sets.  If $Y_1\cap Y_2$ is weakly rigid then $Y_1\cup Y_2$ is rigid. 
\label{L:glue}
\end{lemma}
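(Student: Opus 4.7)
The plan is to take a locally injective simplicial map $\phi\co Y_1 \cup Y_2 \to \C(S)$, restrict it to each $Y_i$, invoke rigidity of each $Y_i$ to produce mapping classes $h_1, h_2 \in \Mod^\pm(S)$ extending the restrictions, and then use weak rigidity of $Y_1 \cap Y_2$ to show $h_1 = h_2$.

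First I would verify that, for $i = 1,2$, the restriction $\phi|_{Y_i}\co Y_i \to \C(S)$ is itself locally injective and simplicial. Simpliciality is obvious from the definition, and local injectivity passes to subcomplexes since the star of any vertex $v \in Y_i$ inside $Y_i$ is contained in the star of $v$ inside $Y_1 \cup Y_2$, on which $\phi$ is assumed injective. Having done this, the hypothesis that each $Y_i$ is rigid supplies mapping classes $h_1, h_2 \in \Mod^\pm(S)$ with $h_i|_{Y_i} = \phi|_{Y_i}$.

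Next I would compare $h_1$ and $h_2$ on the overlap. For every $\gamma \in Y_1 \cap Y_2$,
\[
h_1(\gamma) \;=\; \phi(\gamma) \;=\; h_2(\gamma),
\]
so $h_1$ and $h_2$ agree pointwise on $Y_1 \cap Y_2$. By hypothesis $Y_1 \cap Y_2$ is weakly rigid, i.e.\ has trivial pointwise stabilizer in $\Mod^\pm(S)$, which is equivalent to saying that two mapping classes agreeing on $Y_1 \cap Y_2$ must coincide. Hence $h_1 = h_2$, and this common element $h := h_1 = h_2$ satisfies $h|_{Y_1 \cup Y_2} = \phi$, as required. The uniqueness-up-to-pointwise-stabilizer clause in the definition of rigidity for $Y_1 \cup Y_2$ follows because the pointwise stabilizer of $Y_1 \cup Y_2$ is contained in that of $Y_1 \cap Y_2$, which is trivial.

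There is no real obstacle here; the proof is essentially a formal consequence of the definitions once one verifies that restrictions of locally injective simplicial maps remain locally injective. The only point worth flagging is that the weak rigidity assumption is used precisely to upgrade ``agreement on $Y_1 \cap Y_2$'' to ``equality as mapping classes,'' which is what prevents the two extensions $h_1, h_2$ from disagreeing on, say, a subsurface of $S$ that meets neither $Y_1$ nor $Y_2$ faithfully.
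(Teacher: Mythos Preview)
Your proof is correct and follows essentially the same approach as the paper's: restrict $\phi$ to each $Y_i$, invoke rigidity to obtain $h_i$, and use weak rigidity of $Y_1\cap Y_2$ to force $h_1=h_2$. You give a bit more detail than the paper does in checking that the restrictions $\phi|_{Y_i}$ remain locally injective and in addressing the uniqueness clause, but the argument is the same.
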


\begin{proof}
Let $\phi:Y_1\cup Y_2\to \C(S)$ be a locally injective simplicial map. Since $Y_i$ is rigid and has trivial pointwise stabilizer in $\Mod^{\pm}(S)$ (because $Y_1 \cap Y_2$ does), there exists a unique
$h_i\in\Mod^{\pm}(S)$ such that $h_i|_{Y_i} = \phi|_{Y_i}$. Finally, since $Y_1\cap Y_2$ is weakly rigid we have $h_1 = h_2=h$. Therefore $h|_{Y_1\cup Y_2} = \phi$, and the result follows. 
\end{proof}

We now proceed to describe our second method for enlarging a rigid set. We start with some definitions and notation. We write $T_\alpha$ for the Dehn  twist along a curve $\alpha$. Recall that the half twist $H_\alpha$ about a curve $\alpha$ is defined if and only if the curve cuts off a pair of pants containing two punctures of $S$.  Furthermore, there is exactly one half twist about $\alpha$ if in addition $S$ is not a four-holed sphere.
\begin{definition}
Say that Farey neighbors $\alpha,\beta$ are {\em twistable} if either:
\begin{itemize}
\item $N(\alpha \cup \beta)$ is a one-holed torus or
\item $N(\alpha \cup \beta)$ is a four-holed sphere and $H_\alpha,H_\beta$ are both defined and unique.
\end{itemize}
In this situation we define $f_\alpha = T_\alpha$ and $f_\beta= T_\beta$ in the first case and $f_\alpha = H_\alpha$ and $f_\beta = H_\beta$ in the second.  We call $f_\alpha,f_\beta$ the {\em twisting pair} for $\alpha,\beta$.

In case (1) we call $\alpha,\beta$ {\em toroidal} and in case (2) we call them {\em spherical}.
\end{definition}
We note that whether twistable Farey neighbors $\alpha,\beta$ are toroidal or spherical can be distinguished (i) by $i(\alpha,\beta)$ (whether it is $1$ or $2$), (ii) by the homeomorphism types of $\alpha$ and $\beta$ (whether they are  nonseparating curves or they cut off a pair of pants), or (iii) by the homeomorphism type of $N(\alpha \cup \beta)$ (whether it is a one-holed torus or a four-holed sphere).

The following well-known fact describes the common feature of these two situations.
\begin{proposition} \label{P:twistable_Farey_neighbors}
Suppose $\alpha,\beta$ are twistable Farey neighbors and that $f_\alpha,f_\beta$ is their twisting pair.  Then
\[ f_\alpha(\beta) = f_\beta^{-1}(\alpha) \mbox{ and } f_\alpha^{-1}(\beta) = f_\beta(\alpha), \]
and these are the unique common Farey neighbors of both $\alpha$ and $\beta$.
\end{proposition}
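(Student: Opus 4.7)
The plan is to reduce everything to a computation in the Farey complex. Since $\alpha,\beta$ are twistable Farey neighbors, the subsurface $N := N(\alpha\cup\beta)$ has complexity one, so $\C(N)$ is the Farey complex; the twisting pair $f_\alpha,f_\beta$ is supported in $N$ and acts on $\C(N)$ by a pair of standard generators of $\PSL(2,\ZZ)$.

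In the toroidal case, I would fix oriented representatives of $\alpha,\beta$ and identify isotopy classes of essential simple closed curves in $N$ with primitive vectors in $H_1(N;\ZZ)\cong\ZZ^2$ modulo sign, with $\alpha\leftrightarrow (1,0)$ and $\beta\leftrightarrow(0,1)$. The standard formula $T_\gamma(v) = v + \hat{\imath}(\gamma,v)\,\gamma$ for the action of a Dehn twist on $H_1$ then yields
\[ T_\alpha(\beta) = \alpha+\beta = T_\beta^{-1}(\alpha), \qquad T_\alpha^{-1}(\beta) = \beta-\alpha = T_\beta(\alpha), \]
after identifying vectors that differ by a sign. For the spherical case, one uses the standard two-fold branched cover of the four-holed sphere $N$ by a one-holed torus $\widetilde{N}$; this cover is well known to induce a simplicial isomorphism between the two Farey complexes $\C(\widetilde{N})$ and $\C(N)$, and to intertwine the Dehn twists about the preimages of $\alpha,\beta$ in $\widetilde{N}$ with the half-twists $H_\alpha,H_\beta$ on $N$. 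The preceding calculation therefore transports verbatim to the spherical setting, yielding the two displayed equalities.

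For the uniqueness assertion, the key fact is that in the Farey complex every edge lies on exactly two Farey triangles, and hence any pair of adjacent vertices admits exactly two common neighbors within $\C(N)$; these are precisely the two curves produced above. To complete the argument one must also verify that every common Farey neighbor $\gamma$ of $\alpha$ and $\beta$ in $S$ is in fact isotopic into $N$, which I would handle by a short surgery argument: if $\gamma$ intersected $\partial N$ essentially, then the constraints $i(\gamma,\alpha),i(\gamma,\beta)\in\{1,2\}$ imposed by Farey adjacency, together with the fact that $\alpha\cup\beta$ fills $N$, would force an inessential bigon between an outermost arc of $\gamma\cap N$ and $\alpha$ or $\beta$, contradicting minimal position. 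I expect this last step to be the only real obstacle in the proof; the algebraic portion in the first two paragraphs is a routine bookkeeping exercise.
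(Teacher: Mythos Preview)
The paper does not give a proof of this proposition; it is simply introduced as a ``well-known fact''.  Your computation of the two equalities in the toroidal case is correct and standard, and the idea of transporting it to the spherical case via the hyperelliptic correspondence is the right one (though your phrasing is slightly off: the relevant cover is the double cover of the sphere with four marked points by the \emph{closed} torus, not a one-holed torus; this still induces the desired identification of Farey complexes and intertwines Dehn twists with half-twists).  Your argument that an edge of the Farey complex lies on exactly two triangles, and hence that there are exactly two common neighbors of $\alpha$ and $\beta$ \emph{inside} $\C(N)$, is also fine.

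The genuine problem is the last paragraph.  You correctly identify that, for the uniqueness claim to hold in $\C(S)$ rather than just in $\C(N)$, one would need to know that every common Farey neighbor $\gamma$ of $\alpha$ and $\beta$ is isotopic into $N$.  But this is \emph{false}, and your surgery sketch cannot be completed.  Concretely, take $S=S_{2,0}$ with a standard chain $\alpha_1,\ldots,\alpha_5$, let $\alpha=\alpha_1$, $\beta=\alpha_2$, $N=N(\alpha_1\cup\alpha_2)$, and set $\gamma=T_{\alpha_3}(\alpha_2)$.  Then $i(\gamma,\alpha_1)=i(\alpha_2,\alpha_1)=1$ (since $\alpha_3$ misses $\alpha_1$) and $i(\gamma,\alpha_2)=i(\alpha_3,\alpha_2)^2=1$, so $\gamma$ is a Farey neighbor of both $\alpha_1$ and $\alpha_2$; yet $i(\gamma,\partial N)\geq 2$, so $\gamma\not\subset N$ and $\gamma$ is distinct from $T_{\alpha_1}^{\pm1}(\alpha_2)$.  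In your outermost-arc argument there is in fact only one arc of $\gamma\cap N$, and it meets each of $\alpha,\beta$ exactly once with no bigons---so nothing forces a contradiction.

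This does not damage the paper: in the only places Proposition~\ref{P:twistable_Farey_neighbors} is invoked (Propositions~\ref{P:strings_prop1} and~\ref{P:closed_string}), the proof separately establishes, via detectability and the associated pants decompositions, that the candidate curve lies in the correct $\xi=1$ subsurface $N$; once that is known, uniqueness in $\C(N)$ is all that is required.  So the right fix is to read the proposition as asserting uniqueness among curves in $\C(N)$, which is exactly what your first two paragraphs prove, and to drop the final paragraph entirely.
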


Sets of twistable Farey neighbors which interact with each other frequently occur in rigid sets.  We distinguish one particular type of such set in the following definition:
\begin{definition} \label{D:strings}
Suppose $Y$ is a rigid subset of $\C(S)$ and $ A = \{ \alpha_1,\ldots,\alpha_k \} \subset Y$.  
We say that $A$ is a {\em closed string of Farey neighbors} in $Y$ provided the following conditions are satisfied, counting indices modulo $k$:
\begin{enumerate}
\item The curves $\alpha_i,\alpha_{i+1}$ are twistable Farey neighbors with twisting pair $f_{\alpha_i},f_{\alpha_{i+1}}$.
\item  $i(\alpha_i,\alpha_{i+1}) \neq 0$ is $Y$--detectable.
\item   $i(\alpha_i,\alpha_j) = 0$ if $i-j \neq \pm 1$ modulo $k$.
\item $\alpha_i,\alpha_{i+1},\alpha_{i+2},\alpha_{i+3}$ satisfies the hypothesis of Lemma~\ref{l:fareydetect}.
\end{enumerate}

Given a closed string of twistable Farey neighbors $ A \subset Y$, we define $$Y_A= Y \cup \{ f_{\alpha_i}^{\pm 1}(\alpha_j) \}_{i,j = 1}^k.$$
\end{definition}

\begin{remark} Two comments are in order:
\begin{enumerate}
\item There is a priori some ambiguity in the notation as $f_{\alpha_i}$ can be defined as part of the twisting pair for $\alpha_i,\alpha_{i+1}$ as well as for $\alpha_{i-1},\alpha_i$.  However, if $\alpha_i$ is part of two pairs of different twistable Farey neighbors in $Y$, then they must both be toroidal or both spherical as this is determined by the homeomorphism type of $\alpha_i$.  Consequently, the mapping class $f_{\alpha_i}$ is independent of what twistable pair it is included in.
\item The set $Y_A$ has a more descriptive definition, given condition (3) of Definition~\ref{D:strings}.  Namely,
\[ Y_A = Y \cup \{ f_{\alpha_i}^{\pm 1}(\alpha_j) \mid i - j = \pm 1 \mbox{ modulo } k \}. \]
\end{enumerate}
\end{remark}

See Figure \ref{F:twistable} for an example of a closed string of twistable Farey neighbors and two of their images under the twisting pair.

\begin{figure}[htb]
\begin{center}
\includegraphics[width=2in,height=2in]{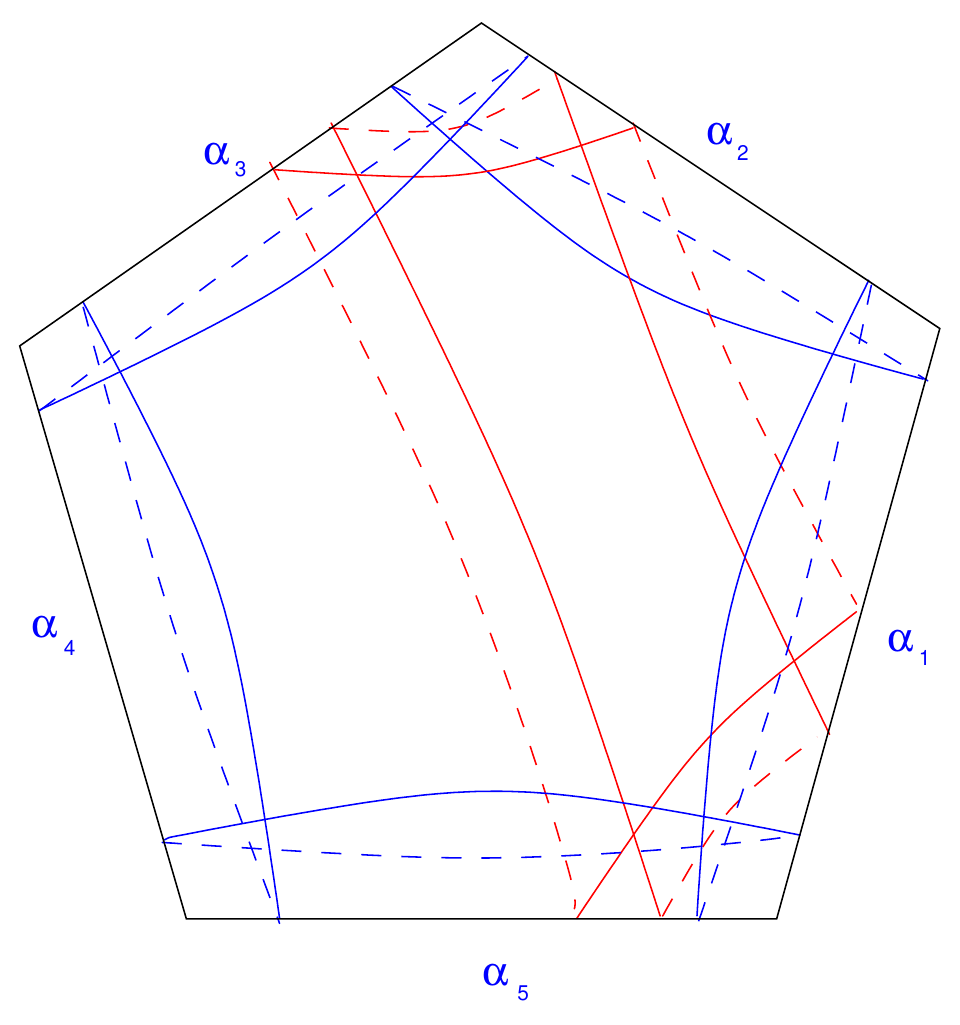} \caption{The set $Y= A = \{\alpha_1, \ldots, \alpha_5\}$ is the rigid set $\XX(S_{0,5})$ identified in \cite{Luo, AL}, and is a closed string of twistable Farey neighbors. The red curves in the picture are $f_{\alpha_1}(\alpha_2), f_{\alpha_2}(\alpha_1)$,  for the twistable pair $\alpha_1,\alpha_2$.  The automorphism group of $Y_A$ that fixes $Y$ pointwise is generated by an orientation-reversing involution $\sigma:S_{0,5} \to S_{0,5}$ that fixes $\alpha_i$ and interchanges $f_{\alpha_i}$ and $f_{\alpha_{i+1}}$, for all $i$ (mod 5).  } \label{F:twistable}
\end{center}
\end{figure}

The situation in the next proposition arises in multiple settings, and provides a way to extend a rigid set to a larger set which is nearly rigid.
\begin{proposition} \label{P:strings_prop1}
Let $Y$ be a rigid subset of $\C(S)$ and $A = \{ \alpha_1,\ldots,\alpha_k\} \subset Y$  a closed string of twistable Farey neighbors in $Y$.  Then, counting indices modulo $k$:
\begin{enumerate}
\item $f_{\alpha_i}^{\pm 1}(\alpha_{i+1}) = f_{\alpha_{i+1}}^{\mp 1}(\alpha_i)$ are the unique common Farey neighbors of $\alpha_i$ and $\alpha_{i+1}$, 
 \item  $i(f_{\alpha_i}^{\pm 1}(\alpha_j),\alpha_{j'}) \neq 0$ for all $i$ and all $(j,j') \in \{(i+1,i),(i+1,i+1),(i-1,i),(i-1,i-1)\}$.Furthermore, these intersections are $Y_A$--detectable.
\item For any locally injective simplicial map $\phi \colon Y_A \to \C(S)$, 
\[ \phi(f_{\alpha_i}(\alpha_{i+1})) = \phi(f_{\alpha_{i+1}}^{-1}(\alpha_i))\mbox{ and } \phi(f_{\alpha_i}^{-1}(\alpha_{i+1}))  = \phi(f_{\alpha_{i+1}}(\alpha_i)) \] are the unique Farey neighbors of $\phi(\alpha_i)$ and $\phi(\alpha_{i+1})$.
\end{enumerate}
\label{l:twistable}
\end{proposition}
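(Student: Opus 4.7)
The plan is to address the three assertions in order, with part~(3) requiring most of the work. Part~(1) is immediate from Proposition~\ref{P:twistable_Farey_neighbors} applied to each consecutive twistable pair $(\alpha_i,\alpha_{i+1})$: that proposition supplies both the identity $f_{\alpha_i}^{\pm 1}(\alpha_{i+1}) = f_{\alpha_{i+1}}^{\mp 1}(\alpha_i)$ and the uniqueness of common Farey neighbors. For part~(2), the non-vanishing of intersections is standard: $f_{\alpha_i}$ preserves $\alpha_i$ setwise and sends $\alpha_{i\pm 1}$ to a distinct Farey neighbor in $N(\alpha_i\cup\alpha_{i\pm 1})$, which must therefore intersect both $\alpha_i$ and $\alpha_{i\pm 1}$. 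For $Y_A$-detectability, I would bootstrap from the $Y$-detectability built into the definition of a closed string: if $P_{\alpha_i},P_{\alpha_j}\subset Y$ witness the detectable intersection of $\alpha_i,\alpha_j$ (with $j=i\pm 1$), then because $f_{\alpha_i}$ is supported on $N(\alpha_i\cup\alpha_j)$, replacing $\alpha_j$ by $f_{\alpha_i}^{\pm 1}(\alpha_j)$ in $P_{\alpha_j}$ yields a pants decomposition inside $Y_A$ whose complement still equals $P_{\alpha_i}-\alpha_i$.

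Part~(3) is the substantive step. Fix a locally injective simplicial map $\phi\colon Y_A\to\C(S)$. The equalities $\phi(f_{\alpha_i}(\alpha_{i+1})) = \phi(f_{\alpha_{i+1}}^{-1}(\alpha_i))$ are tautological from part~(1), since the two curves already coincide in $S$. The content is to show that these images are the unique common Farey neighbors of $\phi(\alpha_i)$ and $\phi(\alpha_{i+1})$. First, I would apply Lemma~\ref{l:fareydetect} to the quadruple $\phi(\alpha_{i-1}),\phi(\alpha_i),\phi(\alpha_{i+1}),\phi(\alpha_{i+2})$: the hypotheses hold for the originals by condition~(4) of Definition~\ref{D:strings}, and they transfer to the images because simpliciality preserves disjointness, while the $Y$-detectability of consecutive intersections together with Lemma~\ref{L:detectable2detectable} preserves the non-vanishing of intersection numbers and the filling of a $\xi=1$ subsurface $\widetilde N$. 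This shows $\phi(\alpha_i),\phi(\alpha_{i+1})$ are Farey neighbors in $\widetilde N$. The homeomorphism type of $\widetilde N$ (one-holed torus versus four-holed sphere) is determined by $i(\phi(\alpha_i),\phi(\alpha_{i+1}))$, which matches $i(\alpha_i,\alpha_{i+1})$ via the pants decompositions from $Y_A$, so the image pair is again twistable and Proposition~\ref{P:twistable_Farey_neighbors} supplies exactly two common Farey neighbors. Second, I would apply Lemma~\ref{l:fareydetect} once more to a quadruple involving $\phi(f_{\alpha_i}(\alpha_{i+1}))$ in place of $\phi(\alpha_{i+1})$, and a symmetric quadruple with $\alpha_i$ and $\alpha_{i+1}$ exchanged, to conclude that $\phi(f_{\alpha_i}(\alpha_{i+1}))$ is a common Farey neighbor of both $\phi(\alpha_i)$ and $\phi(\alpha_{i+1})$. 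The uniqueness statement in part~(3) then follows from Proposition~\ref{P:twistable_Farey_neighbors}.

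The main obstacle I expect is the second application of Lemma~\ref{l:fareydetect}. The naive quadruple $\phi(\alpha_{i-1}),\phi(\alpha_i),\phi(f_{\alpha_i}(\alpha_{i+1})),\phi(\alpha_{i+2})$ fails its disjointness hypothesis because $f_{\alpha_i}$ fixes $\alpha_{i+2}$ while moving $\alpha_{i+1}$, and so $i(\alpha_{i+2},f_{\alpha_i}(\alpha_{i+1}))=i(\alpha_{i+2},\alpha_{i+1})\neq 0$. One must therefore replace the outer curves of the quadruple by other curves drawn from $Y_A$ (for instance from among the $f_{\alpha_j}^{\pm 1}(\alpha_{j'})$ that lie in $Y_A$) that are disjoint from $f_{\alpha_i}(\alpha_{i+1})$ but still interact with $\alpha_i$ and $\alpha_{i+1}$ in the configuration required by Lemma~\ref{l:fareydetect}, and then check that the detectability data is also carried by these replacement curves. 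The combinatorial verification that appropriate replacements always exist, uniformly in $i$ and in the sign of the twist, is the delicate portion of the argument.
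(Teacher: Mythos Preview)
Your treatment of parts (1) and (2) matches the paper's proof essentially verbatim. For part (3) your overall plan---apply Lemma~\ref{l:fareydetect} to suitable quadruples in $\phi(Y_A)$---is also the paper's, and you are right that the naive quadruple $\alpha_{i-1},\alpha_i,f_{\alpha_i}(\alpha_{i+1}),\alpha_{i+2}$ fails the hypotheses. However, your diagnosis of the failure is incorrect: the pair you single out, $f_{\alpha_i}(\alpha_{i+1})$ and $\alpha_{i+2}$, occupy positions $3$ and $4$ in the quadruple, and condition~(2) of Lemma~\ref{l:fareydetect} \emph{requires} consecutive curves to intersect, so $i(\alpha_{i+2},f_{\alpha_i}(\alpha_{i+1}))\neq 0$ is exactly what is needed. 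The actual obstruction is at positions $1$ and $3$: since $\alpha_{i-1}$ meets $\alpha_i$, the curve $\alpha_{i-1}$ is moved by $f_{\alpha_i}$, and one finds $i(\alpha_{i-1},f_{\alpha_i}(\alpha_{i+1}))\neq 0$, violating the required disjointness of non-adjacent curves.

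More significantly, you leave the replacement curves unspecified, calling their existence ``the delicate portion.'' The paper supplies a clean and uniform answer: replace $\alpha_{i-1}$ by $f_{\alpha_i}(\alpha_{i-1})$, so that the quadruple becomes
\[
f_{\alpha_i}(\alpha_{i-1}),\quad \alpha_i=f_{\alpha_i}(\alpha_i),\quad f_{\alpha_i}(\alpha_{i+1}),\quad \alpha_{i+2}=f_{\alpha_i}(\alpha_{i+2}),
\]
which is precisely the $f_{\alpha_i}$--image of the original chain quadruple $\alpha_{i-1},\alpha_i,\alpha_{i+1},\alpha_{i+2}$. Since $f_{\alpha_i}$ is a homeomorphism, the intersection pattern (condition~(2) of Lemma~\ref{l:fareydetect}) transfers automatically, and $f_{\alpha_i}(\alpha_{i-1})\in Y_A$ by construction, so the detectability needed for condition~(1) comes directly from part~(2) of the proposition. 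You also do not address condition~(3) of Lemma~\ref{l:fareydetect} for the image quadruple; the paper handles this by observing that twistability forces $N(\alpha_i\cup\alpha_{i+1})$ to have only one boundary component essential in $S$ (the remaining holes being punctures), and then uses rigidity of $Y$ to produce $f\in\Mod^{\pm}(S)$ with $\phi|_Y=f|_Y$, so that the subsurface filled by $\phi(\alpha_i)$ and $\phi(f_{\alpha_i}(\alpha_{i+1}))$ is $f(N)$ and hence also has a single essential boundary component.
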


\begin{proof}
Conclusion (1) follows immediately from Definition~\ref{D:strings} part (1) and Proposition~\ref{P:twistable_Farey_neighbors}.

Next we prove conclusion (2).   Fix $(j,j')$ as in the proposition.  Then since $i(\alpha_i,\alpha_j) \neq 0$, it follows that $f_{\alpha_i}(\alpha_j)$ nontrivially intersects both $\alpha_i$ and $\alpha_j$.  Since $\alpha_{j'}$ is one of these latter two curves, the first statement follows.  By part (2) of Definition~\ref{D:strings}, $i(\alpha_i,\alpha_j) \neq 0$ is $Y$--detectable.  Let $P_{\alpha_i},P_{\alpha_j} \subset Y$ be pants decompositions containing $\alpha_i$ and $\alpha_j$, respectively, as in Definition~\ref{D:detectable}, and set $P = P_{\alpha_i} - \alpha_i = P_{\alpha_j} - \alpha_j$.  Then since $f_{\alpha_i}$ is  supported in $N(\alpha_i \cup \alpha_j)$ which is contained in the complement of $P$, we can define two more pants decompositions
\[ P_{f_{\alpha_i}^{\pm 1}(\alpha_j)} = P \cup f_{\alpha_i}^{\pm 1}(\alpha_j) \subset Y_A. \]
Together with $P_{\alpha_i}$ and $P_{\alpha_j}$ these are sufficient to detect all the intersections claimed.  In all cases, $P \subset Y$ is the pants decomposition of the complement of $N(\alpha_i \cup \alpha_j)$, as required.

For conclusion (3), we explain why $\phi(f_{\alpha_i}(\alpha_{i+1}))$ and $\phi(\alpha_i)$ are Farey neighbors.  The other three cases are similar.  For this, we consider the set
\[\{ \, \,  \phi(f_{\alpha_i}(\alpha_{i-1})) \, \, , \, \, \phi(\alpha_i) = \phi(f_{\alpha_i}(\alpha_i))  \, \, , \, \, \phi(f_{\alpha_i}(\alpha_{i+1})) \, \, , \, \, \phi(\alpha_{i+2}) =\phi(f_{\alpha_i}(\alpha_{i+2})) \, \, \}. \]
The equalities here follow from the disjointness property (3) of Definition~\ref{D:strings} since a Dehn twist or half-twist has no effect on a curve that is disjoint from the curve supporting the twist.  The goal is to prove that all three conditions of Lemma~\ref{l:fareydetect} are satisfied.

By part (2) of the proposition and Lemma~\ref{L:detectable2detectable} it follows that any two consecutive curves in this set have $\phi(Y_A)$--detectable intersections, and fill a $\xi$=1 subsurface.  Therefore condition (1) of Lemma~\ref{l:fareydetect} is satisfied for this set of curves.  Since $\alpha_{i-1},\alpha_i,\alpha_{i+1},\alpha_{i+2}$ satisfy condition (2) of Lemma~\ref{l:fareydetect} and the given set is the image of these under the simplicial map $\phi \circ f_{\alpha_i}$, these curves also satisfy condition (2) of Lemma~\ref{l:fareydetect}.

Finally, we wish to verify that condition (3) of Lemma~\ref{l:fareydetect} is satisfied.  Since $Y$ is rigid, there exists $f \in \Mod^{\pm 1}(S)$ inducing $\phi|_{Y}$.  We also note that $N = N(\alpha_i \cup \alpha_{i+1}) = N(\alpha_i \cup f_{\alpha_i}(\alpha_{i+1}))$ has only one boundary component---all other holes of this subsurface (if any) must be punctures of $S$.  
Since the pants decomposition of the complement of $N$ is
\[ P = P_{\alpha_i} - \alpha_i = P_{\alpha_{i+1}} - \alpha_{i+1} = P_{f_{\alpha_i}(\alpha_{i+1})} - f_{\alpha_i}(\alpha_{i+1}).\]
and is contained in $Y$, we see that $\phi(P) = f(P)$.  Since this is used in the $\phi(Y_A)$--detection of both $i(\phi(\alpha_i),\phi(\alpha_{i+1})) \neq 0$ and $i(\phi(\alpha_i),\phi(f_{\alpha_i}(\alpha_{i+1}))) \neq 0$, we have
\[ f(N) = N(f(\alpha_i) \cup f(\alpha_{i+1})) = N(\phi(\alpha_i) \cup \phi(\alpha_{i+1})) = N(\phi(\alpha_i) \cup \phi(f_{\alpha_i}(\alpha_{i+1}))).\]
Consequently, this surface has only one boundary component, and so condition (3) of Lemma~\ref{l:fareydetect} is satisfied.
\end{proof}

\begin{proposition} \label{P:closed_string}
If $Y$ is a rigid subset of $\C(S)$ and $A = \{ \alpha_1,\ldots,\alpha_k \}$ is a closed string of twistable Farey neighbors in $Y$, then any locally injective simplicial map $\phi \colon Y_A \to \C(S)$ which is the identity on $Y$ satisfies $\phi(Y_A) = Y_A$.  Furthermore, the subgroup of the automorphism group of $Y_A$ fixing $Y$ pointwise has order at most $2$.  If this subgroup is nontrivial, then it is generated by the involution $\sigma \colon Y_A \to Y_A$ given by $\sigma(f_{\alpha_i}(\alpha_j)) = f_{\alpha_i}^{-1}(\alpha_j)$ for all $i,j$ (or equivalently, for all $i,j$ with $i-j = \pm 1$ (modulo $k$)).
\end{proposition}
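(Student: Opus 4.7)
The strategy has two parts. I first prove $\phi(Y_A) = Y_A$ whenever $\phi$ is a locally injective simplicial map with $\phi|_Y = \mathrm{id}$, so that such a $\phi$ is automatically an automorphism of $Y_A$. I then bound the order of the resulting automorphism group by analyzing swap patterns on the ``new'' curves $f_{\alpha_i}^{\pm 1}(\alpha_{i+1})$.

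For the first part, apply Proposition~\ref{P:strings_prop1}(3) with $\phi|_Y = \mathrm{id}$: for each $i$, both $\phi(f_{\alpha_i}(\alpha_{i+1}))$ and $\phi(f_{\alpha_i}^{-1}(\alpha_{i+1}))$ are common Farey neighbors of $\alpha_i$ and $\alpha_{i+1}$, and by Proposition~\ref{P:twistable_Farey_neighbors} the only such neighbors are the two curves $f_{\alpha_i}^{\pm 1}(\alpha_{i+1})$ themselves. Because both lie in the star of $\alpha_i$ in $Y_A$, local injectivity of $\phi$ forces their images to be distinct, so on each pair $\phi$ acts either trivially or by swap; in particular $\phi$ is a bijection of $Y_A$. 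Encode any such $\phi$ by $\epsilon = (\epsilon_1, \ldots, \epsilon_k) \in \{\pm 1\}^k$ with $\phi(f_{\alpha_i}(\alpha_{i+1})) = f_{\alpha_i}^{\epsilon_i}(\alpha_{i+1})$, indices modulo $k$; the choice $\epsilon \equiv +1$ gives the identity and $\epsilon \equiv -1$ gives $\sigma$. Well-definedness of $\sigma$ on $Y_A$ follows from the identity $f_{\alpha_i}^{\pm 1}(\alpha_{i+1}) = f_{\alpha_{i+1}}^{\mp 1}(\alpha_i)$ supplied by Proposition~\ref{P:twistable_Farey_neighbors}.

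The main obstacle is ruling out the $2^k - 2$ ``partial swap'' patterns. I plan to examine adjacencies in $Y_A$ between new curves on consecutive edges $(i, i+1)$ and $(i+1, i+2)$: there is an asymmetry, coming from the supports of the twists $f_{\alpha_i}$ and $f_{\alpha_{i+1}}$ relative to the pivot curve $\alpha_{i+1}$, which distinguishes the adjacency of $f_{\alpha_i}(\alpha_{i+1})$ with $f_{\alpha_{i+1}}(\alpha_{i+2})$ from that of $f_{\alpha_i}^{-1}(\alpha_{i+1})$ with $f_{\alpha_{i+1}}(\alpha_{i+2})$. Since a simplicial $\phi$ preserves these adjacencies, one obtains $\epsilon_i = \epsilon_{i+1}$; iterating around the cycle forces $\epsilon$ to be constant, and hence $\phi \in \{\mathrm{id}, \sigma\}$.
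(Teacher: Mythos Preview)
Your proposal is correct and follows essentially the same route as the paper's proof: both establish $\phi(Y_A)=Y_A$ via Proposition~\ref{P:strings_prop1}, and both reduce the second part to propagating a swap/no-swap choice around the cycle using a disjointness asymmetry between new curves on adjacent edges. The paper makes that asymmetry explicit by writing both curves as twists about a single $\alpha_i$ (so that $i(f_{\alpha_i}(\alpha_{i-1}),f_{\alpha_i}(\alpha_{i+1}))=0$ is immediate since the same homeomorphism is applied to disjoint curves, while the cross terms are nonzero); your formulation is equivalent after applying the identity $f_{\alpha_i}^{\pm 1}(\alpha_{i+1}) = f_{\alpha_{i+1}}^{\mp 1}(\alpha_i)$, which you already invoke, and doing so would sharpen your phrase ``coming from the supports of the twists'' into the concrete intersection computation.
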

\begin{proof}
Since $f_{\alpha_i}^{\pm 1}(\alpha_{i+1})$ is the unique pair of common Farey neighbors of $\alpha_i,\alpha_{i+1}$, and since $\phi(\alpha_i) = \alpha_i$, $\phi(\alpha_{i+1}) = \alpha_{i+1}$, Proposition~\ref{P:strings_prop1} implies that for every $i$ and $j$ with $i-j = \pm 1$ (modulo $k$), we have
\[  \{ \phi(f_{\alpha_i}(\alpha_j)),\phi(f_{\alpha_i}^{-1}(\alpha_j)) \} = \{ f_{\alpha_i}(\alpha_j),f_{\alpha_i}^{-1}(\alpha_j) \}, \]
and so the first claim of the proposition follows.

Next we suppose $\phi$ is any automorphism of $Y$ that restricts to the identity on $Y$.  We claim that if there is some $i,j$ with $i-j = \pm 1$ (modulo $k$) so that $\phi(f_{\alpha_i}(\alpha_j)) = f_{\alpha_i}^{-1}(\alpha_j)$, then this is true for every $i,j$ with $i-j = \pm 1$ (modulo $k$).  To this end, suppose that $\phi(f_{\alpha_i}(\alpha_{i+1})) = f_{\alpha_i}^{-1}(\alpha_{i+1})$ for some index $i$ (the case $\phi(f_{\alpha_i}(\alpha_{i-1})) = f_{\alpha_i}^{-1}(\alpha_{i-1})$ is similar).  Then note that
\[ i(f_{\alpha_i}(\alpha_{i-1}),f_{\alpha_i}(\alpha_{i+1})) = 0 = i(f_{\alpha_i}^{-1}(\alpha_{i-1}),f_{\alpha_i}^{-1}(\alpha_{i+1})) \]
while
\[ i(f_{\alpha_i}(\alpha_{i-1}),f_{\alpha_i}^{-1}(\alpha_{i+1})) \neq 0 \neq i(f_{\alpha_i}^{-1}(\alpha_{i-1}),f_{\alpha_i}(\alpha_{i+1})). \]
Since $\phi$ is simplicial and locally injective, we must have $\phi(f_{\alpha_i}(\alpha_{i-1})) = f_{\alpha_i}^{-1}(\alpha_{i-1})$ and $\phi(f_{\alpha_i}^{-1}(\alpha_{i-1})) = f_{\alpha_i}(\alpha_{i-1})$.  Consequently,
\[ \phi(f_{\alpha_{i-1}}(\alpha_i)) = \phi(f_{\alpha_i}^{-1}(\alpha_{i-1})) = f_{\alpha_i}(\alpha_{i-1}) = f_{\alpha_{i-1}}^{-1}(\alpha_i).\]
Repeating this argument again, it follows that for all $i$, $\phi(f_{\alpha_i}(\alpha_{i+1})) = f_{\alpha_i}^{-1}(\alpha_{i+1})$, as required.  Thus, in this case, $\phi$ is given by $\sigma$ as in the statement of the proposition.

If we are not in the situation of the previous paragraph, then it follows that $\phi$ is the identity, completing the proof.
\end{proof}

After this discussion we are  in a position to explain how to obtain an exhaustion of $\C(S)$ by finite rigid sets. Here, $\Mod(S)$ denotes the index 2 subgroup of $\Mod^{\pm}(S)$ consisting of those mapping classes that preserve orientation. 

\begin{proposition}
Let $Y\subset  \C(S)$ be a finite rigid set such that  $\Mod(S)\cdot Y = \C(S)$. Suppose there exists $G\subset Y$ such that:

\begin{enumerate}
\item The set $\{f_\alpha \mid \alpha \in G\}$ generates $\Mod(S)$;
\item  $Y \cap f_\alpha(Y)$ is weakly rigid, for all $\alpha \in  G$.
\end{enumerate}
Then there exists an sequence $Y= Y_1 \subset Y_2 \subset \ldots \subset Y_n \subset \ldots$ such that $Y_i$ is a finite rigid set, has trivial pointwise stabilizer in $\Mod^{\pm}(S)$ for all $i$, and $$\bigcup_{i\in \mathbb N} Y_i = \C(S).$$ 
\label{P:finish}
\end{proposition}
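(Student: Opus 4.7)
The plan is to enlarge $Y$ by iteratively adjoining the translates $h(Y)$ as $h$ ranges over $\Mod(S)$, gluing one translate at a time by means of Lemma~\ref{L:glue}. Two elementary observations drive the argument. First, since $Y$ contains the weakly rigid set $Y \cap f_\alpha(Y)$, the set $Y$ itself is weakly rigid. Second, both rigidity and weak rigidity are preserved under the action of $\Mod^{\pm}(S)$: for any $h \in \Mod^{\pm}(S)$, precomposing a locally injective simplicial map $h(Z) \to \C(S)$ with $h$ yields a locally injective simplicial map $Z \to \C(S)$, which gives the claims directly from the definitions. In particular, $h(Y \cap f_\alpha(Y))$ is weakly rigid for every $h \in \Mod(S)$ and every $\alpha \in G$.

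Using hypothesis (1), I enumerate $\Mod(S) = \{h_0, h_1, h_2, \ldots\}$ with $h_0 = \mathrm{id}$ so that for each $n \geq 1$ there exist $m(n) < n$ and $g_n \in \{f_\alpha^{\pm 1} \mid \alpha \in G\}$ with $h_n = g_n h_{m(n)}$; this is a breadth-first traversal of the Cayley graph of $\Mod(S)$ with respect to the generating set provided by (1). Set $Y_n = \bigcup_{i=0}^n h_i(Y)$. Each $Y_n$ is finite, the chain $Y_0 \subset Y_1 \subset \cdots$ is nested, every $Y_n$ contains $Y$ and is therefore weakly rigid (so has trivial pointwise stabilizer in $\Mod^{\pm}(S)$), and $\bigcup_n Y_n = \Mod(S)\cdot Y = \C(S)$.

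It remains to prove by induction on $n$ that each $Y_n$ is rigid. The base case $Y_0 = Y$ holds by hypothesis. For the inductive step, $h_n(Y)$ is rigid by the observation above. Using $h_n = g_n h_{m(n)}$, one computes
\[ h_n(Y) \cap h_{m(n)}(Y) \;=\; g_n h_{m(n)}(Y) \cap h_{m(n)}(Y) \;=\; h_{m(n)}\bigl( g_n(Y) \cap Y \bigr), \]
and $g_n(Y) \cap Y$ is either $f_\alpha(Y) \cap Y$ or $f_\alpha^{-1}(f_\alpha(Y) \cap Y)$ depending on the sign of the exponent in $g_n$; either way, it is weakly rigid by hypothesis (2) and the invariance of weak rigidity under $\Mod^{\pm}(S)$. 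Therefore $Y_{n-1} \cap h_n(Y)$ contains the weakly rigid set $h_{m(n)}(Y) \cap h_n(Y)$, and so is itself weakly rigid. Lemma~\ref{L:glue} then gives that $Y_n = Y_{n-1} \cup h_n(Y)$ is rigid, completing the induction. No individual step of the argument is delicate; the real work of the paper lies in verifying hypotheses (1) and (2) for concrete finite rigid sets, which is carried out in Sections~4--6.
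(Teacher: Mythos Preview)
Your approach is the same as the paper's in spirit: glue $\Mod(S)$--translates of $Y$ using Lemma~\ref{L:glue}, with weak rigidity of the overlaps supplied by hypothesis~(2). The paper organizes the induction slightly differently, setting $Y_{n+1} = Y_n \cup \bigcup_{\alpha \in G} f_\alpha(Y_n)$ and using that $Y_n \cap f_\alpha(Y_n) \supset Y \cap f_\alpha(Y)$, but the content is identical.

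There is, however, a genuine computational slip in your inductive step. From $h_n = g_n h_{m(n)}$ you claim
\[ g_n h_{m(n)}(Y) \cap h_{m(n)}(Y) \;=\; h_{m(n)}\bigl(g_n(Y) \cap Y\bigr), \]
but this equality is false in general: factoring $h_{m(n)}$ out of the left-hand side actually gives $h_{m(n)}\bigl((h_{m(n)}^{-1} g_n h_{m(n)})(Y) \cap Y\bigr)$, which involves a conjugate of $g_n$, and hypothesis~(2) says nothing about $Y \cap (h^{-1} f_\alpha h)(Y)$ for arbitrary $h$. The fix is painless: enumerate via the \emph{right} Cayley graph instead, taking $h_n = h_{m(n)} g_n$. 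Then
\[ h_n(Y) \cap h_{m(n)}(Y) \;=\; h_{m(n)} g_n(Y) \cap h_{m(n)}(Y) \;=\; h_{m(n)}\bigl(g_n(Y) \cap Y\bigr) \]
is correct, and the rest of your argument (including the case analysis on the sign of $g_n$) goes through unchanged.
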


\begin{proof}
First, the fact that $Y$ is rigid implies that  $f_\alpha(Y)$ is rigid for all $\alpha \in Y$. Therefore,  the set $Y_2:=Y \cup f_G(Y)$ is also rigid by assumption (2) and repeated application of Lemma~\ref{L:glue}. We now define, for all $n\ge 2$, $$Y_{n+1}:= Y_n \cup f_G (Y_n).$$ By induction, we see that $Y_n$ is rigid for all $n$ and so the first claim follows. 
Next, the pointwise stabilizer of $Y$ in $\Mod^{\pm}(S)$ is trivial because $Y \cap f_\alpha(Y)$ is weakly rigid. Therefore, $Y_n$ has  trivial pointwise stabilizer in $\Mod^{\pm}(S)$,
as $Y\subset Y_n$ for all $n$. 
Finally, since $\{f_\alpha \mid \alpha \in  G\}$ generates $\Mod(S)$ and $\Mod(S)\cdot Y = \C(S)$, it follows that  $$\bigcup_{i\in \mathbb N} Y_i = \C(S),$$ 
which completes the proof.
\end{proof}

We end this section by explaining how Theorem \ref{main} implies that curve complexes are simplicially rigid: 

\begin{proof}[Proof of Corollary \ref{C:Ivanov}]
Let $S\ne S_{1,2}$, and let $\phi:\C(S)\to \C(S)$ be a locally injective simplicial map. Let $\XX_1\subset \XX_2 \subset \ldots $ be the exhaustion of $\C(S)$ provided by Theorem \ref{main}. Since $\XX_i$ is rigid and has trivial pointwise stabilizer in $\Mod^{\pm}(S)$, there exists a unique mapping class $h_i\in \Mod^{\pm}(S)$ such that $h_i|_{\XX_i} = \phi|_{\XX_i}$. Finally, Lemma \ref{L:glue} implies that $h_i= h_j$ for all $i,j$, and thus the result follows. 
\end{proof}

\section{Punctured spheres} \label{S:punctured spheres}

In this section we prove Theorem \ref{main} for $S=S_{0,n}$. If $n\le 3$ then $\C(S)$ is empty and thus the result is trivially true. The case $n=4$ is dealt with at the end of this section, as it needs special treatment. Thus, from now on we assume that $n\ge 5$. As in \cite{AL} we represent $S$ as the double of an $n$-gon $\Delta$ with vertices removed, and define $\XX$ as the set of curves on $S$ obtained by connecting every non-adjacent pair of sides of $\Delta$ by a straight line segment and then doubling; see Figure \ref{F:octagonarcs} for the case $n = 8$:

\begin{figure}[htb]
\begin{center}
\includegraphics[width=1.5in,height=1.5in]{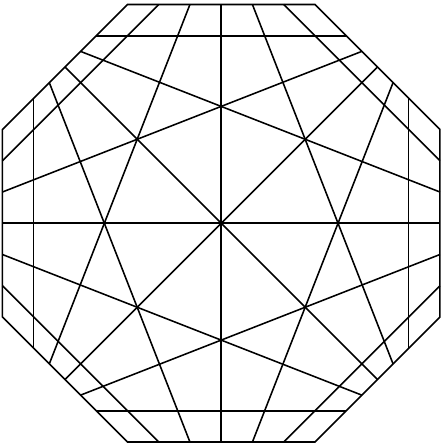} \caption{Octagon and arcs for $S_{0,8}$.} \label{F:octagonarcs}
\end{center}
\end{figure}

Note that the point-wise stabilizer of $\XX$ in $\Mod^{\pm}(S)$ has order two, and is generated by an orientation-reversing involution $i:S\to S$ that interchanges the two copies of  $\Delta$. The rigidity of the set $\XX$, which was established in \cite{AL}, may be rephrased as follows:

\begin{theorem}[\cite{AL}]\label{T:sphere}
For any locally injective simplicial map $\phi:\XX \to \C(S)$, there exists a unique $h \in \Mod(S)$ such that $h|_\XX = \phi$, unique up to precomposing with  $i$. \label{t:sphere}
\end{theorem}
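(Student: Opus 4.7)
The plan is to identify $\XX$ combinatorially with the chord complex of $\Delta$ and prove rigidity for this combinatorial model.  Write $\alpha_a \in \XX$ for the curve obtained by doubling a chord arc $a$ of $\Delta$ (an arc connecting two non-adjacent sides).  Two chord arcs $a, b$ are either disjoint, in which case $i(\alpha_a, \alpha_b) = 0$, or meet in one interior point, in which case $i(\alpha_a, \alpha_b) = 2$ and $\alpha_a, \alpha_b$ fill a four-holed sphere in $S$.  A maximal disjoint family of $n-3$ chord arcs triangulates $\Delta$ and doubles to a pants decomposition of $S$ contained in $\XX$.

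First I would verify that whenever $a$ crosses $b$, the intersection $i(\alpha_a, \alpha_b)$ is $\XX$-detectable in the sense of Definition~\ref{D:detectable}: the chords $a, b$ span a quadrilateral $Q \subset \Delta$, and triangulating the complement of $Q$ arbitrarily and completing $Q$ by either of its two diagonals gives triangulations $T_a, T_b$ of $\Delta$ with $a \in T_a$, $b \in T_b$, and $T_a \setminus \{a\} = T_b \setminus \{b\}$.  Doubling supplies the pants decompositions $P_{\alpha_a}, P_{\alpha_b} \subset \XX$ required by the definition.  By Lemma~\ref{L:detectable2detectable}, for any locally injective simplicial map $\phi \colon \XX \to \C(S)$, the images $\phi(\alpha_a), \phi(\alpha_b)$ also fill a four-holed sphere.

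The heart of the argument is to upgrade this to the statement that $\phi(\alpha_a)$ and $\phi(\alpha_b)$ are Farey neighbors---equivalent in a four-holed sphere to the minimal intersection number $i = 2$.  For each crossing pair $\{a, b\}$ I would construct auxiliary chord arcs $c, d$ so that the quadruple $\alpha_c, \alpha_a, \alpha_b, \alpha_d$ satisfies the hypotheses of Lemma~\ref{l:fareydetect}.  The arcs $c$ and $d$ must be chosen to interact with the quadrilateral $Q$ in such a way that their doubled curves intersect exactly one boundary component of $N(\alpha_a \cup \alpha_b)$ and are disjoint from the other curves of the quadruple as dictated by condition (2) of the lemma; applying $\phi$ and invoking Lemma~\ref{l:fareydetect} then yields the Farey-neighbor relation between $\phi(\alpha_a)$ and $\phi(\alpha_b)$.

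With this combinatorial data in hand, $\phi$ induces a self-bijection of the chord arcs of $\Delta$ preserving both disjointness and the crossing relation.  Such a bijection is induced by a dihedral symmetry of $\Delta$ (the automorphism group of the chord complex of an $n$-gon is the dihedral group of order $2n$), and this symmetry lifts to a homeomorphism of the double $S$; the two possible lifts differ by the involution $i$, producing a mapping class $h$ whose orientation-preserving representative is unique up to composition with $i$.  The main obstacle will be the construction of the auxiliary arcs $c, d$ in the third paragraph; a careful case analysis is required when $a, b$ lie close to the boundary of $\Delta$ or when $n$ is small (particularly $n = 5$), where very few candidate auxiliary chords are available and the three conditions of Lemma~\ref{l:fareydetect} must still be verified directly.
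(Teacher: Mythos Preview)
This theorem is not proved in the present paper at all: it is quoted from \cite{AL} and used as input.  So there is no proof here to compare against, and your proposal should be read as an attempted reconstruction of the argument in \cite{AL}.

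That said, your outline has a genuine gap at the crucial step.  After establishing that $\phi$ preserves disjointness and Farey-neighbor relations, you write that ``$\phi$ induces a self-bijection of the chord arcs of $\Delta$.''  This does not follow: $\phi$ is a map $\XX \to \C(S)$, and nothing you have proved forces $\phi(\XX) \subset \XX$.  The image $\phi(\alpha_a)$ is some curve on $S$, not a priori a doubled chord of $\Delta$.  Knowing only that the abstract incidence pattern of $\phi(\XX)$ matches that of $\XX$ tells you the two configurations are combinatorially isomorphic, but it does not by itself produce a homeomorphism of $S$ carrying one to the other---that is exactly the content of the theorem.  Your appeal to the dihedral automorphism group of the chord complex is therefore circular: it would apply only after you already know $\phi(\XX)$ is a $\Mod^\pm(S)$-translate of $\XX$.

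What is missing is the reconstruction of the mapping class $h$ from the combinatorial data.  In \cite{AL} this is done (roughly) by first showing that $\phi$ takes a chosen pants decomposition in $\XX$ to a pants decomposition of $S$ of the same topological type, which gives a mapping class matching $\phi$ on that pants decomposition; one then uses the Farey-neighbor information on the remaining curves of $\XX$ to pin down the twist ambiguity along each pants curve.  Alternatively one can argue inductively on $n$, using an outer curve bounding a twice-punctured disk to cut down to a sphere with fewer punctures.  Either way, this reconstruction step is the substance of the proof, and your outline skips it.
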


We are going to enlarge the set $\XX$ in the fashion described in Section \ref{s:3}. We number the sides of the $\Delta$ in a cyclic order, and denote by $\alpha_j$ the curve defined by the arc on $\Delta$ that connects the sides with labels $j$ and $j+2$ mod $n$. Let $A= \{\alpha_1, \ldots, \alpha_n\}$; in the terminology of \cite{AL}, $A$ is the set of {\em chain curves} of $\XX$. 
Observe that every element of $A$ bounds a disk containing  exactly two punctures of $S$, and that if two elements of $A$ have non-zero intersection number then they are Farey neighbors in $\XX$. Thus we see that $A$ is a closed string of $n$ twistable Farey neighbors, and may consider the set $\XX_A$ from Definition \ref{D:strings}. As a first step towards proving Theorem \ref{main} for $S_{0,n}$, we show that $\XX_A$ is rigid. Since the pointwise stabilizer of $\XX_A$ is trivial, this amounts to the following statement: 

\begin{theorem}
\label{T:sphere-larger}
For any locally injective simplicial map $\phi:\XX_A \to \C(S)$, there exists a unique $g \in \Mod^\pm(S)$ such that $g|_{\XX_A} = \phi$.
\end{theorem}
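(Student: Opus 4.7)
The plan is to combine the rigidity of $\XX$ from Theorem~\ref{T:sphere} with Proposition~\ref{P:closed_string}, which controls exactly how a closed string of twistable Farey neighbors can be extended. Given a locally injective simplicial map $\phi\colon\XX_A\to\C(S)$, I would first restrict to $\XX$ and invoke Theorem~\ref{T:sphere} to obtain some $h\in\Mod^\pm(S)$ with $h|_\XX=\phi|_\XX$. Replacing $\phi$ by $\psi=h^{-1}\circ\phi$, which is again locally injective and simplicial, I may assume $\psi|_\XX=\mathrm{id}_\XX$; the goal then becomes showing that $\psi$ agrees on all of $\XX_A$ with either the identity or the orientation-reversing involution $i$.

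Before applying Proposition~\ref{P:closed_string}, one checks that $A=\{\alpha_1,\ldots,\alpha_n\}$ is a closed string of twistable (spherical) Farey neighbors inside $\XX$: each $\alpha_j$ bounds a twice-punctured disk so the half-twist $H_{\alpha_j}=f_{\alpha_j}$ is uniquely defined, cyclically consecutive pairs fill a four-holed sphere, and both the pants decompositions needed for $\XX$-detectability and the curves witnessing hypothesis~(3) of Lemma~\ref{l:fareydetect} are visible directly in the polygon model (Figure~\ref{F:octagonarcs}). Proposition~\ref{P:closed_string} then forces $\psi(\XX_A)=\XX_A$ and says $\psi$ is either $\mathrm{id}_{\XX_A}$ or the combinatorial involution $\sigma$ sending $f_{\alpha_i}^{\pm1}(\alpha_j)$ to $f_{\alpha_i}^{\mp1}(\alpha_j)$.

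The third step is to realize $\sigma$ as the restriction of $i$. Since $i$ interchanges the two copies of $\Delta$ while fixing each defining arc setwise, it fixes every $\alpha_j$ and hence every vertex of $\XX$; because $i$ is orientation-reversing, conjugation sends each half-twist to its inverse, and therefore
\[ i\bigl(f_{\alpha_i}(\alpha_j)\bigr)=\bigl(if_{\alpha_i}i^{-1}\bigr)(i(\alpha_j))=f_{\alpha_i}^{-1}(\alpha_j), \]
so $i|_{\XX_A}=\sigma$. Thus $\phi$ is the restriction of either $h$ or $h\circ i$. Uniqueness follows because any $g\in\Mod^\pm(S)$ fixing $\XX_A$ pointwise also fixes $\XX$ pointwise and is therefore $\mathrm{id}$ or $i$ by Theorem~\ref{T:sphere}; but by Proposition~\ref{P:strings_prop1}(1), $i$ moves $f_{\alpha_1}(\alpha_2)$ to the distinct curve $f_{\alpha_1}^{-1}(\alpha_2)\in\XX_A$, so only $\mathrm{id}$ fixes $\XX_A$ pointwise. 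The main subtlety is the geometric identification $i|_{\XX_A}=\sigma$; once the conjugation identity $if_{\alpha_j}i^{-1}=f_{\alpha_j}^{-1}$ is verified, the rest of the argument is essentially bookkeeping driven by Proposition~\ref{P:closed_string}.
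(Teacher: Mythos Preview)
Your proof is correct and follows essentially the same approach as the paper: reduce to $\phi|_\XX = \mathrm{id}$ via Theorem~\ref{T:sphere}, apply Proposition~\ref{P:closed_string} to conclude $\phi$ is either the identity or the involution $\sigma$ on $\XX_A$, and then identify $\sigma$ with $i|_{\XX_A}$. You spell out a few points the paper leaves implicit---the verification that $A$ is a closed string of twistable Farey neighbors, the conjugation identity $i f_{\alpha_j} i^{-1}=f_{\alpha_j}^{-1}$ realizing $\sigma$, and the uniqueness argument via $f_{\alpha_1}(\alpha_2)\neq f_{\alpha_1}^{-1}(\alpha_2)$---but the structure is identical.
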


\begin{proof}
Let $\phi:\XX_A \to \C(S)$ be a locally injective simplicial map. By Theorem \ref{t:sphere}, there exists  $h \in \Mod^{\pm}(S)$ such that $h|_{\XX} = \phi|_{\XX}$, unique up to precomposing with the involution $i$. Since  $i$ fixes every element of $\XX$, after precomposing $\phi$ with $h^{-1}$ we may assume that $\phi|_{\XX}$ is the identity map. By Proposition \ref{P:closed_string}, $\phi(\XX_A) =\XX_A$; moreover, the automorphism group of $\XX_A$ fixing
$\XX$ pointwise has order two, generated by the involution $\sigma: \XX_A \to \XX_A$ that interchanges $f_{\alpha_i}(\alpha_{i+1}) $ and $f_{\alpha_{i+1}}^{-1}(\alpha_i)$ for all $i$. Since $ i|_{\XX_A}=\sigma$, up to precomposing $\phi$ with $i$, we deduce that $\phi|_{\XX_A}$ is the identity, as we wanted to prove.
\end{proof}

We now  prove Theorem \ref{main} for spheres with punctures: 

\begin{proof}[Proof of Theorem \ref{main} for $S=S_{0,n}$, $n\ge 5$]
Let $\XX_A$ be the set constructed above, which is rigid and has trivial pointwise stabilizer in $\Mod^{\pm}(S)$, by Theorem \ref{T:sphere-larger}. The set $\{H_\alpha \mid \alpha \in A\}$ generates $\Mod(S)$; see, for instance,  Corollary 4.15 of \cite{Farb-Margalit}. In addition,  $\XX_A \cap H_\alpha(\XX_A)$ is weakly rigid, for all $\alpha \in A$, as it contains $A$ and $H_{\alpha_i}(\alpha_j)$ for any $\alpha_i,\alpha_j$ disjoint from $\alpha$.  Finally, by inspection we see $\Mod(S)\cdot \XX_A = \C(S)$. Therefore, we may apply Proposition \ref{P:finish} to the sets $Y=\XX_A$ and $G= A$ to obtain the desired sequence  $\XX_A= Y_1 \subset Y_2 \subset \ldots \subset Y_n \subset \ldots$ of finite rigid sets.
\end{proof}

\subsection{Proof of Theorem \ref{main} for $S=S_{0,4}$} As mentioned in the introduction, in this case $\C(S)$ is isomorphic to the Farey complex. It is easy to see, and is otherwise explicitly stated in \cite{AL}, that any triangle in $\C(S)$ is rigid. From this, plus the fact that any edge in $\C(S)$ is contained in exactly two triangles, it follows that any subcomplex of $\C(S)$ that is homeomorphic to a disk is also rigid. Consider the dual graph of $\C(S)$ (which is in fact a trivalent tree $T$), equipped with the natural path-metric. Let $Y_1$ be a triangle in $\C(S)$, and define $Y_n$ to be the union of all triangles of $\C(S)$ whose corresponding vertices in $T$ are at distance at most $n$ from the vertex corresponding to $Y_1$. Then the sequence $(Y_n)_{n\in \mathbb N}$ gives the desired exhaustion of $\C(S)$.

\section{Closed and punctured surfaces of genus $g \ge 2$}

In this section we consider the case of a surface $S$ of genus $g\ge 2$ with $n\ge 0$ marked points.  First observe that if $g = 2$ and $n = 0$, then since $\C(S_{2,0}) \cong \C(S_{0,6})$ \cite{Luo}, the main theorem for $S_{2,0}$ follows from the case $S_{0,6}$, already proved in Section~\ref{S:punctured spheres}.   We therefore assume that $n \geq 1$ if $g = 2$.

We let $\XX \subset \C(S)$ denote the finite rigid set constructed in \cite{AL}. The definition of the set $\XX$ is somewhat involved and we will not recall it in full detail.  Instead, we first note that $\XX$ contains the set of {\em chain curves}
\[ \CC = \{\alpha_0^0,\ldots,\alpha_0^n,\alpha_1,\ldots,\alpha_{2g-1} \} \]
depicted in Figure  \ref{F:genus4chain}.
For notational purposes we also write $\alpha_0 = \alpha_0^1$ (and in case $n = 0$, $\alpha_0 = \alpha_0^0$).
In addition to these curves, $\XX$ contains every curve which occurs as the boundary component of a subsurface of $S$ filled by a subset $A \subset \CC$, provided its union is connected in $S$ and has one of the following forms:
\begin{enumerate}
\item $A = \{ \alpha_0^i,\alpha_0^j,\alpha_k \}$ where $0 \leq i \leq j \leq n$ and $k = 1$ or $2g+1$.
\item $A = \{ \alpha_0^i, \alpha_0^j, \alpha_k,\alpha_{k+1} \}$ where $0 \leq i \leq j \leq n$ and $k = 1$ or $2g$.
\item $A = \{ \alpha_i \mid i \in I  \}$ where $I \subset \{0,\ldots,2g+1\}$ is an interval (modulo $2g+2$).  If $n >0$ and $A$ has an odd number of curves, then we additionally require that the first and last numbers in the interval $I$ to be even.
\end{enumerate}
See Figure \ref{F:examples} for some key examples.

\begin{figure}[htb]
\labellist
\small\hair 2pt
 \pinlabel {$\alpha_{2g+1}$} [ ] at 125 10
 \pinlabel {$\alpha_{2g}$} [ ] at 234 57
 \pinlabel {$\alpha_1$} [ ] at 50 35
 \pinlabel {$\alpha_2$} [ ] at 72 60
 \pinlabel {$\alpha_3$} [ ] at 102 35
 \pinlabel {$\alpha_0^2$} [ ] at 15 15
 \pinlabel {$\alpha_0^1$} [ ] at -10 46
 \pinlabel {$\alpha_0^0$} [ ] at 16 88
\endlabellist
\centering
\includegraphics[scale=.8]{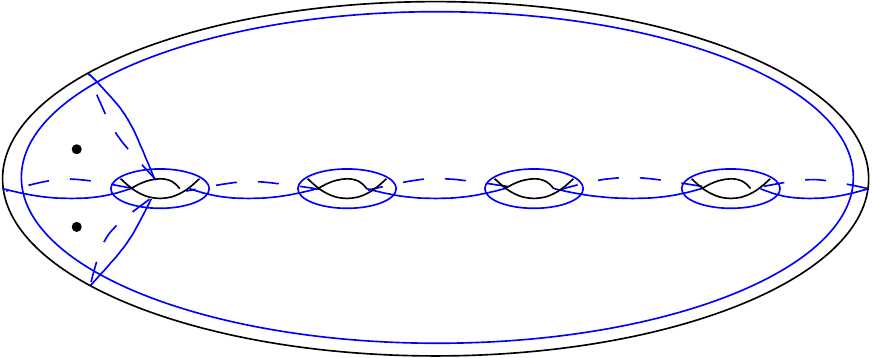}
\caption{Chain curves $\CC$ on a genus $4$ surface with $2$ marked points.} \label{F:genus4chain}
\end{figure}

\begin{figure}[htb]
\labellist
\small\hair 2pt
 \pinlabel {$\beta$} [ ] at 430 25
 \pinlabel {$\epsilon^{1\, 2}$} [ ] at 267 20
 \pinlabel {$\sigma^{2 \, 2}$} [ ] at 85 75
\endlabellist
\begin{center}
\includegraphics[width=5in]{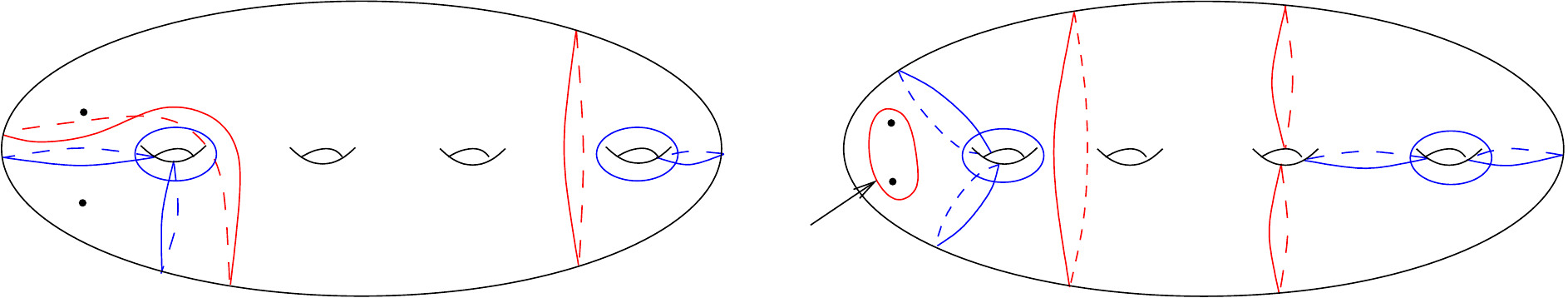} \caption{Examples of subsets of $\CC$ (in blue), together with the boundary components (in red) of the subsurface filled by them.  The red curves are in $\XX$.} \label{F:examples}
\end{center}
\end{figure}

The pointwise stabilizer of $\XX$ in $\Mod^{\pm}(S)$ is trivial. Thus the rigidity of the set $\XX$, established in \cite{AL}, may be rephrased as follows: 

\begin{theorem}[\cite{AL}]
Let $S = S_{g,n}$ with $g \geq 2$ and $n \geq 0$ (and $n \geq 1$ if $g = 2$).  For any locally injective simplicial map $\phi:\XX \to \C(S)$, there exists a unique $h \in \Mod^{\pm}(S)$ such that $h|_{\XX} = \phi$.
\label{t:rigidhigher}
\end{theorem}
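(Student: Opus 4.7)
The plan is to proceed in three stages, paralleling the strategy used for punctured spheres: first extract the combinatorial skeleton of $\XX$ that must be preserved by any locally injective simplicial $\phi$, then upgrade this skeleton to a topological correspondence in $S$, and finally invoke a change-of-coordinates argument to produce the desired mapping class.

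First, I would show that $\phi$ takes the chain $\CC$ to a topologically indistinguishable chain in $\C(S)$. Consecutive curves $\alpha_i,\alpha_{i+1}$ fill a one-holed torus, so they are Farey neighbors, and moreover their intersection is $\XX$-detectable: the remaining $\alpha_j$'s together with the type (1)--(2) boundary curves of subsurfaces filled by $\{\alpha_0^i,\alpha_0^j,\alpha_k\}$ and $\{\alpha_0^i,\alpha_0^j,\alpha_k,\alpha_{k+1}\}$ assemble into explicit pants decompositions realizing Definition~\ref{D:detectable}. Lemma~\ref{L:detectable2detectable} then gives that each $\phi(\alpha_i),\phi(\alpha_{i+1})$ fills a one-holed torus, and by iterating Lemma~\ref{l:fareydetect} along the chain one concludes that $\phi(\CC)$ has the same intersection pattern as $\CC$. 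The auxiliary ``tine'' curves $\alpha_0^0,\ldots,\alpha_0^n$ are handled in the same way, using the boundary data in $\XX$ to certify that each $\phi(\alpha_0^i)$ is disjoint from the rest of the chain and from the other $\phi(\alpha_0^j)$.

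Second, I would use the type (1)--(3) boundary curves in $\XX$ to pin down the topological embedding of $\phi(\CC)$. For any interval $I\subset\{0,\ldots,2g+1\}$, the boundary components of the subsurface filled by $\{\alpha_i\mid i\in I\}$ lie in $\XX$ and, together with their disjointness relations with $\CC$, record the topological type of this subsurface as well as how its boundary sits in $S$. Comparing these boundary curves to their $\phi$-images and using the detectability-based arguments above, one sees that $\phi$ preserves the topological type of every such subsurface. Combined with the tine data, this forces the configuration $(\phi(\CC),\phi(\partial))$ in $S$ to be homeomorphic to $(\CC,\partial)$, including the distribution of marked points among the complementary components.

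Finally, the change-of-coordinates principle produces a mapping class $h\in\Mod^{\pm}(S)$ with $h(\gamma)=\phi(\gamma)$ for every $\gamma\in\CC$. It remains to verify that $h=\phi$ on the remaining boundary curves in $\XX$: each such curve is characterized as a specific boundary component of a subsurface filled by a prescribed subset $A\subset\CC$, and since $\phi$ is locally injective and simplicial it must preserve this characterization, so $\phi$ and $h$ must agree there. Uniqueness of $h$ is immediate from the fact that the pointwise stabilizer of $\XX$ in $\Mod^{\pm}(S)$ is trivial. The main obstacle is the second stage: converting purely combinatorial disjointness and detectability data into a genuine topological identification of subsurfaces. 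The delicate point is ruling out that $\phi$ might ``flip'' one side of the chain in a non-geometric way, and it is exactly this possibility that the carefully chosen type (1)--(3) families of boundary curves are designed to obstruct.
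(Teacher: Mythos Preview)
The paper does not prove this theorem at all: it is quoted from \cite{AL} (the authors' earlier paper) and is simply invoked as a known result, with no argument given here. So there is no ``paper's own proof'' to compare your proposal against.

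That said, your outline is broadly in the spirit of the argument in \cite{AL}, but as written it is a sketch rather than a proof, and the gap you yourself identify is real. The second stage---passing from combinatorial intersection/disjointness data to a genuine topological identification of the image configuration---is the entire content of the theorem, and your proposal does not actually carry it out. In particular, asserting that ``comparing these boundary curves to their $\phi$-images\ldots one sees that $\phi$ preserves the topological type of every such subsurface'' is exactly the statement to be proved; detectability only tells you that $\phi(\alpha_i),\phi(\alpha_{i+1})$ fill a $\xi=1$ subsurface, not whether that subsurface is a one-holed torus or a four-holed sphere, nor how the punctures are distributed. Ruling out the ``flip'' and nailing down the homeomorphism type of each $N(\phi(A))$ requires a careful induction using the specific boundary curves in $\XX$, and that induction is the substance of the proof in \cite{AL}. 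Your proposal names the right ingredients but does not execute the key step.
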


It will be necessary to refer to some of the curves in $\XX$ by name, so we describe the naming convention briefly in those cases, along the lines of \cite{AL}.  We have already described the names of the elements of $\CC$.  For $0 < i < j  \leq n$ we let $\epsilon^{ij}$ be the boundary component of the subsurface $N(\alpha_1 \cup \alpha_0^{i-1} \cup \alpha_0^j)$ that also bounds a $(j-i+1)$--punctured disk in $S$ (containing the $i^{th}$ through $j^{th}$ punctures).  We call the curves $\epsilon^{ij}$ {\em outer curves}; see Figure~\ref{F:examples}.  For $0 < i \leq j \leq n$, we also consider the other boundary component of $N(\alpha_1 \cup \alpha_0^{i-1} \cup \alpha_j)$; this is a separating curve dividing the surface into two (punctured) subsurfaces of genus $1$ and $g-1$ respectively.  We denote this curve $\sigma^{ij}$.  One more curve in $\XX$ that we refer to as $\beta$ is shown in Figure~\ref{F:examples}, and is a component of the boundary of the subsurface $N(\alpha_{2g-2} \cup \alpha_{2g-1} \cup \alpha_{2g})$.

The strategy for proving Theorem \ref{main} for surfaces of genus $g\ge 2$ is similar in spirit to the one for punctured spheres, although considerably more involved. The main idea is to produce successive rigid enlargements of the rigid set $\XX$ identified in \cite{AL}, until we are in a position to apply Proposition \ref{P:finish}. 
We begin by replacing $\XX$ with $\XX'$, which is rigid by Proposition \ref{p:prime}.  For every $0 < j \leq n$, let 
\[ A_j = \{ \sigma^{i j} \mid 0 < i \leq j \} \cup \{ \sigma^{j i} \mid j \leq i \leq n \} \cup \{\alpha_1, \alpha_3, \alpha_4, \alpha_5, \ldots, \alpha_{2g+1} \}.\]
The set $A_j$ is almost filling and uniquely determines a curve denoted $\alpha_1^j$; see Figure~\ref{F:alpha1j}.  The naming is suggestive, as all $\alpha_1^j$ are homotopic to $\alpha_1$ upon filling in the punctures.

\begin{figure}[htb]
\labellist
\small\hair 2pt
 \pinlabel {$\alpha_1^2$} [ ] at 390 75
\endlabellist
\begin{center}
\includegraphics[width=5in]{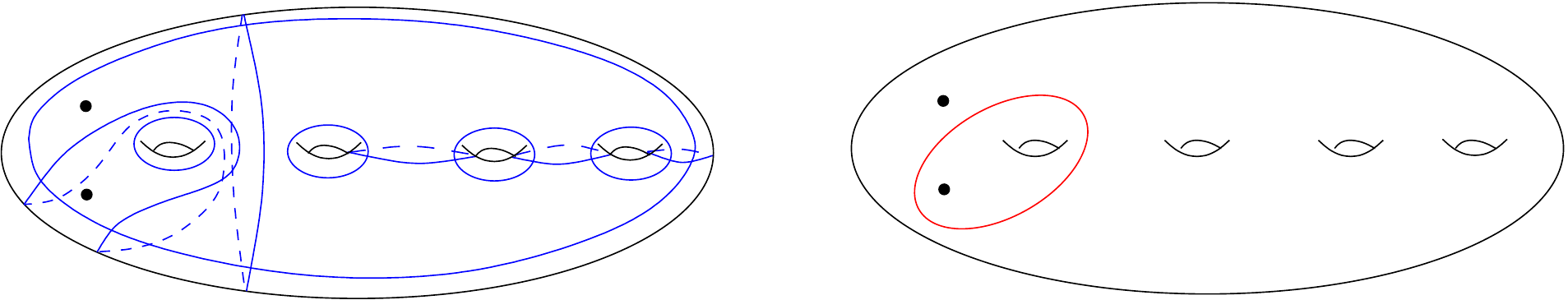} \caption{The surface on the left contains the set $A_2$ which uniquely determines $\alpha_1^2$.} \label{F:alpha1j}
\end{center}
\end{figure}

We can similarly find a subset $A_0$ (shown in the left of Figure~\ref{F:alpha10}) which is almost filling and uniquely determines a curve denoted $\alpha_1^0$ (shown on the right of Figure~\ref{F:alpha10}), which bounds a disk enclosing every puncture of $S$.  Consequently, $\alpha_1^j \in \XX'$, for all $j = 0,\ldots,n$, 

\begin{figure}[htb]
\labellist
\small\hair 2pt
 \pinlabel {$\alpha_1^0$} [ ] at 390 75
\endlabellist
\begin{center}
\includegraphics[width=5in]{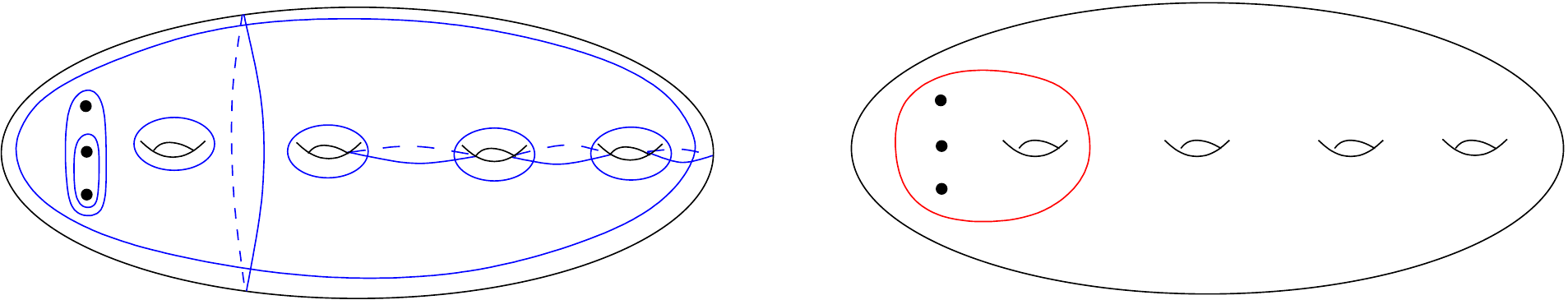} \caption{The curves $A_0 \subset \XX$ (left) and the curve $\alpha_1^0 \in \XX'$ (right).} \label{F:alpha10}
\end{center}
\end{figure}

\subsection{Punctured surface promotion}

One issue that arises only in the case $n > 0$ is that for intervals $I \subset \{0,\ldots,2g+1 \}$ (modulo $2g+2$) of odd length, the boundary curves of the neighborhood of the subsurface filled by $A= \{ \alpha_i \mid i \in I\}$ are only contained in $\XX$ when $I$ starts and ends with even indexed curves.  Passing to the set $\XX'$ allows us to easily enlarge further to a set which rectifies this problem. 

Specifically, we define $\XX_1$ to be the union of $\XX'$ together with boundary components of subsurfaces filled by sets $A = \{ \alpha_i,\alpha_{i+1},\ldots,\alpha_j \}$ where $0 < i \leq j \leq 2g-1$ and $i,j$ are both odd.  See Figure \ref{F:Aodd} for examples. 
Let $\BB_o$ be the set of all curves defined by such sets $A$.  

Before we proceed, we describe this set in more detail. 
Cutting $S$ open along $\alpha_1 \cup \alpha_3 \cup \ldots \cup \alpha_{2g-1} \cup \alpha_{2g+1}$ we obtain two components $\Theta_o^+$ and $\Theta_o^-$.  These are each spheres with holes: $\Theta_o^+$ is the sphere in ``front'' in Figure~\ref{F:genus4chain}, which is a $(g+n+1)$--holed sphere containing the $n$ punctures of $S$, while $\Theta_o^-$ is the $(g+1)$--holed sphere in the ``back'' in Figure~\ref{F:genus4chain}. For every $A = \{ \alpha_i,\alpha_{i+1},\ldots,\alpha_j \}$ where $0 < i < j \leq 2g-1$ and $i,j$ are both odd, the boundary of the subsurface filled by $A$ has exactly two components $\beta_A^\pm$ with $\beta_A^+ \subset \Theta_o^+$ and $\beta_A^- \subset \Theta_o^-$ (possibly peripheral in $\Theta_o^\pm$ depending on $A$).  Furthermore, for every such set $A$, there is a ``complementary'' set $A' \subset \XX$ so that $A \cup A'$ is almost filling, and so that  $\{\beta_A^\pm\}$ is the set  determined by $A \cup A'$.  See Figure~\ref{F:Aodd}.

\begin{figure}[htb]
\labellist
\small\hair 2pt
 \pinlabel {$\beta_A^+$} [ ] at 390 65
 \pinlabel {$\beta_A^-$}  [ ] at 400 93
 \pinlabel {$A$} [ ] at 120 73
 \pinlabel {$A'$} [ ] at 132 27
\endlabellist
\begin{center}
\includegraphics[width=5in]{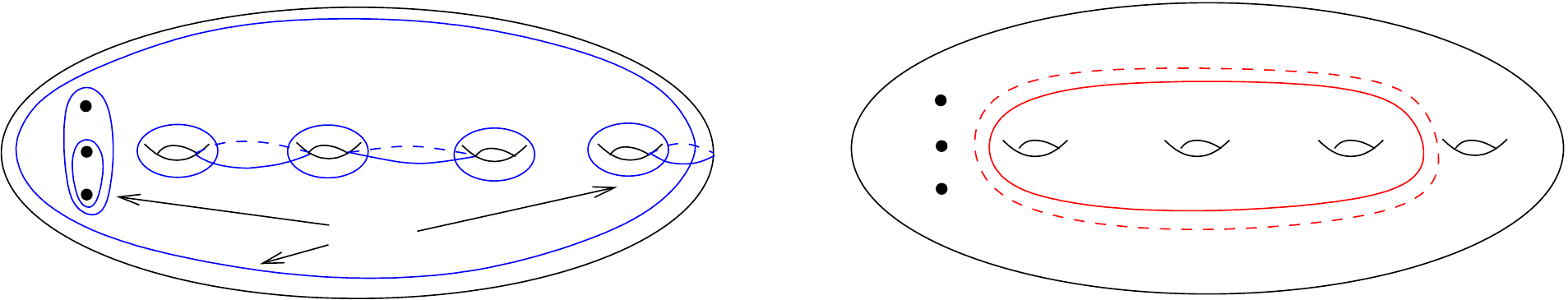} \caption{The sets $A = \{ \alpha_1,\ldots,\alpha_5 \} \subset \CC$ and $A' \subset \XX$ (left) and the curves $\beta_A^{\pm} \in \XX'$ determined by $A \cup A'$ (right).} \label{F:Aodd}
\end{center}
\end{figure}

\begin{lemma}
For all $g \geq 2$ and $n \geq 1$, the set $\XX_1$ is rigid and has trivial pointwise stabilizer in $\Mod^{\pm}(S_{g,n})$.
\label{L:X1}
\end{lemma}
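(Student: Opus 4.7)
The plan is to bootstrap from the rigidity of $\XX'$, which holds by Proposition~\ref{p:prime} since $\XX \subset \XX'$, and whose pointwise stabilizer is trivial because $\XX$'s is. Given a locally injective simplicial $\phi \colon \XX_1 \to \C(S)$, I therefore obtain a unique $h \in \Mod^{\pm}(S)$ with $\phi|_{\XX'} = h|_{\XX'}$; replacing $\phi$ by $h^{-1} \circ \phi$, I may assume $\phi|_{\XX'} = \mathrm{id}$. It then suffices to prove that $\phi$ fixes every curve in $\BB_o = \XX_1 \setminus \XX'$, since this yields $\phi|_{\XX_1} = \mathrm{id}$, which is induced by $h$; triviality of the pointwise stabilizer of $\XX_1$ follows at once from $\XX \subset \XX_1$.

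Fix an admissible chain $A = \{\alpha_i, \ldots, \alpha_j\}$ with $i,j$ odd, and the complementary set $A' \subset \XX$ for which $A \cup A'$ is almost filling with determined set $\{\beta_A^+, \beta_A^-\}$.  Since $\phi$ fixes $A \cup A'$ pointwise, simpliciality forces each $\phi(\beta_A^{\pm})$ to be disjoint from $A \cup A'$.  The star of $\beta_A^+$ in $\XX_1$ contains $A \cup A' \cup \{\beta_A^-\}$ (each of whose curves is disjoint from $\beta_A^+$), so local injectivity forces $\phi(\beta_A^+) \neq \phi(\beta_A^-)$ and $\phi(\beta_A^{\pm}) \notin A \cup A'$.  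Combined, these give $\{\phi(\beta_A^+), \phi(\beta_A^-)\} = \{\beta_A^+, \beta_A^-\}$, so $\phi$ either fixes or swaps the pair.

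To rule out the swap, I would exhibit, for each such $A$, a curve $\gamma_A \in \XX'$ whose intersection numbers with $\beta_A^+$ and $\beta_A^-$ differ: simpliciality and $\phi(\gamma_A) = \gamma_A$ would then force $\phi(\beta_A^+) = \beta_A^+$.  The source of asymmetry is that $\Theta_o^+$ contains the $n$ punctures of $S$ while $\Theta_o^-$ does not, so $\XX'$ already contains many curves confined to $\Theta_o^+$---for instance the outer curves $\epsilon^{mm'}$, the $\alpha_0^k$, and the $\alpha_1^j$---each automatically disjoint from $\beta_A^- \subset \Theta_o^-$.  It remains to verify that for each admissible $A$ at least one such curve genuinely intersects $\beta_A^+$: when $\alpha_1 \in A$ the curve $\alpha_0^0 \in \XX$ crosses $\alpha_1$ and so registers nonzero intersection with the front-side boundary $\beta_A^+$; for chains supported away from $\alpha_1$ I would use a nearby chain curve $\alpha_k$ with $k$ just outside $\{i, \ldots, j\}$, or one of the separating curves $\sigma^{mm'}$ whose placement in $\Theta_o^+$ forces nonzero intersection with $\beta_A^+$ alone.

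The main technical point will be the uniform verification of a suitable $\gamma_A$ for every admissible $A$: chains near the punctured end of $S$ are straightforward, but chains supported toward the ``back'' of the chain complex require a careful inspection to identify a curve in $\XX'$ meeting exactly one side of $N(A)$.  Once this finite case analysis is completed, the two preceding steps combine to give $\phi|_{\XX_1} = \mathrm{id}$, establishing rigidity of $\XX_1$; triviality of the pointwise stabilizer is immediate as already noted.
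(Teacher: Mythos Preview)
Your setup is exactly the paper's: reduce to $\phi|_{\XX'}=\mathrm{id}$, observe that $A\cup A'$ almost fills with determined pair $\{\beta_A^+,\beta_A^-\}$, and conclude that $\phi$ either fixes or swaps each such pair.  The divergence is in how you break the swap.

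Your plan is to produce, \emph{for every} admissible $A$, a curve $\gamma_A\in\XX'$ with $i(\gamma_A,\beta_A^+)\neq 0=i(\gamma_A,\beta_A^-)$.  For $A$ containing $\alpha_1$ this works (and is what the paper does for the single base case $A=\{\alpha_1,\alpha_2,\alpha_3\}$).  For $A$ not containing $\alpha_1$, however, both of your proposed candidates fail.  A chain curve $\alpha_k$ with $k=i-1$ or $k=j+1$ is even-indexed, so it crosses both $\alpha_{k-1}$ and $\alpha_{k+1}$; after cutting along the odd-indexed curves it contributes an arc in each of $\Theta_o^+$ and $\Theta_o^-$ running from the inside of $\beta_A^\pm$ to the outside, hence it meets \emph{both} $\beta_A^+$ and $\beta_A^-$.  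The curves $\sigma^{mm'}$ (and likewise $\epsilon^{mm'}$, $\alpha_0^k$, $\alpha_1^j$) all cut off a subsurface containing $\alpha_1$ and the punctures but none of $\alpha_3,\ldots,\alpha_{2g-1}$, so when $A\subset\{\alpha_3,\ldots,\alpha_{2g-1}\}$ they are disjoint from $N(A)$ and hence from \emph{both} $\beta_A^\pm$.  In short, the asymmetry between $\Theta_o^+$ and $\Theta_o^-$ that you want to exploit lives entirely near $\alpha_1$, and there is no evident curve in $\XX'$ that detects it out near a chain like $\{\alpha_3,\alpha_4,\alpha_5\}$.

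The paper avoids this by a connectivity argument \emph{inside} $\XX_1$ rather than $\XX'$.  Having pinned down $\phi(\beta_{A_0}^\pm)=\beta_{A_0}^\pm$ for $A_0=\{\alpha_1,\alpha_2,\alpha_3\}$, one observes that for an ``adjacent'' chain $A'$ (sharing an odd endpoint with $A_0$ but not nested), the curves $\beta_{A_0}^-$ and $\beta_{A'}^+$ lie in different spheres $\Theta_o^\mp$ and are therefore disjoint, while $\beta_{A_0}^-$ and $\beta_{A'}^-$ are linked in $\Theta_o^-$ and intersect.  Simpliciality then forces $\phi(\beta_{A'}^+)=\beta_{A'}^+$, and one propagates across all admissible $A$.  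The distinguishing curve for a general $A$ thus comes from $\XX_1\setminus\XX'$, not from $\XX'$; this is the missing idea in your proposal.
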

\begin{proof} First, $\XX_1$ has trivial pointwise stabilizer since $\XX$ does. 
Given any locally injective simplicial map $\phi \colon \XX_1 \to \C(S)$, there exists a unique $h \in \Mod^{\pm}(S)$ so that $\phi = h|_{\XX'}$, by Theorem \ref{t:rigidhigher} and Proposition~\ref{p:prime}.  Composing with the inverse of $h$ if necessary, we can assume $\phi$ is the identity on $\XX'$. So we need only show that $\phi(\gamma) = \gamma$ for all $\gamma \in \XX_1 - \XX'$.   With respect to the notation above, any such curve is $\beta_A^{\pm}$ for  $A = \{ \alpha_i,\alpha_{i+1},\ldots,\alpha_j \}$, where $0 < i \leq j \leq 2g-1$ and $i,j$ are both odd. Since $A\cup A'$ is almost filling,  $\phi(\{\beta_A^{\pm}\}) = \{\beta_A^{\pm}\}$. Now, for $A=\{\alpha_1,\alpha_2, \alpha_3\}$, we have $i(\beta_A^+, \alpha_0^1) \ne 0$ and $i(\beta_A^-, \alpha_0^1) = 0$; here, $\alpha_0^1$ is the curve depicted in Figure \ref{F:alpha10}. Therefore $\phi(\beta_A^+) =\beta_A^+$, as $\phi$ is locally injective and simplicial. Finally, an easy connectivity argument involving the set of curves $\{\beta_A^{\pm}\}_A$ yields the desired result. 
\end{proof}

\subsection{Half the proof and the case of one or fewer punctures.}

We now enlarge the set $\XX_1 \subset \C(S)$ from Lemma \ref{L:X1} to $\XX_1^2=(\XX_1')' \subset \C(S)$.  According to Proposition~\ref{p:prime}, $\XX_1^2$ is rigid, and since the pointwise stabilizer of $\XX$ is trivial, so is the pointwise stabilizer of $\XX_1^2$. We will need the following lemma, see Figure~\ref{F:genus4chain} for the labeling of the curves:

\begin{lemma}
For any $g \geq 2$ and $n \geq 0$ (with $n \geq 1$ if $g = 2$), we have $T_{\alpha_{2g}}(\alpha_{2g-1}) \in \XX_1^2$.
\label{L:determine_chain}
\end{lemma}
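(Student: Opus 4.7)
The goal is to exhibit an almost filling subset $A \subset \XX_1'$ which uniquely determines $T_{\alpha_{2g}}(\alpha_{2g-1})$, so that by Proposition~\ref{p:prime} this curve lies in $\XX_1^2 = (\XX_1')'$.

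The curves $\alpha_{2g-1}$ and $\alpha_{2g}$ are twistable (toroidal) Farey neighbors filling a one-holed torus $N = N(\alpha_{2g-1} \cup \alpha_{2g}) \subset S$, and by Proposition~\ref{P:twistable_Farey_neighbors} the two curves $T_{\alpha_{2g}}^{\pm 1}(\alpha_{2g-1})$ are the only common Farey neighbors of $\alpha_{2g-1}$ and $\alpha_{2g}$. Both are essential simple closed curves contained in $N$. Every other curve in $\XX_1$ either lies in the complement $S \setminus N$ (in which case it is automatically disjoint from both of $T_{\alpha_{2g}}^{\pm 1}(\alpha_{2g-1})$) or is one of $\alpha_{2g-1}, \alpha_{2g}$ themselves, whose arcs in $N$ are symmetric under the reflection of $N$ interchanging $T_{\alpha_{2g}}(\alpha_{2g-1})$ with $T_{\alpha_{2g}}^{-1}(\alpha_{2g-1})$. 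Consequently, no subset of $\XX_1$ distinguishes these two curves.

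To break the symmetry, I would first use the prime operation on $\XX_1$ to produce an asymmetric auxiliary curve $\gamma \in \XX_1'$ whose arcs in $N$ are parallel (in the Farey sense) to $T_{\alpha_{2g}}(\alpha_{2g-1})$ but not to $T_{\alpha_{2g}}^{-1}(\alpha_{2g-1})$. A natural candidate is a boundary curve of a three-curve chain neighborhood like $N(\alpha_{2g-1} \cup \alpha_{2g} \cup \alpha_{2g+1})$, obtained as the unique curve disjoint from an almost filling set in $\XX_1$ built from chain curves and from boundary curves such as $\beta$. The second step is then to take
\[ A \; = \; \bigl(\CC \setminus \{\alpha_{2g-1},\alpha_{2g}\}\bigr) \; \cup \; \{\beta\} \; \cup \; \{\text{auxiliary } \sigma^{ij},\epsilon^{ij}\text{ curves}\} \; \cup \; \{\gamma\} \; \subset \; \XX_1', \]
and to verify that $A$ is almost filling and that $T_{\alpha_{2g}}(\alpha_{2g-1})$ is its unique disjoint curve. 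The first three pieces force any disjoint curve to lie in $N$, while the last piece $\gamma$ excludes every simple closed curve in $N$ other than $T_{\alpha_{2g}}(\alpha_{2g-1})$.

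The hard part will be exhibiting the asymmetric curve $\gamma$ and verifying that it is uniquely determined by an almost filling set in $\XX_1$. Both the interval-conditions in the construction of $\XX$ (which rule out direct access to boundary curves of odd-length chains involving $\alpha_{2g+1}$ for $n>0$) and the symmetric nature of most boundary curves in $\XX_1$ conspire to make this step delicate. Once $\gamma$ is in place, the almost-filling verification for $A$ reduces to a careful but routine check that the arc slopes of $\gamma, \alpha_{2g+1}$, and the remaining chain curves in $N$ force the only remaining disjoint curve to be $T_{\alpha_{2g}}(\alpha_{2g-1})$ rather than its reflected partner.
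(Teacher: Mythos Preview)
Your overall strategy matches the paper's: produce an auxiliary curve $\gamma \in \XX_1'$ that breaks the symmetry between $T_{\alpha_{2g}}(\alpha_{2g-1})$ and $T_{\alpha_{2g}}^{-1}(\alpha_{2g-1})$, and then use it in an almost filling set in $\XX_1'$ to uniquely determine the former. The paper carries this out via explicit pictures, treating $g\ge 3$ and $g=2$ separately (the latter requires an additional intermediate step).

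There is, however, a genuine gap in your proposed choice of $\gamma$. A boundary component of $N(\alpha_{2g-1}\cup\alpha_{2g}\cup\alpha_{2g+1})$ is disjoint from $\alpha_{2g-1}$ and $\alpha_{2g}$, hence lies entirely outside the one-holed torus $N=N(\alpha_{2g-1}\cup\alpha_{2g})$. Such a curve is therefore disjoint from \emph{both} $T_{\alpha_{2g}}(\alpha_{2g-1})$ and $T_{\alpha_{2g}}^{-1}(\alpha_{2g-1})$, and cannot distinguish them by disjointness. More generally, every curve in $\XX_1$ that meets $N$ does so in arcs parallel to $\alpha_{2g-1}$ or to $\alpha_{2g}$ (e.g.\ $\alpha_{2g-2}$, $\alpha_{2g+1}$, $\beta$), never with slope $\pm 1$; and all boundary curves of chain subsurfaces are invariant under the involution exchanging the two candidates. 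So no ``boundary of a chain neighborhood'' will do the job---you need to manufacture, via the prime operation, a curve whose arcs in $N$ genuinely have the diagonal slope. The paper exhibits such a curve pictorially; you have correctly identified this as the hard part, but your concrete candidate does not work and you should expect the actual $\gamma$ to look rather different from anything of the form $\partial N(A)$ with $A\subset\CC$.

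You should also anticipate a case distinction: when $g=2$ the surface is too small for the $g\ge 3$ construction to go through directly, and the paper needs an extra application of the prime operation to reach the asymmetric curve.
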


\begin{proof}
This requires a series of pictures, slightly different for the case $g \geq 3$ and for $g = 2$.  

\smallskip

\noindent{\bf Case 1: $g \geq 3$.} We refer the reader to Figure~\ref{F:chain_g>2}: although we have only drawn the figures for $g=3$ and $n=2$, it is straightforward to extend them to all $g \geq 3$ and $n \geq 0$.
The upper left figure shows an almost filling set of curves contained in $\XX_1$, determining uniquely the curve on the upper right figure, which is thus in $\XX_1'$. This curve is then used to produce an almost filling set, depicted on the lower left hand figure, that uniquely determines $T_{\alpha_{2g}}(\alpha_{2g-1})$, shown on the right. Thus we see that $T_{\alpha_{2g}}(\alpha_{2g-1})\in \XX^2_1$, as claimed. 

\smallskip

\noindent{\bf Case 2: $g=2$ and $n\ge 1$.} In this case a different set of pictures is required, see Figure \ref{F:2n_chain_chain}. The upper left hand figure shows an almost filling set of curves that is contained in $\XX_1$ and uniquely determines the curve shown on the upper right. This curve is then used to produce an almost filling set, depicted in the middle left picture, which is contained in $\XX'_1$ and uniquely determines the curve in the middle right figure. We now make use of this new curve to produce an almost filling set (lower left)  that is contained in $\XX_1^2$ and uniquely determines $T_{\alpha_{2g}}(\alpha_{2g-1})$ (lower right). 
\end{proof}

\begin{figure}[htb]
\begin{center}
\includegraphics[width=5in]{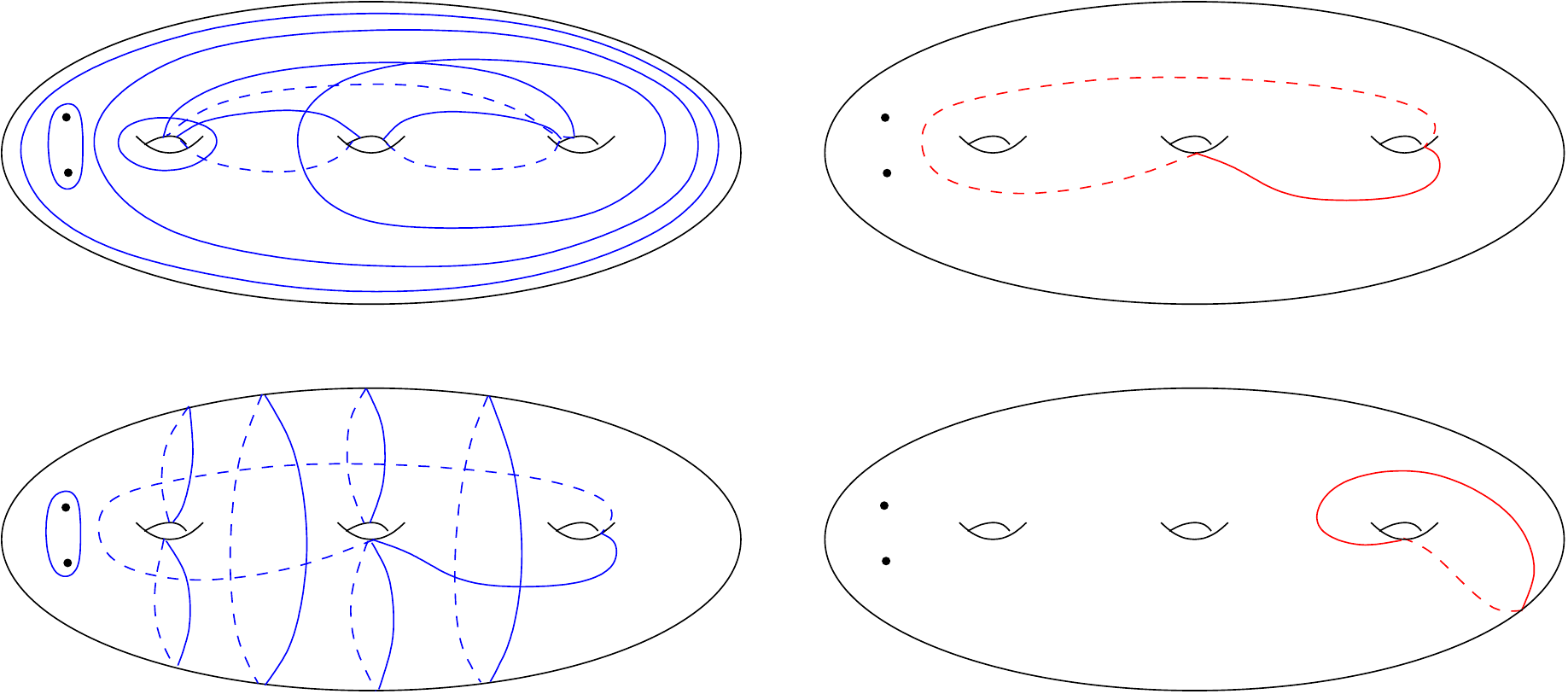} \caption{Illustrating $T_{\alpha_{2g}}(\alpha_{2g-1})$ in $\XX_1^2$, when $g=3$. The almost filling set on the left (blue) uniquely determines the curve in the right figure (red).}\label{F:chain_g>2}
\end{center}
\end{figure}

\begin{figure}[htb]
\begin{center}
\includegraphics[width=5in]{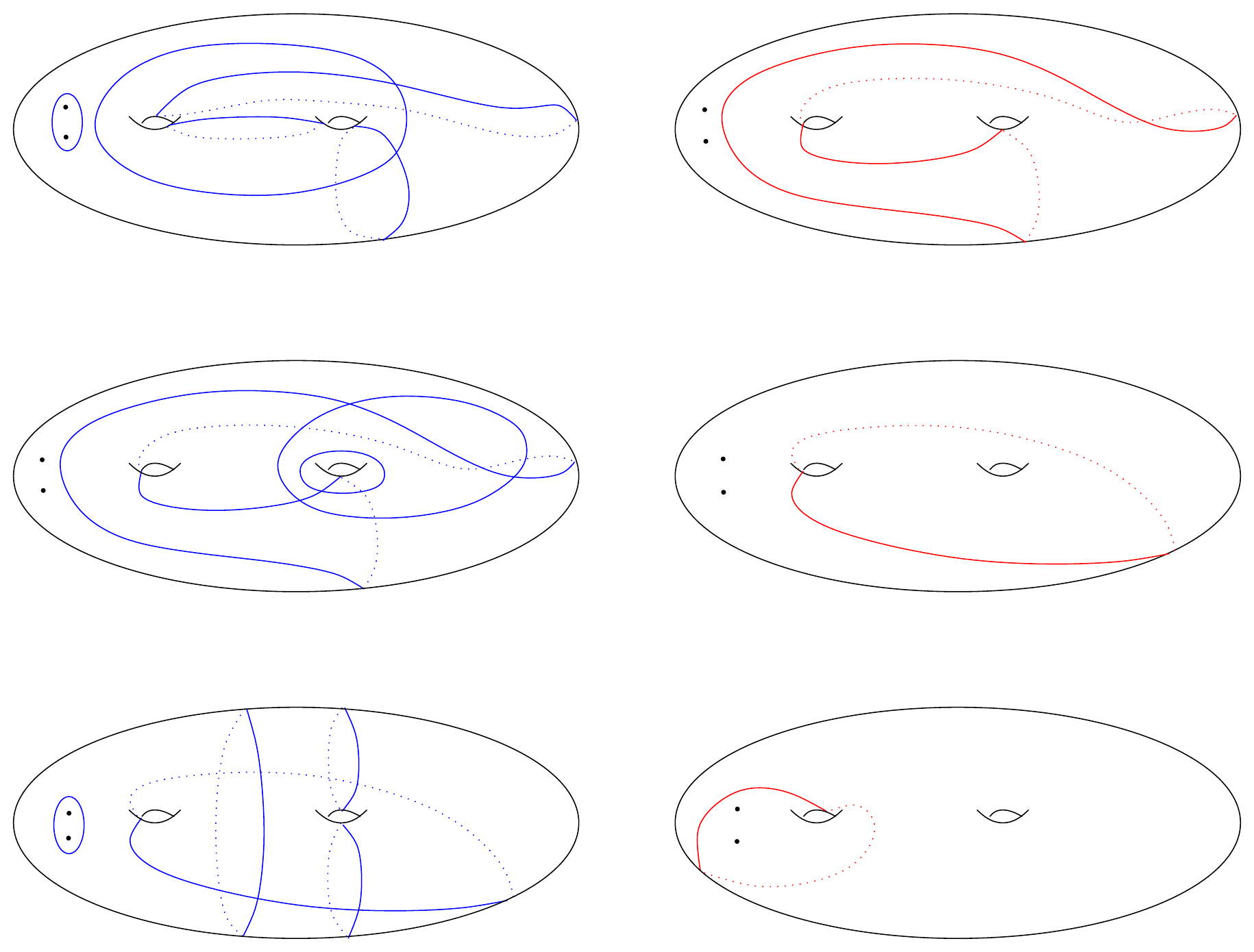} \caption{Illustrating $T_{\alpha_{2g}}(\alpha_{2g-1})$ in $\XX_1^2$, when $g=2$ and $n\ge 1$. The almost filling set on the left (blue) uniquely determines the curve in the right figure (red).}\label{F:2n_chain_chain}
\end{center}
\end{figure}

We claim that the set $\XX_1^2 \cup T_\CC(\CC)$ is rigid. More concretely: 

\begin{lemma}
Let $\phi: \XX_1^2 \cup T_\CC(\CC)\to \C(S)$ be a locally injective simplicial map. Then there exists a unique $h\in \Mod^{\pm}(S)$ such that $h|_{\XX_1^2 \cup T_\CC(\CC)} =\phi$.
\label{L:enlarge1}
\end{lemma}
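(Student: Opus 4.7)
The plan is as follows. Since $\XX_1^2$ is rigid (Proposition~\ref{p:prime} applied to $\XX_1$, which is rigid by Lemma~\ref{L:X1}) and has trivial pointwise stabilizer in $\Mod^\pm(S)$ (as it contains $\XX$), there is a unique $h \in \Mod^\pm(S)$ with $h|_{\XX_1^2} = \phi|_{\XX_1^2}$. Replacing $\phi$ by $h^{-1}\circ\phi$, I reduce to the case $\phi|_{\XX_1^2} = \mathrm{id}$, and need only show that $\phi$ then fixes every element of $T_\CC(\CC)$; uniqueness of $h$ will follow automatically from the triviality of the pointwise stabilizer.

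A Dehn twist $T_\alpha$ fixes every curve disjoint from $\alpha$, so the only curves in $T_\CC(\CC)\setminus \CC$ are of the form $T_\alpha(\beta)$ for intersecting pairs $\alpha,\beta \in \CC$, i.e.\ for consecutive pairs in the chain (the pairs $(\alpha_i,\alpha_{i+1})$ with $1 \leq i \leq 2g$, together with $(\alpha_{2g+1},\alpha_0^j)$ and $(\alpha_0^j,\alpha_1)$ for $j = 0,\ldots,n$). For each such pair, $N(\alpha\cup\beta)$ is a one-holed torus, so $\alpha,\beta$ are toroidal twistable Farey neighbors with $f_\alpha = T_\alpha$ and $f_\beta = T_\beta$. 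Proposition~\ref{P:twistable_Farey_neighbors} then gives $T_\alpha(\beta) = T_\beta^{-1}(\alpha)$ and $T_\alpha^{-1}(\beta) = T_\beta(\alpha)$, so $T_\CC(\CC)$ consists precisely of both common Farey neighbors of each consecutive chain pair, together with $\CC$ itself.

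I would next verify that for each $j \in \{0,\ldots,n\}$ the cyclic sequence $A_j = (\alpha_1,\alpha_2,\ldots,\alpha_{2g+1},\alpha_0^j)$ is a closed string of twistable Farey neighbors in $\XX_1^2$ in the sense of Definition~\ref{D:strings}: conditions (1) and (3) are immediate from the chain structure; condition (4) follows from inspection of the tori $N(\alpha_i\cup\alpha_{i+1})$, each adjacent curve meeting exactly one boundary component; and condition (2) is witnessed by pants decompositions of the complement of $N(\alpha_i\cup\alpha_{i+1})$ lying inside $\XX$, available by construction of $\XX$. With these verifications, $\XX_1^2 \cup T_\CC(\CC) = \bigcup_{j} (\XX_1^2)_{A_j}$, and Proposition~\ref{P:closed_string} implies that the restriction of $\phi$ to each $(\XX_1^2)_{A_j}$ is either the identity or the involution $\sigma_j$ exchanging $T_\alpha(\beta) \leftrightarrow T_\alpha^{-1}(\beta)$ along $A_j$. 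Since the strings $A_j$ all contain the common subchain $\alpha_1,\ldots,\alpha_{2g+1}$ with the same associated twisting pairs, these choices must be consistent, so either $\phi$ is the identity on $T_\CC(\CC)$ or $\phi$ is a single globally defined involution $\sigma$.

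The involution $\sigma$ is ruled out by Lemma~\ref{L:determine_chain}: the curve $T_{\alpha_{2g}}(\alpha_{2g-1})$ already lies in $\XX_1^2$, so $\phi$ fixes it, while $\sigma$ would send it to $T_{\alpha_{2g}}^{-1}(\alpha_{2g-1}) \neq T_{\alpha_{2g}}(\alpha_{2g-1})$. Hence $\phi = \mathrm{id}$ on $T_\CC(\CC)$, and the mapping class $h$ from the first paragraph realizes $\phi$ on all of $\XX_1^2 \cup T_\CC(\CC)$. The main technical obstacle is the careful verification of condition (2) of Definition~\ref{D:strings} for each consecutive pair in each $A_j$, requiring the explicit identification of pants decompositions inside $\XX$, together with the bookkeeping needed to see that the involutions $\sigma_j$ from distinct strings glue to a single global involution $\sigma$.
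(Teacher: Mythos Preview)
Your proposal is correct and follows essentially the same approach as the paper: reduce to $\phi|_{\XX_1^2}=\mathrm{id}$, recognize each $\CC_i=\{\alpha_0^i,\alpha_1,\ldots,\alpha_{2g+1}\}$ as a closed string of twistable Farey neighbors in $\XX_1^2$, invoke Proposition~\ref{P:closed_string}, and then use Lemma~\ref{L:determine_chain} to eliminate the possible involution.  The only organizational difference is that the paper kills the involution $\sigma_i$ for each string $\CC_i$ separately (since $T_{\alpha_{2g}}(\alpha_{2g-1})\in\XX_1^2$ lies in every $(\XX_1^2)_{\CC_i}$, this works immediately for each $i$), thereby proving each $\XX_1^2\cup T_{\CC_i}(\CC_i)$ is rigid, and then assembles these rigid sets via Lemma~\ref{L:glue}; your intermediate step of gluing the $\sigma_j$ into a single global involution before eliminating it is valid but unnecessary.
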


\begin{proof}
Let $\phi: \XX_1^2 \cup T_\CC(\CC)$ be a locally injective simplicial map.  Since $\XX_1^2$ is rigid and its pointwise stabilizer in $\Mod^{\pm}(S)$ is trivial, there exists a unique $h\in \Mod^{\pm}(S)$ such that $h|_{\XX_1^2} =\phi|_{\XX_1^2} $. Precomposing $\phi$ with $h^{-1}$, we may assume that in fact $\phi|_{\XX_1^2} $ is the identity map.

For $i=0, \ldots, n$, $\CC_i=\{\alpha_0^i, \alpha_1, \ldots, \alpha_{2g+1}\}$ is a closed string of twistable Farey neighbors in $\XX_1^2$ (the fact that the nonzero intersection numbers between these curves is $\XX$--detectable, hence $\XX_1^2$--detectable, is shown in the proofs of Theorem 5.1 and 6.1 in \cite{AL}). Consider the set $\XX_1^2 \cup T_{\CC_i}(\CC_i)$, and observe that,  in the terminology of Definition \ref{D:strings}, it equals $Y_A$ for $Y= \XX_1^2$ and $A = \CC_i$. By Proposition \ref{P:closed_string}, $$\phi(\XX_1^2 \cup T_{\CC_i}(\CC_i)) = \XX_1^2 \cup T_{\CC_i}(\CC_i);$$ moreover, the automorphism group of $\XX_1^2 \cup T_{\CC_i}(\CC_i)$ fixing $\XX_1^2$ pointwise has order at most two. But, by Lemma \ref{L:determine_chain}, $T_{\alpha_{2g}}(\alpha_{2g-1}) \in \XX_1^2$ and thus such group is trivial. In other words, we have shown that the set $\XX_1^2 \cup T_{\CC_i}(\CC_i)$ is rigid. 

Now, $\XX_1^2 \cup T_{\CC_0}(\CC_0) \cup T_{\CC_1}(\CC_1)$ is also rigid by Lemma \ref{L:glue}, since $(\XX_1^2 \cup T_{\CC_0}(\CC_0) ) \cap (\XX_1^2 \cup T_{\CC_1}(\CC_1))$
is weakly rigid as it contains $\XX_1^2$. 
Since $ T_\CC(\CC) = \bigcup_{i=0}^n T_{\CC_i}(\CC_i)$, we may repeat essentially this same argument $n-1$ more times to conclude $\XX_1^2 \cup T_{\CC}(\CC)$ is rigid, as required.
\end{proof}

Next, we provide a further enlargement of our rigid set. Let $\beta$ be the curve depicted in Figure \ref{F:examples},  which is one of the boundary components of the surface $N(\alpha_{2g-2} \cup \alpha_{2g-1} \cup \alpha_{2g})$. We claim:

\begin{lemma}
The set $\XX_1^2 \cup T_\CC(\CC) \cup T_\beta(\CC)$ is rigid. 
\label{L:enlarge2}
\end{lemma}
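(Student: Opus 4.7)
The plan is to parallel the argument in Lemma~\ref{L:enlarge1}. Set $Y = \XX_1^2 \cup T_\CC(\CC)$, which is rigid by Lemma~\ref{L:enlarge1} and has trivial pointwise stabilizer in $\Mod^{\pm}(S)$ because $\XX \subset Y$. Given a locally injective simplicial map $\phi \colon Y \cup T_\beta(\CC) \to \C(S)$, the rigidity of $Y$ yields a unique $h \in \Mod^{\pm}(S)$ with $h|_Y = \phi|_Y$; after replacing $\phi$ by $h^{-1} \circ \phi$, we may assume $\phi$ is the identity on $Y$. Since $T_\beta$ fixes every curve disjoint from $\beta$, the only elements of $T_\beta(\CC) \setminus Y$ that warrant attention are the images $T_\beta(\alpha)$ for those chain curves $\alpha \in \CC$ essentially intersecting $\beta$. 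Inspection of $\beta$ as a boundary component of $N(\alpha_{2g-2}\cup\alpha_{2g-1}\cup\alpha_{2g})$ shows that the set of such chain curves is contained in the small set $\{\alpha_{2g-3}, \alpha_{2g+1}\}$, as every other chain curve is disjoint from this subsurface.

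For each such new curve $\gamma := T_\beta(\alpha)$, the strategy is to exhibit an almost filling subset $A_\gamma \subset Y$ which uniquely determines $\gamma$. Granted such a set, an argument patterned on the proof of Lemma~\ref{determine} shows that $\phi(\gamma)$ is adjacent in $\C(S)$ to every vertex of $\phi(A_\gamma) = A_\gamma$, yet is distinct from each by local injectivity of $\phi$ on the star of $\gamma$; this forces $\phi(\gamma) = \gamma$. The construction of $A_\gamma$ exploits the fact that $T_\beta$ is supported in a thin annular neighborhood of $\beta$, so $\gamma$ coincides with $\alpha$ outside this annulus and winds along $\beta$ inside it. A natural candidate for $A_\gamma$ includes $\beta$ together with chain curves in $\CC \setminus \{\alpha\}$ lying in the complement of this annulus, augmented by outer curves $\epsilon^{ij}$, separating curves $\sigma^{ij}$, and selected elements of $T_\CC(\CC)$ that collectively pin down $\gamma$ as the unique essential simple closed curve disjoint from all of $A_\gamma$.

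The main obstacle is the explicit verification that these candidate sets are almost filling and uniquely determine the corresponding $\gamma$; this hinges on a careful topological analysis of the position of $\beta$ relative to $\CC$, and may require distinguishing the case $g=2,\, n\geq 1$ from the case $g \geq 3$, as in the proof of Lemma~\ref{L:determine_chain}. Once $\phi$ is shown to fix both $T_\beta(\alpha_{2g-3})$ and $T_\beta(\alpha_{2g+1})$, we conclude that $\phi$ is the identity on the entire set $\XX_1^2 \cup T_\CC(\CC) \cup T_\beta(\CC)$, and hence agrees with the original mapping class $h$ there, establishing rigidity.
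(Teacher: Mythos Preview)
Your overall framework --- reducing to $\phi|_Y = \mathrm{id}$ on $Y = \XX_1^2 \cup T_\CC(\CC)$ and then pinning down the remaining curves $T_\beta(\alpha)$ --- matches the paper's setup, but the method you propose for the second step is different from the paper's and leaves the decisive work undone.

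The paper does \emph{not} attempt to realize each $T_\beta(\alpha)$ as the curve uniquely determined by an almost filling subset of $Y$. Instead it observes that
\[
A = \{\alpha_{2g},\,\alpha_{2g-1},\,\alpha_{2g-2},\,\beta,\,\alpha_{2g+1}\}
\]
is a closed string of twistable Farey neighbors in $\XX_1^2$, and applies Proposition~\ref{P:closed_string} with the rigid set $Y$ in place of the ambient set. One checks that $Y_A = Y \cup T_\beta(\CC)$, and since several of the curves $f_{\alpha_i}^{\pm 1}(\alpha_j)$ arising from consecutive pairs in $A$ (for instance $T_{\alpha_{2g}}^{\pm 1}(\alpha_{2g-1})$) already lie in $T_\CC(\CC) \subset Y$, the possible order-two automorphism $\sigma$ from Proposition~\ref{P:closed_string} is forced to be trivial. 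This is a one-line invocation of machinery built precisely for this purpose in Section~\ref{s:3}.

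Your proposal, by contrast, rests entirely on producing almost filling sets $A_\gamma \subset Y$ uniquely determining each $\gamma = T_\beta(\alpha)$. You explicitly flag this as ``the main obstacle'' and do not construct them. That is a genuine gap, and not merely a missing routine verification: the curves $T_\beta(\alpha)$ and $T_\beta^{-1}(\alpha)$ are the two common Farey neighbors of $\alpha$ and $\beta$, and a curve in $Y$ disjoint from both $\alpha$ and $\beta$ is automatically disjoint from \emph{both} of them. So any almost filling set built from such curves cannot distinguish $T_\beta(\alpha)$ from $T_\beta^{-1}(\alpha)$; one needs an additional asymmetry-breaking curve in $Y$, and you have not identified one. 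The closed-string argument handles exactly this issue by propagating a known asymmetry (coming from $T_\CC(\CC)$) along the string, which is why the paper's route is both shorter and complete.
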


\begin{proof}
Let $\phi: \XX_1^2 \cup T_\CC(\CC) \cup T_\beta(\CC) \to \C(S)$ be a locally injective simplicial map. By Lemma \ref{L:enlarge1}, $\XX_1^2 \cup T_\CC(\CC)$ is rigid and thus, up to precomposing $\phi$ with an element of $\Mod^{\pm}(S)$, we may assume that $\phi|_{\XX_1^2 \cup T_\CC(\CC)}$ is the identity. The set $$A= \{\alpha_{2g}, \alpha_{2g-1}, \alpha_{2g-2}, \beta, \alpha_{2g+1}\}\subset \XX_1^2$$ is a closed string of twistable Farey neighbors in $\XX_1^2$ (again, detectability of the nonzero intersection numbers is shown in \cite{AL}).  Therefore, we may apply Proposition \ref{P:closed_string} to $\XX=  \XX_1^2 \cup T_\CC(\CC)$ and $A$ to deduce that $\phi(\XX_A) = \XX_A$; observe that $\XX_A = \XX_1^2 \cup T_\CC(\CC) \cup T_\beta(\CC)$. Moreover, the automorphism group of $\XX_A$ fixing $\XX$ pointwise is trivial, by Lemma \ref{L:enlarge1}, and thus the result follows. 
\end{proof}

\begin{proof}[Proof of Theorem~\ref{main} for $g \geq 2$ and $n \leq 1$]  Let $Y = \XX_1^2 \cup T_{\CC}(\CC) \cup T_{\beta}(\CC)$.
When $S$ is closed or has one puncture, the Dehn twists about chain curves and the Dehn twist about the curve $\beta$  generate $\Mod(S)$; see, for example, Corollary 4.15 of \cite{Farb-Margalit}.  For $\gamma \in \CC \cup \{ \beta \}$, the set
\[ T_\gamma(Y) \cap (Y) \]
contains $\CC$, together with $T_\alpha(\alpha')$ for any $\alpha,\alpha' \in \CC$ which are disjoint from $\gamma$.  In particular, this set is weakly rigid.  By inspection, the $\Mod(S)$--orbit of $Y$ is all of $\C(S)$, and so by Proposition~\ref{P:finish}, this set suffices to prove the theorem.
\end{proof}

\subsection{Multiple punctures.}
When $S_{g,n}$ has $n \geq 2$ (and $g \geq 2$), the twists in the curves $\CC$ and $\{\beta\}$ do not generate the entire mapping class group.  In this case, one needs to add the set of half-twists about the outer curves $\epsilon^{i \, i+1}$ bounding twice-punctured disks; see again Corollary 4.15 of \cite{Farb-Margalit}. Because of this, and in the light of Proposition~\ref{P:finish}, when $n \geq 2$ we would like to enlarge our rigid set from the previous subsection by adding half-twists of chain curves about outer curves $\epsilon^{i \, i+1}$. In fact, denoting this set of outer curves by $\mathfrak O_P = \{\epsilon^{i \, i+1}\}_{i=1}^{n-1}$ we shall show that these curves are already in $\XX_1^2$.  Specifically, we prove

\begin{lemma}
We have $H_{\mathfrak O_P}(\CC) \subset \XX_1^2$. 
\label{L:enlarge3}
\end{lemma}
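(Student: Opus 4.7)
My plan is a careful case analysis of the action of half-twists on chain curves. The half-twist $H_\epsilon$ for $\epsilon = \epsilon^{i\, i+1}$ is supported in the twice-punctured disk $D_i \subset S$ bounded by $\epsilon$, so it acts trivially on any chain curve which does not separate the two interior punctures. Among the curves of $\CC = \{\alpha_0^0,\ldots,\alpha_0^n,\alpha_1,\ldots,\alpha_{2g-1}\}$, the only candidate separating these two punctures is $\alpha_0^i$ (with the standard convention that $\alpha_0^j$ separates the first $j$ punctures from the remaining $n-j$). Thus for every $\gamma \in \CC$ other than $\alpha_0^i$, we have $H_\epsilon(\gamma) = \gamma$, and the corresponding element of $H_{\OO_P}(\CC)$ already lies in $\CC \subset \XX \subset \XX_1^2$; nothing is to be proved in these cases.

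The heart of the lemma is therefore to establish $H_{\epsilon^{i\, i+1}}(\alpha_0^i) \in \XX_1^2$ for each $1 \leq i \leq n-1$. The curve $\alpha_0^i$ meets $\epsilon^{i\, i+1}$ in exactly two points; the half-twist replaces the arc $\alpha_0^i \cap D_i$ by the unique other simple arc with the same endpoints separating the two interior punctures, yielding a curve that agrees with $\alpha_0^i$ outside $D_i$ but has the opposite crossing inside.

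To show $H_\epsilon(\alpha_0^i) \in \XX_1^2 = (\XX_1')'$, for each $i$ I would exhibit an explicit almost filling set $B_i \subset \XX_1'$ whose uniquely determined curve is $H_\epsilon(\alpha_0^i)$, and invoke Lemma~\ref{determine}. A natural candidate $B_i$ combines the outer curve $\epsilon^{i\, i+1} \in \XX$, the remaining chain curves and outer curves which together bound and fill the complementary portions of $S$ away from $D_i$, and selected auxiliary curves $\alpha_1^j \in \XX_1'$ of Figure~\ref{F:alpha1j}. The picture I have in mind shows that the complement of $B_i$ in $S$ consists only of disks, once-punctured disks, and a single annular region whose essential core is $H_\epsilon(\alpha_0^i)$.

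The principal obstacle is the symmetry-breaking step: since $\alpha_0^i$ and $H_\epsilon(\alpha_0^i)$ differ only inside $D_i$, any curve disjoint from $D_i$ is simultaneously disjoint from both and therefore cannot distinguish them. To break the symmetry, $B_i$ must include curves that enter $D_i$ and intersect exactly one of the two candidate arcs. The auxiliary curves $\alpha_1^j \in \XX_1'$, produced by the almost filling sets $A_j$, pass through the puncture region in a prescribed ``sided'' manner and fit this role for appropriate choices of $j$ (for instance $j = i$ and $j = i+1$). Verifying that these curves, combined with the other pieces of $B_i$, indeed uniquely determine $H_\epsilon(\alpha_0^i)$ is the technical crux and would be carried out via a sequence of diagrams in the style of Figure~\ref{F:chain_g>2}.
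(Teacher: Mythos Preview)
Your reduction and overall strategy are exactly those of the paper: the only nontrivial case is $\alpha = \alpha_0^i$, and one then exhibits an almost filling set in $\XX_1'$ uniquely determining $H_{\epsilon^{i\,i+1}}(\alpha_0^i)$, so that Lemma~\ref{determine} places this curve in $\XX_1^2$. The paper likewise proceeds via a two-step sequence of pictures in the style you anticipate.

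The only point of divergence is the choice of symmetry-breaking auxiliary curve. The paper does not use the curves $\alpha_1^j$ directly; instead it first produces a new curve $\gamma_1 \in \XX_1'$ (determined by an almost filling set in $\XX' \subset \XX_1$, drawn in a figure explicitly flagged as analogous to the $\alpha_1^j$ construction), and then uses $\gamma_1$ together with curves of $\XX_1$ as the almost filling set pinning down $H_{\epsilon^{i\,i+1}}(\alpha_0^i)$. So the mechanism is the one you describe, but the specific curve playing the role of your $\alpha_1^j$ is a close cousin rather than $\alpha_1^j$ itself. Whether $\alpha_1^i$ and $\alpha_1^{i+1}$ alone suffice to single out $H_\epsilon(\alpha_0^i)$ (rather than, say, $H_\epsilon^{-1}(\alpha_0^i)$) is exactly the verification you defer to diagrams; the paper sidesteps this by constructing $\gamma_1$ tailored to the job.
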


\begin{proof}  If $\alpha \in \CC$ and $\epsilon^{j \, j+1} \in \mathfrak O_P$, then we must show that $H_{\epsilon^{j \, j+1}}(\alpha) \in \XX_1^2$ for each $j = 1,\ldots, n-1$.  This is clear if $i(\alpha,\epsilon^{j \, j+1}) = 0$, since then $H_{\epsilon^{j \, j+1}}(\alpha) = \alpha$.  The intersection number is nonzero only when $\alpha = \alpha_0^j$, so it suffices to consider only this case.

To prove $H_{\epsilon^{j \, j+1}}(\alpha_0^j) \in \XX_1^2$, we need only exhibit the almost filling sets from $\XX_1'$ uniquely determining this curve.  This in turn requires an almost filling set from $\XX_1$.  As before, we provide the necessary curves in a sequence of two figures.
First, the almost filling set on the left of Figure \ref{F:half1} is contained in $\XX'$, and hence $\XX_1$ (compare with Figure \ref{F:alpha1j}), and uniquely determines the curve $\gamma_1$ depicted on the right of the same figure.  Therefore, $\gamma_1 \in \XX_1'$.  Figure \ref{F:half2} is then an almost filling set in $\XX_1'$, and uniquely determines the curve on the right of the same figure.  This curve is $H_{\epsilon^{j \, j+1}}(\alpha_0^j)$, and so completes the proof.
\end{proof}

\begin{figure}[htb]
\begin{center}
\includegraphics[width=5in]{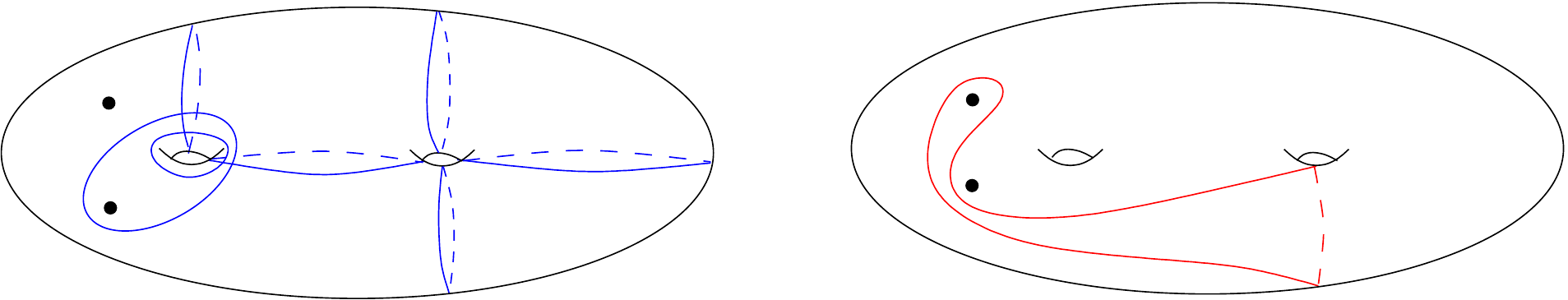} \caption{Determining the curve $\gamma_1$.}\label{F:half1}
\end{center}
\end{figure}

\begin{figure}[htb]
\begin{center}
\includegraphics[width=5in]{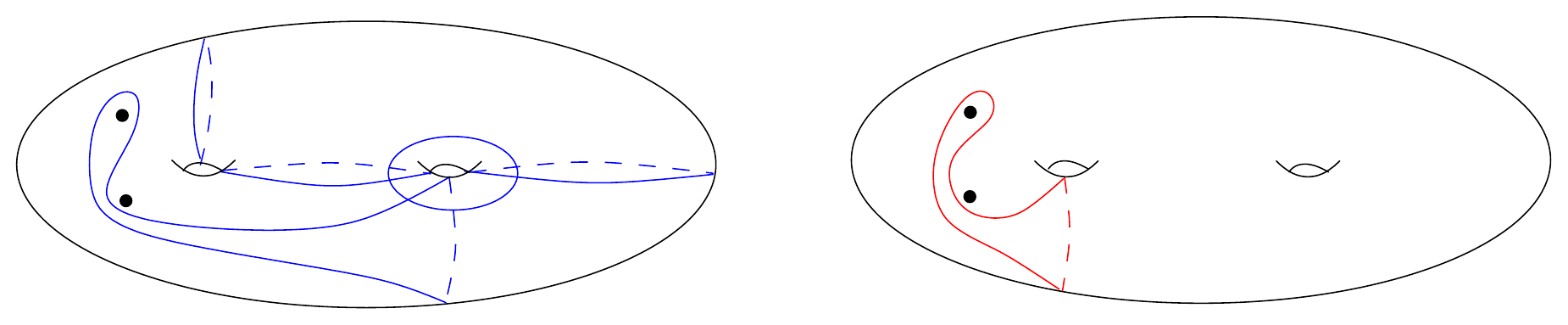} \caption{Determining the curve $H_{\epsilon^{i,i+1}}(\alpha_0^i)$.}\label{F:half2}
\end{center}
\end{figure}

We are finally in a position to prove Theorem \ref{main} for surfaces of genus $g\ge 2$ and $n \geq 2$:

\begin{proof}[Proof of Theorem \ref{main} for $S=S_{g,n}$, $g\ge 2$, $n \geq 2$.]
The set $Y= \XX_1^2 \cup T_\CC(\CC) \cup T_\beta(\CC)$ is rigid by Lemma \ref{L:enlarge3}, and has trivial pointwise stabilizer in $\Mod^{\pm}(S)$ since $\XX$ does.  Moreover $\Mod(S) \cdot Y = \C(S)$, by inspection.  Consider the subset $ G = \CC \cup \{\beta\} \cup \mathfrak O_P$; as mentioned before, the (half) twists about elements of  $G$ generate $\Mod(S)$.  In addition, for every $\alpha \in  G$, $Y \cap f_\alpha(Y)$ is weakly rigid.  Thus we can apply Proposition \ref{P:finish} to $Y$ and $ G$, hence obtaining the desired exhaustion of $\C(S)$.  
\end{proof}

\section{Tori}

In this section we will prove Therorem \ref{main} for $S=S_{1,n}$, for $n\ge 0$. First, if $n \le 1$ then $\C(S)$ is isomorphic to the Farey complex, and thus the result follows as in the case of $S_{0,4}$; see Section \ref{S:punctured spheres}. For $n=2$, Theorem \ref{main} is not true as stated due to the existence of {\em non-geometric} automorphisms of $\C(S)$, as mentioned in the introduction. However, in the light of the isomorphism $\C(S_{0,5}) \cong \C(S_{1,2})$ \cite{Luo}, the same statement holds after replacing the group $\Mod^{\pm}(S)$ by $\Aut(\C(S))$ in the definition of rigid set, by the results of Section \ref{S:punctured spheres}.

Therefore, from now on we assume $n\ge 3$.  In \cite{AL}, we constructed a finite rigid set $\XX$ described as follows.  View $S_{1,n}$ as a unit square with $n$ punctures along the horizonal midline and the sides identified.  The set $\XX$ contains a subset $\CC \subset \XX$ of $n+1$ {\em chain curves}
\[ \CC = \{ \alpha_1,\ldots,\alpha_n\} \cup \{\beta\} \]
where $\alpha_1,\ldots,\alpha_n$ are distinct curves which appear as vertical lines in the square and $\beta$ is the curve which appears as a horizontal line; see Figure \ref{F:original torus curves}.  We assume that the indices on the $\alpha_i$ are ordered cyclically around the torus, and that the punctures are labelled so that the $i^{th}$ puncture lies between $\alpha_i$ and $\alpha_{i+1}$.  The boundaries of the subsurfaces filled by connected unions of these chain curves form a collection of curves, denoted $\OO$ which we refer to as {\em outer curves}.   Then $$\XX = \CC \cup \OO.$$  This set has a nontrivial pointwise stabilizer in $ \Mod^\pm(S_{1,n})$, which can be realized as the (descent to $S_{1,n}$ of the) horizontal reflection of the square through the midline containing the punctures.  Denoting this involution $r \colon S_{1,n} \to S_{1,n}$, we can summarize the result of \cite{AL} in the following:

\begin{figure}[htb]
\begin{center}
\includegraphics[width=4in]{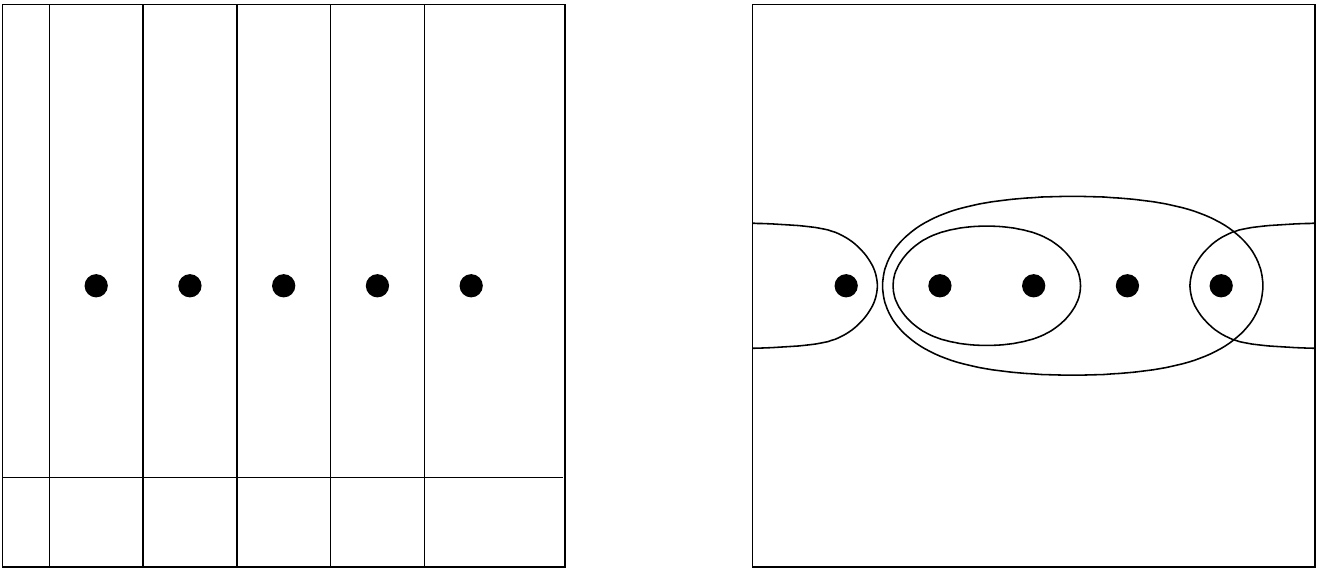} \caption{Chain curves on the left, and some examples of outer curves on the right, in $S_{1,5}$.}\label{F:original torus curves}
\end{center}
\end{figure}

\begin{theorem} \cite{AL}
For any locally injective simplicial map $\phi \colon \XX \to \C(S_{1,n})$ there exists $h \in \Mod^\pm(S_{1,n})$ such that $h|_{\XX} = \phi$.  Moreover, $h$ is unique up to precomposing with $r$.
\label{T:AL_torus}
\end{theorem}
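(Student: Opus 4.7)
The plan is to recognize the combinatorial features of $\XX$ that must be preserved by any locally injective simplicial map, then reconstruct the mapping class up to the ambiguity provided by $r$. First I would identify $\CC$ inside $\XX$ combinatorially: $\beta$ is the unique curve in $\XX$ intersecting every other chain curve, and $\{\alpha_1,\ldots,\alpha_n\}$ is a maximal pairwise disjoint subset of $\XX \setminus \{\beta\}$ consisting of nonseparating curves. Since $\phi$ is simplicial and locally injective, these adjacency and nonadjacency relations are preserved, so $\phi(\CC)$ must be a chain in $\phi(\XX)$ of the same combinatorial type.

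Next I would upgrade this combinatorial match to a topological one using detectable intersections. For each $i$, I expect that outer curves in $\OO$ together with the remaining chain curves produce pants decompositions in $\XX$ witnessing that $i(\beta,\alpha_i) \neq 0$ is $\XX$-detectable; moreover, one can find $\alpha,\alpha' \in \OO$ such that the quadruple $(\alpha,\beta,\alpha_i,\alpha')$ satisfies the hypotheses of Lemma \ref{l:fareydetect}. Combining Lemmas \ref{L:detectable2detectable} and \ref{l:fareydetect}, $\phi(\beta)$ and $\phi(\alpha_i)$ are Farey neighbors filling a $\xi=1$ subsurface, and the intersection number $1$ forces this subsurface to be a one-holed torus rather than a four-holed sphere. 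Thus the topological type of each adjacent chain pair is preserved.

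I would then recover the cyclic arrangement of the punctures from the outer curves bounding twice-punctured disks: each such curve in $\OO$ is disjoint from all chain curves except a specific small collection determined by which two punctures it encloses, and comparing these collections across $\OO$ pins down the cyclic order of the punctures along the horizontal midline up to the orientation reversal realized by $r$. Having recognized the chain together with its cyclic puncture data (uniquely up to $r$), I would produce $h \in \Mod^\pm(S_{1,n})$ matching $\phi$ on $\CC$, and verify agreement on each remaining outer curve of $\OO$ using the fact that each such curve is determined as a boundary component of the subsurface filled by a specific sub-chain.

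The main obstacle, I expect, is establishing detectability uniformly enough to apply Lemma \ref{l:fareydetect} in every configuration, and ruling out exotic combinatorial symmetries of $\XX$ beyond $r$. The latter amounts to showing that any automorphism of $\XX$ fixing $\CC$ pointwise fixes every outer curve, which follows from the characterization of outer curves as boundaries of sub-chain subsurfaces, but must be checked carefully in the small sub-chain cases where multiple outer curves may share some of their defining data.
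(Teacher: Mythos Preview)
This theorem is not proved in the present paper: it is quoted from \cite{AL} (note the citation immediately after \verb|\begin{theorem}|), and no proof appears here. So there is no ``paper's own proof'' to compare your proposal against; the paper simply invokes the result as input to the construction of $\XX_1$ in Proposition~\ref{P:torus_large}.

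That said, your outline is in the spirit of the techniques from \cite{AL} that this paper summarizes in Section~2 (detectable intersections, Lemma~\ref{L:detectable2detectable}, and the Farey-neighbor recognition Lemma~\ref{l:fareydetect}), and the overall architecture---pin down $\phi$ on $\CC$ up to a mapping class using detectability and Farey-neighbor arguments, then extend to $\OO$ because outer curves are determined as boundaries of sub-chain subsurfaces---is the expected one. A couple of points in your sketch would need sharpening if you actually carried it out: your combinatorial characterization of $\beta$ as ``the unique curve in $\XX$ intersecting every other chain curve'' is circular (you are trying to identify $\CC$, so you cannot yet speak of ``chain curves''), and the step ``produce $h$ matching $\phi$ on $\CC$'' hides real work, since one must show that the image chain $\phi(\CC)$ is in the $\Mod^\pm(S_{1,n})$-orbit of $\CC$, not merely combinatorially isomorphic to it. But these are exactly the kinds of details the original \cite{AL} argument addresses, and for the purposes of this paper the theorem is treated as a black box.
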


The strategy of proof is again similar to that of previous sections, although the technicalities are different, and boils down to producing an enlargement of the set $\XX$ so that Proposition \ref{P:finish} can be applied. 

We begin by enlarging the set $\XX$ as follows.  We let $\delta_i$ be the curve coming from the vertical line through the $i^{th}$ puncture in the square.  For every $1 \leq i \leq n$, let $\beta_i^+$ be the curve obtained from $\beta$ by pushing it up over the $i^{th}$ puncture.  More precisely, we consider the point-pushing homeomorphism $f_i \colon S_{1,n} \to S_{1,n}$ that pushes the $i^{th}$ puncture up and around $\delta_i$, and then let $\beta_i^+ = f_i(\beta)$.   We similarly define $\beta_i^- = f_i^{-1}(\beta)$, and set $\beta_{i,i+1}^\pm = f_{i+1}^{\pm 1}f_i^{\pm 1}(\beta)$, where the subscripts are taken modulo $n$.  See Figure~\ref{F:torus beta curves}.

\begin{figure}[htb]
\begin{center}
\includegraphics[width=1.8in]{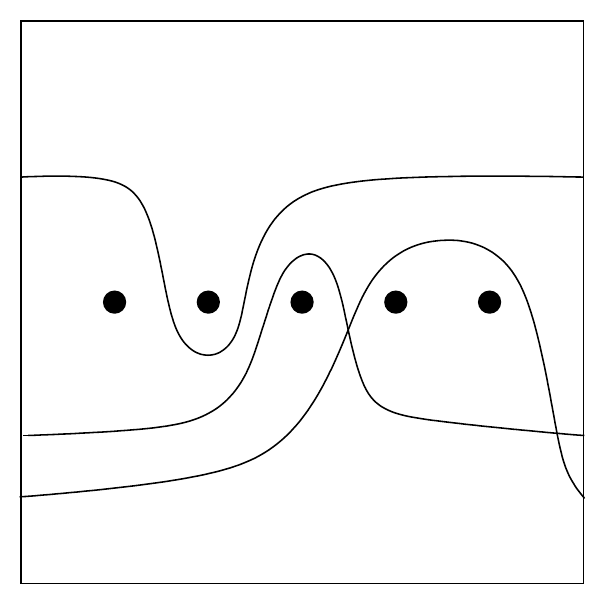} \caption{Curves $\beta_2^-$, $\beta_3^+$, and $\beta_{4,5}^+$ on $S_{1,5}$.}\label{F:torus beta curves}
\end{center}
\end{figure}

Let
\[ \XX_1 = \XX \cup \{\beta_i^\pm \mid 1 \leq i \leq n \} \cup \{ \beta_{i,i+1}^\pm \mid 1 \leq i \leq n\} \]
with indices in the last set taken modulo $n$.  We first prove that this set is rigid; since the poinwise stabilizer of $\XX_1$ in $\Mod^{\pm}(S_{1,n})$ is trivial, this amounts to the following: 

\begin{proposition}  For any locally injective simplicial map $\phi \colon \XX_1 \to \C(S_{1,n})$, there exists a unique $h \in \Mod^\pm(S_{1,n})$ so that $h|_{\XX_1} = \phi$.
\label{P:torus_large}
\end{proposition}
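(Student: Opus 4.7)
The plan is to bootstrap Theorem~\ref{T:AL_torus} by identifying each added curve of $\XX_1 \setminus \XX$ up to the reflection symmetry $r$, and then to coordinate the residual $\mathbb{Z}/2$ sign ambiguities via the mixed curves $\beta_{i,i+1}^\pm$.

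First, Theorem~\ref{T:AL_torus} applied to $\phi|_\XX$ produces $h \in \Mod^\pm(S_{1,n})$, unique up to precomposition with $r$, with $h|_\XX = \phi|_\XX$. After replacing $\phi$ with $h^{-1}\circ\phi$ we may assume $\phi|_\XX = \mathrm{id}_\XX$, and the goal becomes to show $\phi|_{\XX_1} \in \{\mathrm{id}_{\XX_1},\, r|_{\XX_1}\}$. Note that the two push-offs $\delta_i^\pm$ of $\delta_i$ are isotopic to $\alpha_{i+1}$ and $\alpha_i$ respectively, so the point-push $f_i$ factors as $f_i = T_{\alpha_{i+1}}T_{\alpha_i}^{-1}$, and in particular its action on $\XX$ is already pinned down by $\phi|_\XX$. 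For each $i$, we next show that $\phi$ preserves the unordered pair $\{\beta_i^+, \beta_i^-\}$ (and similarly $\{\beta_{i,i+1}^+, \beta_{i,i+1}^-\}$): the pair $\{\beta_i^+, \beta_i^-\}$ is characterized within $\C(S_{1,n})$ by its intersection/disjointness pattern against the chain and outer curves of $\XX$, together with Farey-neighbor data drawn from a suitable $\xi=1$ auxiliary subsurface around puncture $i$ (produced by filling the relevant outer curves of $\XX$), via Lemma~\ref{determine} and, where applicable, Proposition~\ref{P:strings_prop1}(3). This yields
\[
\phi(\{\beta_i^+,\beta_i^-\}) = \{\beta_i^+,\beta_i^-\}, \qquad \phi(\{\beta_{i,i+1}^+,\beta_{i,i+1}^-\}) = \{\beta_{i,i+1}^+,\beta_{i,i+1}^-\}.
\]

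Writing $\phi(\beta_i^+) = \beta_i^{\epsilon_i}$ and $\phi(\beta_{i,i+1}^+) = \beta_{i,i+1}^{\eta_i}$ with $\epsilon_i,\eta_i \in \{+,-\}$, we coordinate the signs using that $\beta_i^+$ is disjoint from $\beta_{i,i+1}^+$ (the ``$+$'' modifications at puncture $i$ are parallel), whereas $\beta_i^-$ intersects $\beta_{i,i+1}^+$ nontrivially (opposite-direction modifications at puncture $i$ wind oppositely around that puncture and so must cross). Since $\phi$ is simplicial, preservation of disjointness forces $\epsilon_i = \eta_i$, and the analogous observation at puncture $i+1$ gives $\eta_i = \epsilon_{i+1}$. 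Iterating cyclically around $i = 1,\ldots,n$ (mod $n$) forces a common sign $\epsilon \in \{+,-\}$: if $\epsilon = +$ then $\phi|_{\XX_1} = \mathrm{id}_{\XX_1}$, realized by $h = \mathrm{id}$; if $\epsilon = -$ then $\phi|_{\XX_1} = r|_{\XX_1}$, realized by $h = r$. Uniqueness of the realising mapping class is automatic, since $r$ swaps $\beta_i^+ \leftrightarrow \beta_i^-$, so the pointwise stabilizer of $\XX_1$ in $\Mod^\pm(S_{1,n})$ is trivial.

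The principal technical obstacle will be Step 2: since $\beta$ and $\beta_i^\pm$ fill a $\xi = 2$ subsurface (rather than $\xi = 1$), Proposition~\ref{P:strings_prop1} cannot be invoked directly, and one must instead combine an almost-filling argument---exhibiting explicit pants-decomposition configurations in $\XX$ which force $\phi(\beta_i^+) \in \{\beta_i^+, \beta_i^-\}$---with Farey-neighbor data supported in a carefully chosen auxiliary $\xi=1$ subsurface containing puncture $i$. This step is picture-by-picture and depends on the local combinatorics of chain and outer curves in $\XX$ near each puncture.
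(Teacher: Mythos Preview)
Your overall architecture---reduce to $\phi|_\XX=\mathrm{id}$ via Theorem~\ref{T:AL_torus}, show each pair $\{\beta_i^+,\beta_i^-\}$ is preserved, then coordinate the $\pm$ ambiguities---matches the paper. But the heart of the argument, your Step~2, is not actually carried out; you yourself flag it as the ``principal technical obstacle'' and leave it as ``picture-by-picture''. The tools you name do not fit: Lemma~\ref{determine} concerns a curve \emph{uniquely} determined by an almost-filling set, not a pair, and Proposition~\ref{P:strings_prop1}(3) requires a closed string of twistable Farey neighbors, which the $\beta_i^\pm$ do not form (they neither bound twice-punctured disks nor intersect $\beta$ once). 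So as written there is a genuine gap.

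The paper fills this gap with a concrete lemma (Lemma~\ref{L:quads_satisfying_Lfareydetect}): the outer curves $\epsilon^{i+1\,i}$ and $\epsilon^{i\,i-1}$ in $\XX$ (each enclosing all punctures but one) are shown to be the \emph{unique common Farey neighbors} of $\beta_i^+$ and $\beta_i^-$, by exhibiting for each $i$ explicit quadruples in $\XX_1$ satisfying the hypotheses of Lemma~\ref{l:fareydetect}, with all relevant intersections $\XX_1$--detectable. Since $\phi$ fixes these outer curves, it must permute $\{\beta_i^+,\beta_i^-\}$. The paper's sign coordination is then simpler than yours: it uses $i(\beta_i^+,\beta_j^-)=0$ versus $i(\beta_i^+,\beta_j^+)=2$ directly among the $\beta_i^\pm$, so no prior control of $\beta_{i,i+1}^\pm$ is needed. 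Finally, the $\beta_{i,i+1}^\pm$ are handled \emph{after} the $\beta_j^\pm$ are pinned down, by observing that each $\beta_{i,i+1}^+$ is uniquely determined by the almost-filling set $\{\beta,\beta_1^-,\ldots,\beta_n^-\}\setminus\{\beta_i^-,\beta_{i+1}^-\}$ and invoking Proposition~\ref{p:prime}. This ordering---first $\beta_i^\pm$ via Farey neighbors in $\XX$, then $\beta_{i,i+1}^\pm$ via unique determination---avoids the circularity in your approach of needing the pairs $\{\beta_{i,i+1}^\pm\}$ preserved before you can coordinate the signs of the $\beta_i^\pm$.
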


The proof of this proposition will require a repeated application of Lemma \ref{l:fareydetect}, and as such, we must verify that certain quadruples of curves satisfy the hypotheses of that lemma.  We will need to refer to the outer curves by name.  To this end, note that since any outer curve surrounds a set of (cyclically) consecutive punctures, we can determine an outer curve by specifying the first and last puncture surrounded.  Consequently, we let $\epsilon^{i \, j}$ denote the outer curve surrounding all punctures from the $i^{th}$ to the $j^{th}$, with all indices taken modulo $n$.   
Observe that since the set of punctures is cyclically ordered, we do not need to assume that $i<j$ in the definition of $\epsilon^{i \, j}$. 
We will need the following lemma:

\begin{figure}[htb]
\begin{center}
\includegraphics[width=4in]{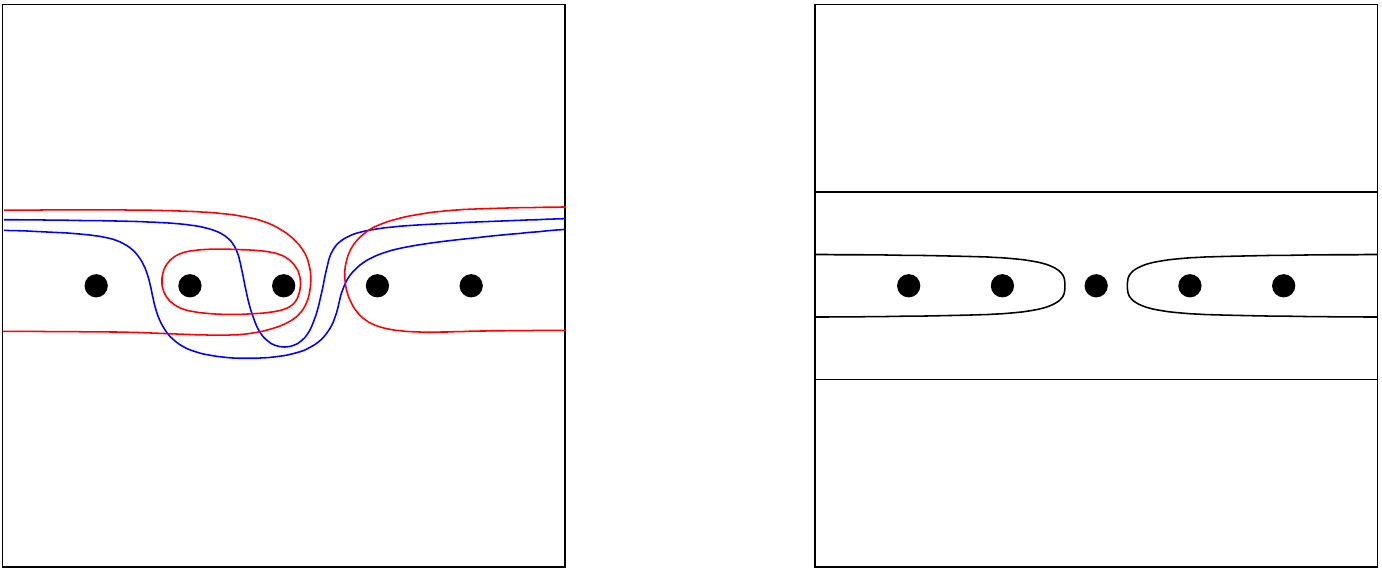} \caption{The curves $\beta_{2 \, 3}^-,\epsilon^{4 \, 3},\beta_3^-,\epsilon^{2 \, 3}$ on the left.  The four-holed sphere filled by $\epsilon^{4 \, 3}$ and $\beta_3^\pm$ on the right.}\label{F:torusfareynb2}
\end{center}
\end{figure}

\begin{lemma} \label{L:quads_satisfying_Lfareydetect}
For each $1 \leq i \leq n$, consider the following four quadruples of curves in $\XX_1$, with indices taken modulo $n$:
\begin{itemize}
\item $\beta_{i-1 \, i}^\pm,\epsilon^{i+1 \, i},\beta_i^\pm,\epsilon^{i-1 \, i}$,
\item $\beta_{i \, i+1}^\pm,\epsilon^{i \, i-1}, \beta_i^\pm,\epsilon^{i \, i+1}$,
\end{itemize}
Each of these satisfies the hypothesis of Lemma~\ref{l:fareydetect}.  Furthermore, the nonzero intersections are all $\XX_1$--detectable.  Consequently, $\epsilon^{i+1 \, i}$ and $\epsilon^{i \, i-1}$ are the unique Farey neighbors of $\beta_i^-$ and $\beta_i^+$.
\end{lemma}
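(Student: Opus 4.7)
The plan is to verify the three hypotheses of Lemma~\ref{l:fareydetect} for each of the four quadruples, establish $\XX_1$-detectability of the nonzero intersections by producing explicit pants decompositions in $\XX_1$, apply Lemma~\ref{l:fareydetect} to conclude the Farey neighbor statements, and then derive uniqueness from the structure of twistable Farey neighbors in a $\xi=1$ subsurface.

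First, I would identify the $\xi=1$ subsurface $N=N(\epsilon^{i+1\,i}\cup\beta_i^{\pm})$ (and similarly $N(\epsilon^{i\,i-1}\cup\beta_i^{\pm})$). The outer curve $\epsilon^{i+1\,i}$ separates $S_{1,n}$ into a disk containing all punctures and a one-holed torus, while $\beta_i^{\pm}=f_i^{\pm 1}(\beta)$ is obtained from the horizontal midline by point-pushing the $i$th puncture around $\delta_i$, so a representative of $\beta_i^{\pm}$ crosses $\epsilon^{i+1\,i}$ in exactly two points in order to loop once around puncture $i$. A regular neighborhood of this union is consequently a four-holed sphere, as illustrated on the right of Figure~\ref{F:torusfareynb2}, verifying condition (1) of Lemma~\ref{l:fareydetect}.

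Next, I would verify condition (2) by direct inspection of the quadruple $\beta_{i-1\,i}^{\pm},\epsilon^{i+1\,i},\beta_i^{\pm},\epsilon^{i-1\,i}$ (and analogously for the other three): the disjointness $i(\beta_{i-1\,i}^{\pm},\beta_i^{\pm})=0$ holds because $\beta_{i-1\,i}^{\pm}=f_{i+1}^{\pm 1}f_i^{\pm 1}(\beta)$ can be realized so that its modifications near punctures $i-1,i$ are disjoint from those of $\beta_i^{\pm}$; we have $i(\beta_{i-1\,i}^{\pm},\epsilon^{i-1\,i})=0$ because $\beta_{i-1\,i}^{\pm}$ is realizable outside the twice-punctured disk bounded by $\epsilon^{i-1\,i}$; and $i(\epsilon^{i+1\,i},\epsilon^{i-1\,i})=0$ because one of these outer curves is contained in the complementary subsurface of the other. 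Consecutive pairs are seen to intersect essentially by similar inspection, and condition (3) then reduces to checking that each of $\beta_{i-1\,i}^{\pm}$ and $\epsilon^{i-1\,i}$ meets exactly one boundary component of the four-holed sphere $N$, which is immediate once the boundary components of $N$ are listed. To establish $\XX_1$-detectability of each nonzero intersection, I would exhibit, for each such pair, two pants decompositions of $S_{1,n}$ inside $\XX_1$ which differ only by interchanging the two intersecting curves, as in Definition~\ref{D:detectable}; these decompositions are built from chain curves $\alpha_j$ with $j\notin\{i-1,i,i+1\}$, outer curves $\epsilon^{j\,k}\in\OO$ enclosing disjoint groups of punctures, and the point-pushed curves $\beta_i^{\pm},\beta_{i-1\,i}^{\pm}$ whose presence is the very reason for enlarging $\XX$ to $\XX_1$.

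Having verified the hypotheses and detectability, Lemma~\ref{l:fareydetect} yields that $\epsilon^{i+1\,i}$ and $\beta_i^{\pm}$ are Farey neighbors in $N$, and the same argument applied to the three remaining quadruples shows that $\epsilon^{i\,i-1}$ is likewise a Farey neighbor of $\beta_i^{\pm}$. For the uniqueness clause, I would observe that $\beta_i^{+}$ and $\beta_i^{-}$ are themselves twistable Farey neighbors in a $\xi=1$ subsurface (the four-holed sphere cobounded by $\epsilon^{i+1\,i}$, $\epsilon^{i\,i-1}$ together with peripheral components around puncture $i$), so Proposition~\ref{P:twistable_Farey_neighbors} implies that they have exactly two common Farey neighbors, which must therefore be $\epsilon^{i+1\,i}$ and $\epsilon^{i\,i-1}$. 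The main obstacle is not conceptual but bookkeeping: one must carefully enumerate the boundary components of each filling $N$ and write down the pants decompositions witnessing detectability, particularly for the pairs involving the newly added curves $\beta_{i-1\,i}^{\pm}$, since those were introduced into $\XX_1$ precisely in order to make such decompositions available.
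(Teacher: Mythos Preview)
Your plan for verifying the three hypotheses of Lemma~\ref{l:fareydetect} and exhibiting pants decompositions for $\XX_1$-detectability follows the paper's approach. The paper likewise proceeds by inspection, explicitly identifying the four-holed sphere $N=N(\epsilon^{i+1\,i}\cup\beta_i^{\pm})=N(\epsilon^{i\,i-1}\cup\beta_i^{\pm})$ whose holes are the $i^{\rm th}$ puncture, the curve $\epsilon^{i+1\,i-1}$, and two parallel copies of $\beta$ (the latter appearing twice since $\beta$ is nonseparating), and then observing that only the $\epsilon^{i+1\,i-1}$ boundary component meets the first and fourth curves of each quadruple.

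Your uniqueness argument, however, contains a genuine error. You claim that $\beta_i^+$ and $\beta_i^-$ are twistable Farey neighbors in a four-holed sphere ``cobounded by $\epsilon^{i+1\,i}$, $\epsilon^{i\,i-1}$ together with peripheral components around puncture $i$.'' No such subsurface exists: $\epsilon^{i+1\,i}$ and $\epsilon^{i\,i-1}$ each bound a one-holed torus containing $\beta$, and as essential curves of the same separation type in the correct $N$ described above they intersect one another in at least four points, so they cannot simultaneously be boundary components of any subsurface. Even setting this aside, $\beta_i^+$ and $\beta_i^-$ are nonseparating in $S$ and do not cut off twice-punctured disks, while $N$ is a four-holed sphere rather than a one-holed torus; hence they are never a \emph{twistable} pair and Proposition~\ref{P:twistable_Farey_neighbors} does not apply. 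The correct route to uniqueness is to work inside the $N$ the paper identifies: all four curves $\epsilon^{i+1\,i},\epsilon^{i\,i-1},\beta_i^+,\beta_i^-$ are essential in this single four-holed sphere, $\beta_i^+$ and $\beta_i^-$ are adjacent in $\C(N)$, and since every edge of the Farey complex lies in exactly two triangles, $\epsilon^{i+1\,i}$ and $\epsilon^{i\,i-1}$ are their only common neighbors there. The paper leaves this last step implicit once $N$ is correctly described.
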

\begin{proof}
The fact that the four quadruples of curves each satisfy the hypothesis of Lemma~\ref{l:fareydetect} is clear by inspection.  See the left side of Figure~\ref{F:torusfareynb2} for the case $\beta_{i-1 \, i}^-,\epsilon^{i+1 \, i},\beta_i^-,\epsilon^{i-1 \, i}$.  The four-holed sphere $N$ filled by the Farey neighbors $\epsilon^{i+1 \, i},\beta_i^\pm$ and $\epsilon^{i \, i-1}, \beta_i^\pm$ has holes corresponding to the $i^{th}$ puncture, and the curves $\beta$ and $\epsilon^{i+1 \, i-1}$; see the right side of Figure~\ref{F:torusfareynb2}.  Only $\epsilon^{i+1 \, i-1}$ intersects $\beta_{i-1 \, i}^\pm, \beta_{i \, i+1}^\pm, \epsilon^{i-1 \, i},\epsilon^{i \, i+1}$ nontrivially, as required for Lemma~\ref{l:fareydetect}.

To see that all the intersections are $\XX_1$--detectable, we need only exhibit the necessary curves in $\XX_1$ determining a pants decomposition of $S - N$.  See Figure~\ref{F:torusdetectfareynb1} for the curves necessary to detect $i(\beta_i^-,\epsilon^{i-1 \, i}) \neq 0$.  We leave the other cases to the reader.
\begin{figure}[htb]
\begin{center}
\includegraphics[width=1.8in]{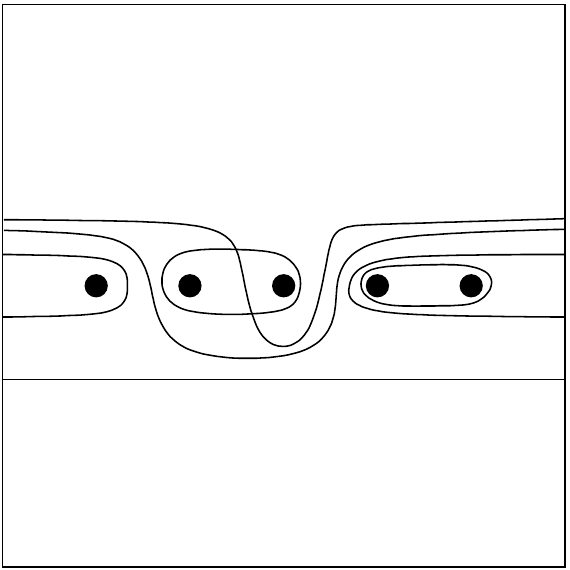} \caption{We use $\{\beta, \beta_{2 \, 3}^-,\epsilon^{4 \, 5},\epsilon^{4 \, 1}\}$ to detect $i(\beta_3^-,\epsilon^{2 \, 3}) \neq 0$.}\label{F:torusdetectfareynb1}
\end{center}
\end{figure}
\end{proof}

We are now in a position to prove Proposition \ref{P:torus_large}:

\begin{proof}[Proof of Proposition \ref{P:torus_large}]
Let $\phi \colon \XX_1 \to \C(S_{1,n})$ be a locally injective simplicial map. By Theorem \ref{T:AL_torus},  there exists $f \in \Mod^\pm(S_{1,n})$ such that $f|_{\XX} = \phi|_{\XX}$, unique up to precomposing with $r$. In fact, after precomposing $\phi$ with $f^{-1}$ we may as well assume that $\phi|_{\XX}$ is the identity.  

According to Lemma~\ref{L:quads_satisfying_Lfareydetect}, for all $i$, $\phi(\epsilon^{i+1 \, i}) = \epsilon^{i+1 \, i}$ and $\phi(\epsilon^{i \, i-1}) = \epsilon^{i \, i-1}$ are the unique Farey neighbors of $\phi(\beta_i^-)$ and $\phi(\beta_i^+)$ (with indices taken modulo $n$).  Consequently, $\phi(\{\beta_i^{\pm}\}) = \{ \beta_i^\pm\}$ for all $i$.  Notice that $i(\beta_i^+,\beta_j^-) = 0$ for all $i,j$, while $i(\beta_i^+,\beta_j^+) = i(\beta_i^-,\beta_j^-) = 2$ for all $i,j$.  It follows that if $\phi(\beta_i^-) = \beta_i^+$ for some $i$, then this is true for all $i$.  Composing with $r$ if necessary, we deduce that $\phi(\beta_i^\pm) = \beta_i^\pm$ for all $i$.  All that remains is to see that $\phi(\beta_{i \, i+1}^\pm) = \beta_{i \, i+1}^\pm$ for all $i$.

To prove this we need only show that $\beta_{i \, i+1}^\pm \in (\XX \cup \{\beta_j^\pm \mid 1 \leq j \leq n \})'$, and then we can apply Proposition~\ref{p:prime}.  First note that when $n = 3$, then $\beta_{i \, i+1}^\pm = \beta_{i+2}^\mp$, so there is nothing to prove in this case.  In general, one readily checks that $\beta_{i \, i+1}^+$ is uniquely determined by the almost filling set
\[ \{ \beta,\beta_1^-,\beta_2^-,\ldots,\beta_n^- \} \setminus \{ \beta_i^-,\beta_{i+1}^- \}.\]
This completes the proof.
\end{proof}

Let $\OO_P = \{\epsilon^{i \, i+1}\}_{i=1}^n$, counting indices modulo $n$.  For $n \geq 5$, this is a closed string of twistable Farey neighbors in $\XX_1$, and we could appeal to Proposition~\ref{P:closed_string} to add the half-twists about curves of $\OO_P$ about curves in $\OO_P$ in this case.  However, we can provide a single argument for all $n \geq 3$.

\begin{lemma} \label{L:half_twist_tori}
For all $\epsilon,\epsilon' \in \OO_P$, $H_{\epsilon}^{\pm 1}(\epsilon') \in \XX'$.  Consequently, $H_{\epsilon}^{\pm 1}(\XX_1') \cup \XX_1'$ is rigid.
\end{lemma}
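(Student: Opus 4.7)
The plan is to address the two claims in sequence. For the first claim, that $H_\epsilon^{\pm 1}(\epsilon') \in \XX'$ for all $\epsilon, \epsilon' \in \OO_P$, we argue by cases on the position of $\epsilon$ relative to $\epsilon'$. If $\epsilon = \epsilon'$ or if $\epsilon$ and $\epsilon'$ are disjoint, then $H_\epsilon^{\pm 1}(\epsilon') = \epsilon' \in \XX \subset \XX'$, so the claim is automatic. The only substantive case is when $\epsilon = \epsilon^{i \, i+1}$ and $\epsilon' = \epsilon^{i+1 \, i+2}$ (up to cyclic relabeling), where they share the single puncture indexed $i+1$ and intersect essentially. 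Here we exhibit an explicit almost filling set $A \subset \XX$ that uniquely determines $H_\epsilon^{\pm 1}(\epsilon')$: one selects chain curves and outer curves whose union fills the complement of a regular neighborhood of $H_\epsilon^{\pm 1}(\epsilon')$, leaving this curve as the unique essential disjoint curve. This is a concrete geometric construction of the same flavor as those carried out in Section~\ref{S:punctured spheres} and the higher-genus section, implemented via a pair of figures.

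For the second claim, we apply Lemma~\ref{L:glue} to $Y_1 = \XX_1'$ and $Y_2 = H_\epsilon^{\pm 1}(\XX_1')$. Both are rigid: $Y_1$ by Proposition~\ref{p:prime} applied to Proposition~\ref{P:torus_large}, and $Y_2$ because $H_\epsilon^{\pm 1}$ is a mapping class. It then suffices to verify that $Y_1 \cap Y_2$ is weakly rigid. Writing $\epsilon = \epsilon^{j \, j+1}$, the intersection contains every curve in $\XX_1'$ disjoint from $\epsilon$ (these being fixed by $H_\epsilon^{\pm 1}$), and also contains $H_\epsilon^{\pm 1}(\epsilon'')$ for every $\epsilon'' \in \OO_P$ by the first claim. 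Among the curves in $\XX_1'$ disjoint from $\epsilon$ are the chain curves $\alpha_i$ with $i \neq j+1$, the curve $\beta$, and the point-push curves $\beta_i^{\pm}$ for $i \notin \{j, j+1\}$ (which exist since $n \geq 3$). The nontrivial element $r$ of the pointwise stabilizer of $\XX$ interchanges $\beta_i^{+}$ with $\beta_i^{-}$, so the presence of some $\beta_i^{+}$ in the intersection rules out $r$. Together with the remaining curves of $\XX$ surviving in the intersection, this shows that the pointwise stabilizer of $Y_1 \cap Y_2$ is trivial.

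The main obstacle is the first claim in the adjacent case, specifically exhibiting the almost filling subset of $\XX$ that uniquely determines $H_\epsilon^{\pm 1}(\epsilon')$. Once this geometric step is carried out, the remainder of the lemma follows formally from Lemma~\ref{L:glue} together with the weak rigidity verification above.
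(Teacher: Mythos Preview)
Your treatment of the second claim is essentially the paper's: both arguments apply Lemma~\ref{L:glue} after verifying that $\XX_1' \cap H_\epsilon^{\pm 1}(\XX_1')$ contains a weakly rigid set. The paper names $\OO_P \cup \{\beta_{i+2}^+\}$; you name a slightly different but equally adequate collection, and your observation that a surviving $\beta_i^+$ rules out $r$ is exactly the right point.

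The genuine gap is in the first claim. You propose to exhibit an almost filling set $A \subset \XX$ uniquely determining $H_\epsilon(\epsilon')$, but no such set can exist. The involution $r$ fixes every curve of $\XX$, and since $r$ is orientation-reversing it conjugates $H_\epsilon$ to $H_\epsilon^{-1}$; hence $r$ interchanges the two curves $H_\epsilon(\epsilon')$ and $H_\epsilon^{-1}(\epsilon')$. These are distinct, because $H_\epsilon^2 = T_\epsilon$ moves the curve $\epsilon'$ that it intersects. Consequently any subset of $\XX$ disjoint from one of these curves is also disjoint from the other, so it determines at least two curves and never a unique one. The ``concrete geometric construction'' you defer to figures therefore cannot be completed as stated.

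The paper's remedy is to take the almost filling set
\[
\{\beta_{i+1}^+\} \cup \bigl(\{\alpha_1,\ldots,\alpha_n\} \setminus \{\alpha_{i+1},\alpha_{i+2}\}\bigr),
\]
where the curve $\beta_{i+1}^+ \in \XX_1 \setminus \XX$ is precisely what breaks the $r$-symmetry (recall $r(\beta_{i+1}^+) = \beta_{i+1}^-$). What is actually established is thus $H_\epsilon^{\pm 1}(\epsilon') \in \XX_1'$ rather than $\XX'$; the ``$\XX'$'' in the lemma statement is a typo for ``$\XX_1'$'', harmless for the sequel since only membership in $\XX_1'$ is ever used.
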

\begin{proof}
We start with the proof of the first statement.  If $i(\epsilon,\epsilon') = 0$, then there is nothing to prove.  Otherwise, up to a homeomorphism we may assume that $\epsilon = \epsilon^{i \, i+1}$ and $\epsilon' = \epsilon^{i+1 \, i+2}$.  Then we note that $H_{\epsilon^{i \, i+1}}(\epsilon^{i+1 \, i+2})$ is the curve uniquely determined by the almost filling set of curves
\[ \{ \beta_{i+1}^+ \} \cup \{\alpha_1,\ldots,\alpha_n \} \setminus \{ \alpha_{i+1},\alpha_{i+2} \},\]
completing the proof of the first statement.

For the second statement, we note that $H_{\epsilon^{i \, i+1}}(\XX_1') \cap \XX_1'$ contains the weakly rigid set $\OO_P \cup \{\beta_{i+2}^+ \}$, for example.  Therefore, since $\XX_1$ is rigid by Proposition~\ref{P:torus_large}, so is $\XX_1'$ by Proposition~\ref{p:prime}, and hence by Lemma~\ref{L:glue} it follows that $H_{\epsilon^{i \, i+1}}(\XX_1') \cup \XX_1'$ is rigid, as required.  A similar argument proves the statement for $H_{\epsilon^{i \, i+1}}^{-1}$.
\end{proof}

We also need to consider Dehn twists in $\alpha_i$ and $\beta$.  To deal with these, we first define $\XX_2 = \XX_1' \cup H_{\OO_P}(\XX_1')$, where $H_{\OO_P}(\XX_1')$ is the union of $H_{\epsilon}^{\pm 1}(\XX_1')$ over all $\epsilon \in \OO_P$.  By Lemma~\ref{L:half_twist_tori}, $\XX_2$ is rigid.

\begin{lemma} 
For all $i = 1,\ldots,n$, we have $T_{\alpha_i}^{\pm 1}(\beta) = T_{\beta}^{\mp 1}(\alpha_i) \in \XX_1^2 \subset \XX_2^2$.  Consequently, $T_{\alpha_i}^{\pm 1}(\XX_2^2) \cup \XX_2^2$ and $T_\beta^{\pm 1}(\XX_2^2) \cup \XX_2^2$ is rigid.
\end{lemma}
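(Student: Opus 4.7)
The plan has three steps.

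The \emph{identity} $T_{\alpha_i}^{\pm 1}(\beta) = T_\beta^{\mp 1}(\alpha_i)$ follows at once from Proposition~\ref{P:twistable_Farey_neighbors}: since $\alpha_i$ and $\beta$ meet once and fill the one-holed torus $N(\alpha_i \cup \beta) \subset S_{1,n}$, they form a toroidal twistable pair of Farey neighbors with twisting pair $(T_{\alpha_i}, T_\beta)$, and these two curves are moreover the only common Farey neighbors of $\alpha_i$ and $\beta$.

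The \emph{membership} in $\XX_1^2$ would be shown exactly as in Lemma~\ref{L:determine_chain}, via explicit almost filling sets drawn in one or two figures.  By the cyclic symmetry of the construction only a single $i$ need be treated, and since the reflection $r$ preserves each of $\XX_1, \XX_1', \XX_1^2$ setwise and conjugates $T_{\alpha_i}$ to $T_{\alpha_i}^{-1}$ (being orientation-reversing and fixing $\alpha_i$), the identity $T_{\alpha_i}^{-1}(\beta) = r(T_{\alpha_i}(\beta))$ reduces the task to placing $T_{\alpha_i}(\beta)$ alone in $\XX_1^2$.  Concretely, I would produce, via one or two figures, first an almost filling subset of $\XX_1$---combining chain and outer curves of $\XX$ with a suitable $\beta_j^\pm$ or $\beta_{j,j+1}^\pm$ that ``reaches into'' $N(\alpha_i \cup \beta)$---uniquely determining an auxiliary curve $\gamma \in \XX_1'$, and then an almost filling subset of $\XX_1'$ containing $\gamma$ that uniquely determines the $(1,1)$-slope curve $T_{\alpha_i}(\beta)$.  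Producing these explicit figures is the main obstacle; once they are drawn, the uniqueness verifications are mechanical, exactly as in Lemma~\ref{L:determine_chain}.

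The \emph{rigidity} consequence follows by invoking Lemma~\ref{L:glue} with $Y_1 = \XX_2^2$ (rigid by Lemma~\ref{L:half_twist_tori} and Proposition~\ref{p:prime}) and $Y_2 = T_{\alpha_i}^{\pm 1}(\XX_2^2)$ (rigid as the image of a rigid set under a mapping class).  Thanks to the identity $T_{\alpha_i}^{\mp 1}(T_{\alpha_i}^{\pm 1}(\beta)) = \beta$ together with $T_{\alpha_i}^{\pm 1}(\beta) \in \XX_1^2 \subset \XX_2^2$, the intersection $I = Y_1 \cap Y_2$ contains $\alpha_i$, $\beta$, $T_{\alpha_i}^{\pm 1}(\beta)$, and every curve of $\XX_2^2$ disjoint from $\alpha_i$; in particular $I \supset \CC \cup \{T_{\alpha_i}^{\pm 1}(\beta)\}$.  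A direct verification---by the argument of Proposition~\ref{P:torus_large} giving triviality of the pointwise stabilizer of a set of this form, combined with the fact that $r(T_{\alpha_i}^{\pm 1}(\beta)) = T_{\alpha_i}^{\mp 1}(\beta) \neq T_{\alpha_i}^{\pm 1}(\beta)$---shows that $I$ has trivial pointwise stabilizer in $\Mod^\pm(S_{1,n})$ and is therefore weakly rigid.  Lemma~\ref{L:glue} then delivers the rigidity of $T_{\alpha_i}^{\pm 1}(\XX_2^2) \cup \XX_2^2$.  The argument for $T_\beta^{\pm 1}(\XX_2^2) \cup \XX_2^2$ is entirely parallel; here $I \supset \XX$ because every outer curve can be drawn disjoint from $\beta$, and each $\alpha_j$ enters $I$ via the identity $T_\beta^{\mp 1}(\alpha_j) = T_{\alpha_j}^{\pm 1}(\beta) \in \XX_1^2$, so Theorem~\ref{T:AL_torus} directly bounds the pointwise stabilizer by $\{1,r\}$ and $r$ is again excluded by the presence of $T_\beta^{\pm 1}(\alpha_j)$.
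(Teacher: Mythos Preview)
Your proposal is correct and follows essentially the same three-step architecture as the paper: the identity from Proposition~\ref{P:twistable_Farey_neighbors}, the membership in $\XX_1^2$ via a two-stage figure argument (the paper draws Figure~\ref{F:torustwist} to exhibit exactly the almost filling sets you describe), and the rigidity consequence via Lemma~\ref{L:glue}.

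The one substantive difference is your choice of witness for weak rigidity of the intersection.  The paper uses $\CC \cup \{H_{\epsilon^{i-1\,i}}(\alpha_{i-1})\}$, pulling the extra curve from $\XX_2$; you instead use $\CC \cup \{T_{\alpha_i}^{\pm 1}(\beta)\}$, recycling the curve whose membership you have just established.  Your choice is self-contained and arguably cleaner, since it does not reach back into the half-twist enlargement.  Both arguments rest on the (easy, but unproved in either version) fact that the pointwise stabilizer of $\CC$ in $\Mod^\pm(S_{1,n})$ is $\langle r\rangle$; you should not cite Proposition~\ref{P:torus_large} for this, since that proposition concerns $\XX_1 \supsetneq \XX$ and its proof starts from the stabilizer of $\XX$, not of $\CC$.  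A one-line direct verification (the complementary pieces of $\CC$ are once-punctured disks, leaving only the reflection) would close this.  Your reduction of the two signs to one via $r$, and your more explicit treatment of the $T_\beta$ case, are both improvements over the paper's ``the other cases follow similarly.''
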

\begin{proof}
As in previous arguments,  we exhibit a series of pictures that will yield the desired result; see Figure \ref{F:torustwist}.  It is straightforward to modify such pictures to treat the case of an arbitrary $n\ge 3$.  The top left picture shows an almost filling set in $\XX_1$ that uniquely determines a curve in $\XX_1'$ on the top right.  Then the lower left is an almost filling set in $\XX_1'$ that uniquely determines the curve in $(\XX_1')' = \XX_1^2$.  This curve is precisely $T_\beta(\alpha_i) = T_{\alpha_i}^{-1}(\beta)$.  Similarly, $T_{\alpha_i}(\beta) = T_\beta^{-1}(\alpha_i) \in \XX_1^2$.

Finally, we easily observe that $\XX_2^2 \cap T_{\alpha_i}(\XX_2^2)$ is weakly rigid, as it contains $\CC \cup H_{\epsilon^{i-1 \, i}}(\alpha_{i-1})$, which is weakly rigid.  Appealing to Lemma~\ref{L:glue}, it follows that $\XX_2^2 \cup T_{\alpha_i}(\XX_2^2)$ is rigid.  The other cases follow similarly.
\end{proof}

\begin{figure}[htb]
\begin{center}
\includegraphics[width=4in]{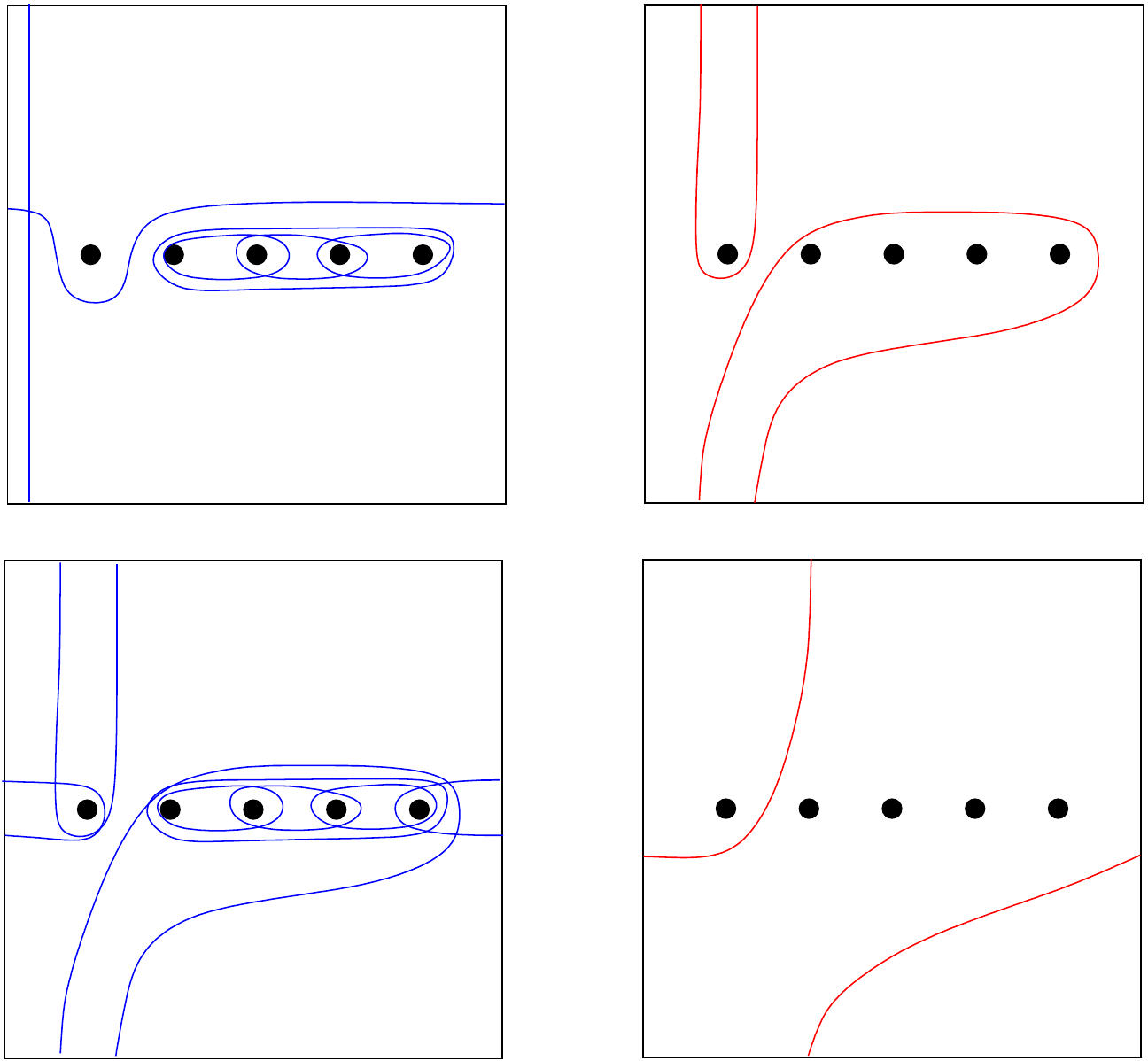} \caption{Illustrating $T_\beta(\alpha_2) \in \XX_2^2$ on $S_{1,5}$. }\label{F:torustwist}
\end{center}
\end{figure}

Finally, we prove our main result for surfaces of genus 1.

\begin{proof}[Proof of Theorem \ref{main} for $S=S_{1,n}$]
Since $\XX_2$ is rigid, by Propositions \ref{p:prime}, the set $Y=\XX_2^2$ is rigid.  
Moreover, $\Mod(S_{1,n}) \cdot Y = \C(S_{1,n})$, by inspection.  A generating set for $\Mod(S_{1,n})$ is given by the Dehn twists $f_\alpha$ about the elements $\alpha \in \CC$ and the half-twists $f_\epsilon$ about the elements $\epsilon \in A= \{\epsilon^{i \, i+1}\}$ (see Section 4.4 of \cite{Farb-Margalit}, for instance). Let $G= \CC \cup A$ and note that, for each $\alpha \in G$, the set $Y \cup f_\alpha Y$ is rigid.  Therefore, we may apply Proposition \ref{P:finish} to obtain the desired exhaustion of $\C(S_{1,n})$ by finite rigid sets. 
\end{proof}


\begin{thebibliography}{99}

\bibitem{AL} J. Aramayona, C. J. Leininger, Finite rigid sets in curve complexes, {\em J. Topol. Anal.} 5 (2013). 

\bibitem{AS} J. Aramayona, J. Souto, A remark on homomorphisms from right angled Artin groups to mapping class groups. C. R. Acad. Sci. Paris 351 (2013).


\bibitem{BBM} J. S. Birman, N. Broaddus, W. W. Menasco, Finite rigid sets and homologically non-trivial spheres in the curve complex of a surface. To appear in {\em J. Topol. Anal.} . 

\bibitem{Broaddus} N. Broaddus,  Homology of the curve complex and the Steinberg
module of the mapping class group, {\em Duke Math. Journal.} 161 (2012)

\bibitem{Farb-Margalit} B. Farb, D. Margalit, {\em A primer on mapping class groups}, Princeton University Press. 

\bibitem{Harer} J. Harer, {\em The virtual cohomological dimension of the mapping class group of an orientable surface}, Invent. Math.



\bibitem{Ivanov} N. V. Ivanov, Automorphism of complexes of curves and of Teichm\"uller spaces.  {\em{Internat. Math. Res. Notices}},  {14} (1997), 651--666.

\bibitem{KK1} S. Kim, T. Koberda, Embedability between right-angled Artin groups. {\em Geom. Topol.} 17 (2013). 

\bibitem{KK2} S. Kim, T. Koberda, Right-angled Artin groups and finite subgraphs of curve complexes. Preprint, {\url {arXiv:1310.4850}}. 

\bibitem{Korkmaz} M. Korkmaz, Automorphisms of complexes of curves on punctured spheres and on punctured tori.  {\em{Topology Appl.}},  {95}  (1999),  no. 2, 85--111.




\bibitem{Luo} F. Luo, Automorphisms of the complex of curves.  {\em{Topology}},  {39}  (2000),  no. 2, 283--298.




\bibitem{Shackleton} K. J. Shackleton, Combinatorial rigidity in curve complexes and mapping class groups, {\em{Pacific Journal of Mathematics}}, {{230}}, No. 1, 2007





\end{thebibliography}
\end{document}